\theoremstyle{plain}
\newtheorem{theorem}{\normalfont\sffamily\bfseries\normalsize Theorem}[section]
\newtheorem{lemma}[theorem]{\normalfont\sffamily\bfseries\normalsize Lemma}
\newtheorem{proposition}[theorem]{\normalfont\sffamily\bfseries\normalsize Proposition}
\newtheorem{corollary}[theorem]{\normalfont\sffamily\bfseries\normalsize Corollary}
\theoremstyle{definition}
\newtheorem{definition}[theorem]{\normalfont\sffamily\bfseries\normalsize Definition}
\newtheorem{example}[theorem]{\normalfont\sffamily\bfseries\normalsize Example}
\theoremstyle{remark}
\newtheorem{remark}[theorem]{\normalfont\sffamily\bfseries\normalsize Remark}
\newcommand{\Le}{\leqslant}
\newcommand{\Ge}{\geqslant}
\DeclareMathOperator{\Z}{\mathbb Z}
\DeclareMathOperator{\trdeg}{\mathrm tr.deg.}
\DeclareMathOperator{\Max}{\mathrm Max}
\DeclareMathOperator{\PSpec}{\mathrm PSpec}
\DeclareMathOperator{\ia}{\mathfrak a}
\DeclareMathOperator{\kk}{\mathds k}
\DeclareMathOperator{\kkk}{\mathbf k}
\DeclareMathOperator{\ib}{\mathfrak b}
\DeclareMathOperator{\p}{\mathfrak p}
\DeclareMathOperator{\q}{\mathfrak q}
\DeclareMathOperator{\m}{\mathfrak m}
\DeclareMathOperator{\Qt}{\mathrm Qt}
\DeclareMathOperator{\Id}{\mathrm Id}
\DeclareMathOperator{\V}{\mathbb V}
\DeclareMathOperator{\I}{\mathbb I}
\DeclareMathOperator{\Q}{\mathbb Q}
\DeclareMathOperator{\C}{\mathbb C}
\DeclareMathOperator{\Mn}{\bf M}
\DeclareMathOperator{\F}{\mathcal F}
\DeclareMathOperator{\Grp}{\mathcal G}
\DeclareMathOperator{\Orb}{Orb}
\definecolor{todo}{rgb}{1,0,0}
\definecolor{answer}{rgb}{0,0,1}
\definecolor{new}{rgb}{1,0,1}
\definecolor{conditional}{rgb}{0,1,0}
\definecolor{e-mail}{rgb}{0,.40,.80}
\definecolor{reference}{rgb}{.20,.60,.22}
\definecolor{mrnumber}{rgb}{.80,.40,0}
\definecolor{citation}{rgb}{0,.40,.80}
\DeclareMathAlphabet{\mathpzc}{OT1}{pzc}{m}{it}
\DeclareMathOperator{\Aut}{Aut}
\DeclareMathOperator{\GL}{GL}
\DeclareMathOperator{\Spec}{Spec}
\DeclareMathOperator{\id}{id}
\newcommand{\G}{\mathbf{G}}
\newcommand{\iso}{\cong}
\newcommand{\ZZ}{\mathds{Z}}
\newcommand{\Gm}{\mathds{G}_{m}}
\providecommand{\keywords}[1]{\footnotesize \textbf{\textit{Key Words:}} #1\normalsize}
\providecommand{\msc}[1]{\footnotesize \textbf{\textit{2010 Mathematics Subject Classification:}} #1\normalsize}
\title{\bfseries\sffamily\large GALOIS THEORY OF DIFFERENCE EQUATIONS WITH PERIODIC PARAMETERS\footnote{Address correspondence to Prof. Alexey Ovchinnikov, Department of Mathematics, CUNY Queens College, 65-30 Kissena Blvd., Queens, NY 11367, USA; E-mail: aovchinnikov@qc.cuny.edu}}
\author{\normalsize\bf Benjamin Antieau, Alexey Ovchinnikov,  and Dmitry Trushin \vspace{0.05in}  \\\it \small
 Department of Mathematics, University of California at Los Angeles, \\  \small
 \it  Los Angeles, California, USA \vspace{0.05in} \\ \it \small
Department of Mathematics, CUNY Queens College, 
Queens, New York, USA \\ \it \small
Department of Mathematics, CUNY Graduate Center, 
New York, New York, USA \vspace{0.05in}\\ \it \small
Einstein Institute of Mathematics,
The Hebrew University of Jerusalem,\\ \it\small
Jerusalem, Israel
\vspace{-0.2in}}
\date{}
\begin{document}
\maketitle

\begin{abstract}\footnotesize\textit {\textbf  {We develop a Galois theory for systems of linear difference equations with periodic parameters,
    for which we also introduce linear difference algebraic groups.
    We apply this to constructively test if solutions of linear $q$-difference equations, with $q \in \C^\ast$ and $q$ not a root of unity,
    satisfy any polynomial
    $\zeta$-difference equations with $\zeta^t=1$, $t \Ge 1$.}}
\end{abstract}
\smallskip

\keywords{Difference Galois theory; $q$-Difference equations; Difference pseudofields.}

\medskip
\msc{Primary 12H10; Secondary 13N99, 20H25, 39A13.}
\bigskip
\setlength\parindent{0.7cm}
\section{Introduction} 
In this paper, we give a new Galois theory of systems of linear difference equations with periodic (of finite order) difference parameters. This appears
to be the first time that equations with difference parameters have been treated in the literature. In this theory,
the Galois groups are linear difference algebraic groups, and they measure difference algebraic dependence of solutions of difference equations.
Our Galois theory and Galois correspondence works over fields of any characteristic. In characteristic $p$, our Galois correspondence is presented here for separable extensions of difference
pseudofields\footnote{Our proofs apply to the non-separable case by using non-reduced Hopf algebras, as pointed out to us by Michael Wibmer.}.
Among numerous potential applications of our approach, we show how this can be applied to studying properties of solutions of $q$-difference equations.

For the purposes of this introduction, we briefly describe the setup and motivation in the following simple case.
Let $q \in \C\setminus \{0\}$. A $q$-difference equation of order $n$
is an equation in $f$ of the form
\begin{equation}\label{eq:nthqdiff}
f{\left(q^nz\right)} + a_{n-1}(z)\cdot f{\left(q^{n-1}z\right)}+\ldots+ a_0(z)\cdot f(z) = 0,
\end{equation}
where $a_0(z),\ldots,a_{n-1}(z) \in \C(z)$ are given. Over $\C(z)$, a solution to such an equation will not exist in general. However, there is a ring extension of $\C(z)$, called
the Picard--Vessiot ring for the equation that is universal for the property of having a full set of solutions to the equation. Let $f(z)$ be the solution in the Picard--Vessiot ring.
When, in addition, we fix a primitive $t$th root of unity, $\zeta$, and we let $\ZZ/(t)$ act on $\C(z)$ by $f(z)\mapsto f(\zeta t)$,
we show in this paper that we can construct a Picard--Vessiot extension and an action of $\ZZ/(t)$ on the ring extending
the action of $\zeta$ on $\C(z)$. The motivation for developing a Galois theory of difference equations with periodic parameters is to study the algebraic relations satisfied
by $$f(z),f(\zeta z),\ldots,f{\left(\zeta^{t-1}z\right)}.$$

In particular, as an application of the method of our
difference Galois groups with parameters,  Theorem~\ref{thm:6} gives an explicit,
complete description of all first-order $q$-difference equations
\begin{equation}\label{eq:diffeqintro}
    f\left(qz\right)=a(z)f(z)
\end{equation}
with rational coefficients whose solutions are $\zeta$-difference algebraically independent over the rational functions in variable
$z$ with coefficients belonging to the field $\kk$ of $q$-invariant meromorphic functions on $\C\setminus\{0\}$.
This description is easy to use: the inputs are simple functions in the multiplicities of the zeros and poles of $a(z)$.
Our proof requires a similar approach to that of \cite[Section~3]{CharlotteComp},
but it is substantially modified to take into account difference algebraic independence and make the result as explicit as possible.
As an example of our methods, we include a deduction of some algebraic independence properties of theta functions (see Theorem~\ref{thm:Benrelation}).

The approach of this paper resembles the Galois theory of difference equations with differential parameters studied
in~\cite{CharlotteArxiv,CharlotteComp,hardouin_differential_2008,CharlotteLucia11,CharlotteLucia12,CharlotteLucia13,CharlotteLuciaDescent,HDV,LuciaSMF},
where algebraic methods have been developed to test whether solutions of difference equations satisfy polynomial differential equations (see also \cite{Moshe2012} for a general Tannakian approach). In particular,
these methods can be used to prove H\"older's theorem which says that the $\Gamma$-function, which satisfies the difference equation $\Gamma(x+1)=x\cdot\Gamma(x)$,
satisfies no non-trivial differential equation over $\C(x)$.
However, when treating difference equations with differential parameters, one may use fields as the rings of constants.
This is not available when using difference parameters, as Example~\ref{ex:needtouserings} and \cite[Proposition~7.3]{Rosen} show.
The constants in our theory are rings that have zero-divisors, and this fact requires numerous additional subtleties into our approach. The key idea is to find a suitable notion of a difference closed ring.
We use the difference-closed pseudofields of \cite{Dima}, which we review in Section~\ref{sec:defs}. Another approach to the question of difference algebraic closure is in~\cite{Lando}, where difference
versions of valuation rings are given. However, since we require zero-divisors, Lando's approach is insufficient.

Picard--Vessiot extensions with zero divisors for systems of linear difference equation have been considered in~\cite{van_der_put_galois_1997,chatzidakis_definitions_2008,Morikawa}
with a non-linear generalization considered in \cite{Granier}. Also, Galois theories of linear difference
equations, without parameters, when the ground ring has zero divisors have been studied in
\cite{Andre,HopfGalois,Amano,Amano2,Wibmer}, where including zero divisors into the ground ring is needed and provides a much more transparent Galois correspondence.
In all the mentioned cases, the ground ring must be a finite product of fields (called Noetherian difference pseudofields).

Our approach allows us not only to treat parameters, but also prepares a foundation for studying the non-Noetherian case as we base our methods on a natural geometric approach to difference
varieties developed in \cite{Dima}, which has been further generalized to the non-Noetherian case in~\cite{DimaInfinite}.
However, to extend the theory to infinite parameter groups, it is necessary to treat the non-Noetherian case, as the same construction results in a non-Noetherian ring.
This generalization has been carried out in~\cite{OvWib} using the methods of~\cite{Wibmer,Wibmer2} and the present paper (see also \cite{DiVizioHardouinWibmer:DifferenceGaloisofDifferential} for the Galois theory of linear differential equations with difference parameters).

Some of our results can be treated in another way, via the method of faithfully flat descent
from algebraic geometry \cite{MilneEtale}. However, our theory gives a more flexible theory than that obtained
via descent, as explained in Section~\ref{sec:nonfaithful}.

The paper is organized as follows. We give basic definitions in Section~\ref{sec:basicdef}. The main properties of difference pseudofields are detailed in
Sections~\ref{sec:pseudofields} and~\ref{sec:pseudofieldsN}.
Section~\ref{sec:PVTheory} contains the development of our main technique, difference Galois theory (also called difference Picard--Vessiot theory) with periodic parameters.
Difference algebraic groups are introduced and studied in Section~\ref{sec:diffalggroups}.
We finish by showing in Section~\ref{sec:Applications} how to use our theory to study periodic difference algebraic dependencies among solutions of difference equations. In particular, we apply these results to study Jacobi's theta-function in Section~\ref{sec:Jacobi} and to give a complete characterization to all first-order $q$-difference equations with $\zeta$-difference algebraically independent solutions over rational functions in variable
$z$ with coefficients belonging to the field of $q$-invariant
meromorphic functions on $\C\setminus\{0\}$ in Section~\ref{sec:generalqdifference}.

\section{Basic definitions}\label{sec:defs}

\subsection{Difference rings}\label{sec:basicdef} Most of the basic notions on difference algebra can be found in~\cite{Cohn,levin_difference_2008}. Below, we will introduce those that we use here.
Let
$$
\Sigma_0 = \Z,\quad \Sigma_1 = \Z/t_1\Z\oplus\ldots\oplus\Z/t_s\Z,\quad \text{and}\quad \Sigma = \Sigma_0\oplus\Sigma_1,
$$
where each $t_i\Ge 2$. Let $\sigma$ be a generator of $\Sigma_0$ and
$\rho_i$, $1\Le i \Le s$, generate each component of $\Sigma_1$.

A ring $R$ equipped with an action of a fixed subgroup $\Sigma'\subset\Sigma$ by automorphisms is called a $\Sigma'$-ring.

\begin{example}
Let $R = \C(x)$ and $\sigma(x) = px$, $\rho(x) = qx$ with $p,q \in \C^\ast$, $|p|\ne 1$ and $q$ a primitive $m$-th root of unity for some $m\Ge 2$.
Then $\Sigma_0 = \left\{\sigma^n\:|\: n\in\ZZ\right\}$ and $\Sigma_1 = \left\{\id,\rho,\ldots,\rho^{m-1}\right\}$.
\end{example}

Let $R$ be a $\Sigma'$-ring and let
$
R\left[\Sigma'\right] = \left\{\sum r_\tau\tau\:\big|\: r_\tau \in R,\ \tau \in \Sigma'\right\}
$
denote the ring of difference operators on $R$. The multiplication on $R[\Sigma']$ is given by
$
\tau\cdot r = \tau(r)\tau$.
For a set $Y$,  let
$$
R\{Y\}_{\Sigma'} = R\left[\ldots,\tau y,\ldots\:|\: \tau \in \Sigma',\ y\in Y\right]
$$
denote the ring of $\Sigma'$-polynomials over $R$ with $Y$ as the set of $\Sigma'$-indeterminates.

\begin{example}
For example, if $\Sigma' = \Sigma_1 = \Z/2\Z$ and $\rho$ is a
generator of $\Sigma_1$, then $R\{y\}_{\Sigma'} = R[y,\rho y]$ with
the action of $\rho$ given by $\rho(y) = \rho y$ and $\rho(\rho y) =
y$.
\end{example}

An ideal $\ia \subset R$ is called a $\Sigma'$-ideal if $\Sigma'(\ia)\subset\ia$, where $$\Sigma'(\ia) := \left\{\sigma(a)\:|\:\sigma\in\Sigma', a\in\ia\right\}.$$
The smallest $\Sigma'$-ideal containing a set $F \subset R$ is denoted by $[F]_{\Sigma'}$. If $\Sigma' = \Sigma$, then it is also denoted simply by $[F]$.
 Let $R_1$ and $R_2$ be $\Sigma'$-rings. A ring homomorphism $f : R_1 \to R_2$ is called a $\Sigma'$-homomorphism if
$
f(\tau(r))=\tau(f(r))$, for all $\tau \in \Sigma',$ $r \in R_1$.

The following example shows that even if we start with a base field, the constants of the solution space as constructed in Section~\ref{sec:PVTheory}
have zero divisors.
\begin{example}\label{ex:needtouserings}
Let $\Sigma_1=\Z/ 4\Z$ with a
generator $\rho$. Consider the equation
$\sigma x = - x$.
The procedure of constructing a solution space (called Picard--Vessiot extension) of the above equation described in Section~\ref{sec:PVTheory}
first takes $\C\{x,1/x\}_{\rho}$, with $\sigma x = -
x$, and then quotients by  $\left[\rho x-ix, x^4-1\right]$, which is a maximal
$\Sigma$-ideal. Thus, we arrive at the ring
$\C[x]\big/{\left(x^4-1\right)}$, $\sigma x = -x$ and $\rho x = i
x$, which is a $\Sigma$-pseudofield generated by the solution of
the equation. The subring of constants is generated by $x^2$ and is
isomorphic to $\C[t]\big/{\left(t^2-1\right)}$, which is not a field.
\end{example}

Denote the ring of $\Sigma'$-constants of $R$ by $R^{\Sigma'}$. In other words, $$
R^{\Sigma'} = \left\{r \in R\:|\: \tau(r)=r\ \text{for all}\ \tau \in \Sigma'\right\}.$$
The set of all $\Sigma'$-ideals of $R$ will be denoted by
$
\Id^{\Sigma'}(R)$.

\begin{definition} A $\Sigma'$-ideal $\p$ of $R$ is
called pseudoprime if there exists a multiplicatively closed subset
$S \subset R$ such that $\p$ is a maximal $\Sigma$-ideal with
$\p\cap S =\varnothing$.
\end{definition}

\begin{lemma}\label{lem:preimage}Let $A$ and $B$ be $\Sigma$-rings and $\varphi: A\to B$ be a
$\Sigma$-homomorphism. Then for any pseudoprime ideal $\q$ in $B$ the ideal $\varphi^{-1}(\q)$ is pseudoprime.
\end{lemma}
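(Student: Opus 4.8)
The plan is to sidestep the multiplicative-set formulation in favour of an intrinsic description of pseudoprime ideals, from which the lemma becomes a formal consequence of functoriality of contraction. The key claim is that a $\Sigma$-ideal of a $\Sigma$-ring is pseudoprime if and only if it is the largest $\Sigma$-ideal contained in some ordinary prime ideal; concretely,
\[
\q\subset B\ \text{is pseudoprime}\iff \q=\bigcap_{\tau\in\Sigma}\tau(\mathfrak P)\ \text{for some prime }\mathfrak P\subset B .
\]

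Establishing this characterization is the first, and main, step. For the forward implication, suppose $\q$ is a maximal $\Sigma$-ideal disjoint from a multiplicatively closed set $S$. By Zorn's lemma pick an ideal $\mathfrak P\supseteq\q$ maximal among ordinary ideals disjoint from $S$; such an ideal is prime, by the classical fact that an ideal maximal with respect to avoiding a multiplicative set is prime. The intersection $\ib:=\bigcap_{\tau\in\Sigma}\tau(\mathfrak P)$ is a $\Sigma$-ideal, and it is the largest $\Sigma$-ideal contained in $\mathfrak P$. Taking $\tau=\mathrm{id}$ gives $\ib\subseteq\mathfrak P$, while $\q=\tau(\q)\subseteq\tau(\mathfrak P)$ for every $\tau$ (as $\q$ is $\Sigma$-invariant and lies in $\mathfrak P$) gives $\q\subseteq\ib$. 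Since $\ib\subseteq\mathfrak P$ is disjoint from $S$, maximality of $\q$ forces $\ib=\q$. Conversely, for a prime $\mathfrak P$ put $S:=B\setminus\mathfrak P$: a $\Sigma$-ideal avoids $S$ precisely when it is contained in $\mathfrak P$, and the largest such is exactly $\bigcap_{\tau}\tau(\mathfrak P)$, which is therefore a maximal $\Sigma$-ideal avoiding $S$, hence pseudoprime.

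Granting the characterization, the lemma is immediate. Write $\q=\bigcap_{\tau\in\Sigma}\tau(\mathfrak P)$ with $\mathfrak P\subset B$ prime. Contraction commutes with intersection, so $\varphi^{-1}(\q)=\bigcap_{\tau}\varphi^{-1}(\tau(\mathfrak P))$, and since $\varphi$ is a $\Sigma$-homomorphism one checks on elements that $\varphi^{-1}(\tau(\mathfrak P))=\tau(\varphi^{-1}(\mathfrak P))$. The contraction $\mathfrak Q:=\varphi^{-1}(\mathfrak P)$ is a prime ideal of $A$, so $\varphi^{-1}(\q)=\bigcap_{\tau\in\Sigma}\tau(\mathfrak Q)$ is the largest $\Sigma$-ideal inside the prime $\mathfrak Q$, and the converse half of the characterization shows it is pseudoprime.

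I expect the only real difficulty to be the characterization, and specifically understanding why the witnessing multiplicative set cannot simply be transported. If one tries the naive $S':=\varphi^{-1}(S)$, then from a relation $t=c+\sum_i\beta_i\varphi(a_i)$ with $t\in S$, $c\in\q$ and $a_i$ in a putative larger $\Sigma$-ideal $\ia\supsetneq\varphi^{-1}(\q)$, the coefficients $\beta_i\in B$ cannot be pulled back to $A$, so maximality of $\varphi^{-1}(\q)$ against $S'$ genuinely fails in general. Passing through ordinary primes eliminates these coefficients entirely, the sole nontrivial input being the classical primality of ideals maximal among those avoiding a multiplicative set, combined with the squeeze $\q\subseteq\ib\subseteq\mathfrak P$ that collapses by the maximality built into the definition of pseudoprime.
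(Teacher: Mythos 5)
Your proof is correct, and it is essentially the argument the paper intends: the paper's own ``proof'' is just a citation to \cite[Section~2]{Dima}, and the characterization you establish --- a $\Sigma$-ideal $\q$ is pseudoprime if and only if $\q=\bigcap_{\tau\in\Sigma}\tau(\mathfrak{P})$ for some prime $\mathfrak{P}$ --- is exactly the description the paper itself relies on elsewhere, namely in the proof of Lemma~\ref{lem:specsurjective} and in the remark following Lemma~\ref{lem:preimage} that $\p_{\Sigma'}$ is pseudoprime when $\p$ is prime. Granting that characterization, your contraction step (preimages commute with intersections, $\varphi^{-1}(\tau(\mathfrak{P}))=\tau\left(\varphi^{-1}(\mathfrak{P})\right)$ by $\Sigma$-equivariance, and $\varphi^{-1}(\mathfrak{P})$ prime) is exactly right, so your write-up correctly supplies the details the paper outsources to the reference.
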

\begin{proof}
See \cite[Section~2]{Dima}.
\end{proof}

The set of all pseudoprime ideals of $R$ will be denoted by
$
\PSpec R$ or $\PSpec^{\Sigma'} R$.
For $s \in R$,
$
(\PSpec R)_s$
denotes the set of pseudoprime ideals of $R$ not containing $s$. Let $R_1$ and $R_2$ be $\Sigma'$-rings and $f : R_1\to R_2$ be a $\Sigma'$-homomorphism. Then
$
f^*(\q) := f^{-1}(\q)
$
defines a map $$f^* :\PSpec R_2 \to \PSpec R_1$$ by Lemma~\ref{lem:preimage}. For an ideal $\ia \subset R$ denote by $\ia_{\Sigma'}$ the largest $\Sigma'$-ideal of $R$ contained in $\ia$. Note that if $\p$ is a prime ideal of $R$, then the ideal $\p_{\Sigma'}$ is pseudoprime.

Recall that an $R$-module $M$ with an action of $\Sigma'$ is called a $\Sigma'$-module if for all $\tau \in \Sigma'$, $r \in R$, and $m \in M$, we have $
\tau(rm)=\tau(r)\tau(m)$.
    A $\Sigma'$-ring is called simple if it contains no proper $\Sigma'$-ideals
    except for $(0)$.

\begin{definition} A ring $R$ is called absolutely flat if every $R$-module is flat.
An absolutely flat simple $\Sigma'$-ring $\kkk$ is called a  $\Sigma'$-pseudofield (see~\cite{Dima}).
\end{definition}

For every subset $E \subset R\{y_1,\ldots,y_n\}_{\Sigma'}$, let $\V(E) \subset R^n$ be the set of common zeroes of $E$ in $R^n$.
Conversely, for every subset $X \subset R^n$, let $$\I(X) \subset R\{y_1,\ldots,y_n\}_{\Sigma'}$$ be the $\Sigma'$-ideal of all
polynomials in $R\{y_1,\ldots,y_n\}_{\Sigma'}$ vanishing on $X$.  One sees that, for any reduced $R$ and $\Sigma'$-ideal $I \subset R\{y_1,\ldots,y_n\}_{\Sigma'}$, we have
$
\sqrt{I} \subset \I(\V(I))$.
\begin{definition}\label{def:diffclosedring}\cite[Section 4.3]{Dima} A $\Sigma'$-pseudofield $R$ is called {\it difference closed} if, for every $\Sigma'$-ideal $I\subset R\{y_1,\ldots,y_n\}_{\Sigma'}$, we have
$
\sqrt{I} = \I(\V(I))$.
\end{definition}

\subsection{Properties of pseudofields}\label{sec:pseudofields}


\begin{proposition}\label{prop:1} Let $L$ be $\Sigma'$-simple ring and $K \subset L$
be an absolutely flat $\Sigma'$-subring. Then, $K$ is a $\Sigma'$-pseudofield.
\end{proposition}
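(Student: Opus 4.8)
The plan is to show that $K$ is $\Sigma'$-simple, since absolute flatness of $K$ is already part of the hypothesis and a $\Sigma'$-pseudofield is by definition an absolutely flat simple $\Sigma'$-ring. So I would fix a nonzero $\Sigma'$-ideal $\ia \subset K$ and aim to prove $1 \in \ia$, i.e. $\ia = K$. The strategy splits into two movements: first push $\ia$ up into $L$, where $\Sigma'$-simplicity forces the extended ideal to be the unit ideal; then descend the resulting relation $1 = \sum a_i \ell_i$ back to $K$, where absolute flatness lets me replace the finitely many generators $a_i$ by a single idempotent.

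For the first movement, I would consider the ideal $\ia L \subset L$ generated by $\ia$. Because $\ia$ is a $\Sigma'$-ideal of $K$ and each $\tau \in \Sigma'$ acts on $L$ by a ring automorphism restricting to the given action on $K$, one gets $\tau(\ia L) = \tau(\ia)L = \ia L$, so $\ia L$ is a $\Sigma'$-ideal of $L$. Since $\ia \subset \ia L$ and $\ia \ne 0$, this ideal is nonzero, and $\Sigma'$-simplicity of $L$ yields $\ia L = L$. In particular $1 = \sum_{i=1}^n a_i \ell_i$ for some $a_1,\ldots,a_n \in \ia$ and $\ell_1,\ldots,\ell_n \in L$.

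For the descent, I would invoke the standard characterization that an absolutely flat (equivalently, von Neumann regular) commutative ring has every finitely generated ideal generated by an idempotent. Applying this to $I_0 = (a_1,\ldots,a_n) \subset K$ produces an idempotent $f \in K$ with $I_0 = fK$, and note $I_0 \subset \ia$. Extending to $L$ gives $I_0 L = fL$, and the relation above shows $1 \in I_0 L = fL$, so $f$ is a unit in $L$; from $f^2 = f$, multiplying by $f^{-1}$ forces $f = 1$. Hence $I_0 = K$, so $1 \in \ia$ and $\ia = K$, as required.

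The hard part will be precisely this descent step: the relation $1 = \sum a_i \ell_i$ only places $1$ in the extension $\ia L$, and in general the contraction $\ia L \cap K$ can be strictly larger than $\ia$, so one cannot conclude $1 \in \ia$ from simplicity of $L$ alone. It is exactly the absolute flatness of $K$—through the reduction of finitely generated ideals to idempotents—that closes this gap, which is why the hypothesis on $K$ is indispensable and not merely a byproduct of being contained in a simple ring.
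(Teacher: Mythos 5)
Your proof is correct, but it descends from $L$ to $K$ by a genuinely different mechanism than the paper. The paper argues element-by-element: it fixes a single nonzero $a \in K$, uses absolute flatness only in the weak form of \cite[Exercise~II.27]{AM} (every \emph{principal} ideal is generated by an idempotent) to assume $a^2 = a$, writes $1 = h_0 a + h_1\sigma_1(a) + \ldots + h_r\sigma_r(a)$ with $h_i \in L$ by $\Sigma'$-simplicity, and then runs an induction correcting the coefficients one at a time: multiplying the relation by $1 - \sigma_k(a)$ annihilates the $k$-th term (idempotency of $\sigma_k(a)$), replaces its coefficient by $1 \in K$, and multiplies the already-corrected coefficients by $1 - \sigma_k(a) \in K$, so they stay in $K$; after finitely many steps every coefficient lies in $K$, whence $1$ is in the $\Sigma'$-ideal of $K$ generated by $a$. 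You instead extend the whole $\Sigma'$-ideal $\ia$ at once and replace the induction by two facts: the finitely generated ideal $(a_1,\ldots,a_n) \subset K$ is generated by a single idempotent $f$ (the finitely-generated form of von Neumann regularity, which follows from the principal case via $(e,e') = (e + e' - ee')$), and an idempotent $f$ with $1 \in fL$ must equal $1$, since $1 = f\ell$ gives $f = f^2\ell = f\ell = 1$. What each approach buys: yours is shorter and isolates the descent in one clean observation about idempotents that become units, exactly addressing the contraction problem you flag at the end (in general $\ia L \cap K \supsetneq \ia$ is possible, and that is what both proofs must work around); the paper's induction needs only the principal-ideal fact it cites and never invokes the finitely generated case, staying entirely at the level of a single element and its $\Sigma'$-translates. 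Both proofs rest on the same two pillars --- $\Sigma'$-simplicity of $L$ to produce the unit relation, and idempotents supplied by absolute flatness of $K$ to bring it down --- but the descent arguments are not the same.
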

\begin{proof} Let $0\neq a \in K$. We will show that the $\Sigma'$-ideal of $K$ generated by $a$ contains $1$. Since $K$ is absolutely flat, we may assume that
$a^2=a$,
since every principal ideal is generated by an idempotent~\cite[Exercise~II.27]{AM}.
Since the $\Sigma'$-ideal generated by $a$ in $L$ contains $1$, there exist $h_i \in L$, $0\Le i\Le r$, such that
\begin{equation}\label{eq:1sigmaa}
1 = h_0 a + h_1\sigma_1(a)+\ldots+h_r\sigma_r(a)
\end{equation}
for some $\sigma_k\in\Sigma'$. Set $\sigma_0=\id$ for notation.  We will show by induction on $k\Le r$ that the $h_i$'s can be selected so that $h_i\in K$, $0\Le i \Le k$.
The base $k = 0$ is done in the same way as the inductive step.
Assume the statement for $k-1\geq 0$. We will show it for $k$. Multiplying~\eqref{eq:1sigmaa} by $1-\sigma_k(a)$ and using $a^2=a$, we have:
$$
1 -\sigma_{k}(a)= (1 -\sigma_{k}(a))(h_0\cdot a +\ldots+h_{k-1}\cdot\sigma_{k-1}(a)+h_{k+1}\cdot\sigma_{k+1}(a)+\ldots+h_r\cdot\sigma_r(a)).
$$
Hence,
\begin{align*}
1 = &(1 -\sigma_{k}(a))h_0\cdot a +\ldots+(1 -\sigma_{k}(a))h_{k-1}\cdot\sigma_{k-1}(a)+\\
&+\sigma_{k}(a)+(1 -\sigma_{k}(a))h_{k+1}\cdot\sigma_{k+1}(a)+\ldots+(1-\sigma_k(a))h_r\cdot\sigma_r(a)
\end{align*}
with $(1 -\sigma_{k}(a))h_0,\ldots,(1 -\sigma_{k}(a))h_{k-1}, 1 \in K$, which finishes the proof.
 \end{proof}

\begin{proposition}\label{prop:2} Let $L$ be an absolutely flat ring and $H \subset \Aut(L)$. Then the ring $L^H$ is absolutely flat.
\end{proposition}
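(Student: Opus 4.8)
The plan is to rely on the standard characterization that a commutative ring is absolutely flat if and only if every element $a$ admits a \emph{weak inverse}, that is, an element $b$ with $a=a^2b$; equivalently, every principal ideal is generated by an idempotent, as used in the proof of Proposition~\ref{prop:1}. Since $L^H\subset L$ is a subring, the task reduces to showing that for each $a\in L^H$ one can find such a weak inverse lying already inside $L^H$.

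First I would produce a canonical weak inverse in $L$ itself. Because $L$ is absolutely flat, there is some $c\in L$ with $aca=a$, i.e. $a^2c=a$. Setting $b:=ac^2$, a direct computation using $a^2c=a$ yields the three identities
\[
aba=a,\qquad bab=b,\qquad ab=ba.
\]
The key point is that such a $b$ is \emph{unique}: if $b'$ satisfies the same three relations, then $e:=ab$ and $e':=ab'$ are idempotents with $aL=eL=e'L$, and in a commutative ring an idempotent generating a given principal ideal is unique, so $e=e'$; feeding $e=e'$ back into $b=bab=be$ and $b'=b'ab'=eb'$ forces $b=b'$.

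Now I would exploit this uniqueness together with the hypothesis $a\in L^H$. For any $\tau\in H$, applying the automorphism $\tau$ to the three defining relations of $b$ shows that $\tau(b)$ is a weak inverse of $\tau(a)=a$ satisfying the very same relations $a\,\tau(b)\,a=a$, $\tau(b)\,a\,\tau(b)=\tau(b)$, and $a\,\tau(b)=\tau(b)\,a$. By uniqueness, $\tau(b)=b$ for every $\tau\in H$, whence $b\in L^H$. Then $a=a^2b$ with $b\in L^H$, so every element of $L^H$ has a weak inverse in $L^H$, and thus $L^H$ is absolutely flat.

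I expect the only genuine obstacle to be this middle step. The naive argument, which merely transports the idempotent $e$ generating $(a)_L$ into $L^H$, does not by itself place a weak inverse inside $L^H$, and one cannot fall back on integrality of $L$ over $L^H$ since $H$ may be infinite. Isolating the \emph{unique} reflexive, commuting weak inverse $b$ is precisely what makes it $H$-stable and circumvents any finiteness assumption on $H$.
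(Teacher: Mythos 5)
Your proof is correct and follows essentially the same route as the paper: both arguments isolate the unique quasi-inverse pair (your $b$ with $aba=a$, $bab=b$ is exactly the paper's $a'$ with idempotent $e=aa'$, $a=ea$, $a'=ea'$), prove its uniqueness via the uniqueness of the idempotent generating $(a)$, and then use that uniqueness to conclude the quasi-inverse is fixed by every $\tau\in H$. The only cosmetic difference is that the paper verifies $\sigma(e)=e$ by a separate computation before invoking uniqueness for $a'$, whereas you get both invariances in one stroke by applying $\tau$ to the defining relations.
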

\begin{proof}
    Let $0\neq a \in L^H$. Then by \cite[Exercise~II.27]{AM} there exist unique an idempotent $e$ and $a'$ in $L$ such that
\begin{equation}\label{eq:eaa}
e = aa',\ \ a=ea,\ \ \text{and}\ \ a'=ea'.
\end{equation}
To see uniqueness, note that
if $(\bar e, \bar a')$
is another such pair, then $e\bar e = e a \bar a'=a \bar a' = \bar e$
and, similarly, $e\bar e =  e$. So, the element $e$ is unique. Now,
$a' = e a' = \bar e a' = a \bar a' a'$ and, in the same manner, $\bar a' = \bar e \bar a' = e \bar a' = a a' \bar a'$.

We will show now that $e$ and $a'$ are $H$-invariant. For $\sigma \in H$ we have
$$
a = \sigma(a) = \sigma(ae) = a\sigma(e).$$
Multiplying by $a'$, we obtain
$e = e\sigma(e)$.
Similarly, we obtain
$
e = e\sigma^{-1}(e)$,
which implies that
$\sigma(e)=e\sigma(e)$.
Hence,
$
\sigma(e)=e$.
We, therefore, have
\begin{equation}\label{eq:eaasigma}
e = a\sigma(a'),\ \ a=ea,\ \ \text{and}\ \ \sigma(a')=e\sigma(a').
\end{equation}
Since the pair $(e,a')$ is unique,~\eqref{eq:eaa} and~\eqref{eq:eaasigma}
imply that
$
\sigma(a')= a'$.
Applying \cite[Exercise~II.27]{AM} again, we conclude that $L^H$ is absolutely flat.
 \end{proof}

\begin{proposition}\label{prop:26} Let $A$ be a $\Sigma_1$-closed pseudofield. Then the ring $R = A[\Sigma_1]$ is completely reducible:
$R \cong A\oplus \ldots \oplus A$
as $\Sigma_1$-modules over $A$. In other words, every $\Sigma_1$-module over $A$ has a basis  of $\Sigma_1$-invariant elements.
Moreover,
$
A[\Sigma_1] \cong \Mn_n(C)$
as rings, where $C = A^{\Sigma_1}$.
\end{proposition}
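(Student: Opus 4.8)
The plan is to derive both assertions from a single object: an idempotent $e\in A$ whose orbit $e_\tau:=\tau(e)$, $\tau\in\Sigma_1$, is a complete orthogonal system permuted regularly by $\Sigma_1$. Since $\sigma(e_\tau)=(\sigma\tau)(e)=e_{\sigma\tau}$ automatically, and orthogonality $e_\sigma e_\tau=0$ $(\sigma\ne\tau)$ is the same as $e\cdot\rho(e)=0$ for all $\id\ne\rho\in\Sigma_1$ (apply $\sigma^{-1}$), the requirements on $e$ reduce to
$$e^2=e,\qquad e\cdot\rho(e)=0\ \ (\id\ne\rho\in\Sigma_1),\qquad \sum_{\tau\in\Sigma_1}\tau(e)=1.$$
Granting such an $e$, set $n=|\Sigma_1|$ and define $E_{\sigma\tau}:=e_\sigma\cdot(\sigma\tau^{-1})\in R$. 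Using $\tau(e)=e_\tau$, the relation $\tau\cdot a=\tau(a)\tau$, and orthogonality, a direct computation gives $E_{\sigma\tau}E_{\rho\pi}=\delta_{\tau\rho}E_{\sigma\pi}$ and $\sum_\sigma E_{\sigma\sigma}=1$, so the $E_{\sigma\tau}$ are a full set of $n\times n$ matrix units. The corner ring is $E_{\id\id}RE_{\id\id}=eAe=eA$, and the maps $c\mapsto ce$ and $x\mapsto\sum_\tau\tau(x)$ identify $eA\cong A^{\Sigma_1}=C$ as rings; hence $R=A[\Sigma_1]\cong\Mn_n(C)$.

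The same idempotent yields the module statements. From $A=\bigoplus_{\tau}e_\tau A$ with $\Sigma_1$ permuting the summands regularly and $e_{\id}A\cong C$, one gets $A\cong C[\Sigma_1]$ as a $\Sigma_1$-module over $C$; in particular $A$ is free over $C$ of rank $n$ (a normal basis), which is what makes the matrix algebra a genuine $\Mn_n(C)$ rather than a form of it. For an arbitrary $\Sigma_1$-module $M$ over $A$, the maps $M^{\Sigma_1}\to eM$, $x\mapsto ex$, and $eM\to M^{\Sigma_1}$, $m\mapsto\sum_\tau\tau(m)$, are mutually inverse, and the resulting natural map $A\otimes_C M^{\Sigma_1}\to M$ is an isomorphism; a $C$-basis of $M^{\Sigma_1}$ therefore becomes an $A$-basis of $M$ consisting of $\Sigma_1$-invariant elements. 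Applied to $M=R$ this is precisely the decomposition $R\cong A\oplus\ldots\oplus A$ as $\Sigma_1$-modules over $A$, so $R$ is completely reducible.

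The crux, and the step I expect to be the main obstacle, is the existence of the fundamental idempotent $e$, and this is exactly where $\Sigma_1$-closedness enters. I would encode the three conditions above as a single $\Sigma_1$-ideal $I\subset A\{y\}_{\Sigma_1}$ generated by $y^2-y$, by $y\cdot\rho y$ for $\id\ne\rho\in\Sigma_1$, and by $\sum_\tau\tau y-1$; by Definition~\ref{def:diffclosedring} a solution $e\in\V(I)\subset A$ exists as soon as $I$ is proper, since otherwise $1\in\sqrt I=\I(\V(I))$ forces $\V(I)=\varnothing$. The difficulty is thus to prove $1\notin I$, equivalently that $\Sigma_1$ acts freely on $\mathrm{Spec}\,A$: if some $\id\ne\rho$ fixed a nonzero idempotent piece $g$ with $\rho$ acting trivially on $gA$, then $g\cdot\rho(g)=g\ne0$ would obstruct the system, and conversely a free action of the finite group $\Sigma_1$ on the Boolean space $\mathrm{Spec}\,A$ admits a clopen transversal, producing $e$. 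I would establish this freeness from $\Sigma_1$-simplicity and absolute flatness of $A$ (no proper invariant clopen set, idempotents available for every finitely generated ideal) together with $\Sigma_1$-closedness, which rules out nontrivial stabilizers by solving the equation forcing $\rho y\ne y$ on each piece. This freeness is precisely what breaks down for non-faithful actions — where no such $e$ can exist and $R$ is not a single matrix algebra — so it is the genuine heart of the proposition.
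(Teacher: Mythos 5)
The heart of your argument --- the existence of the idempotent $e$ --- is the one step you do not actually prove, and the pivot on which your plan for it turns is false. You claim that $1\notin I$ is ``equivalently that $\Sigma_1$ acts freely on $\mathrm{Spec}\,A$'' and then propose to establish that freeness from simplicity, absolute flatness, and closedness. In fact $1\notin I$ holds for \emph{every} nonzero $\Sigma_1$-ring $A$, free action or not: as an ordinary ideal, $I$ is generated by the $\Sigma_1$-translates $(\tau y)^2-\tau y$, $(\alpha y)(\beta y)$ for $\alpha\ne\beta$, and $\sum_\tau\tau y-1$, and all of these are killed by the evaluation homomorphism $A\{y\}_{\Sigma_1}\to F_{\Sigma_1}(A)$ sending $\tau y\mapsto\delta_\tau$ (the indicator of $\tau$) and $a\in A$ to the constant function $a$; since $F_{\Sigma_1}(A)\neq 0$, the ideal $I$ is proper. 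The trivial action of $\Sigma_1$ on an algebraically closed field shows your claimed equivalence fails: there $1\notin I$ while $\V(I)=\varnothing$, and the correct conclusion is that such a field is not $\Sigma_1$-closed, not that $I$ is improper. So the freeness discussion is both wrong as stated and unnecessary: Definition~\ref{def:diffclosedring} applied to the automatically proper ideal $I$ hands you $e$ at once, and freeness of the action on $\mathrm{Spec}\,A$ is a \emph{consequence} of having $e$, not a prerequisite for it. Your proposed route to freeness (``solving the equation forcing $\rho y\ne y$ on each piece'') names no ideal and cannot succeed from simplicity and absolute flatness alone, precisely because of the trivial-action example; so the step you yourself call the main obstacle is genuinely missing --- though the repair is one line.

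Everything downstream of $e$ is correct: the matrix-unit identities for $E_{\sigma\tau}=e_\sigma\cdot(\sigma\tau^{-1})$, the corner ring $eRe=eA\cong C$, the mutually inverse maps $M^{\Sigma_1}\to eM$, $x\mapsto ex$, and $eM\to M^{\Sigma_1}$, $m\mapsto\sum_\tau\tau(m)$, and the normal basis $A=\bigoplus_\tau Ce_\tau$. Two points should still be made explicit. First, $C=A^{\Sigma_1}$ is a \emph{field} --- if $0\ne c\in C$, then $cA$ is a nonzero $\Sigma_1$-ideal, hence $cA=A$, and the inverse of an invariant unit is again invariant --- and this is what entitles you to ``a $C$-basis of $M^{\Sigma_1}$''. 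Second, bijectivity of $A\otimes_CM^{\Sigma_1}\to M$ needs an argument: surjectivity because $e_\sigma m=e_\sigma\cdot\sum_\tau\tau\left(e\,\sigma^{-1}(m)\right)$ writes each $e_\sigma m$ as $A$ times an invariant element, and injectivity because a relation $\sum_\sigma e_\sigma y_\sigma=0$ with $y_\sigma\in M^{\Sigma_1}$ gives, after multiplying by $e_\tau$, applying each $\rho\in\Sigma_1$ and summing, $y_\tau=\bigl(\sum_\rho e_{\rho\tau}\bigr)y_\tau=0$. Finally, for comparison: the paper gives no proof at all, citing \cite{Dima}, where the statement rests on the structure theorem that a $\Sigma_1$-closed pseudofield is $F_{\Sigma_1}(K)$ for an algebraically closed field $K$ (\cite[Theorem~17(4)]{Dima}, invoked in the paper's Proposition~\ref{prop:11}); under that identification your $e$ is simply $\delta_{\id}$. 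Your Nullstellensatz construction of $e$, once repaired as above, is a legitimate self-contained alternative to that citation.
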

\begin{proof}
Follows \cite[Proposition~26 and Remark~27]{Dima}.
 \end{proof}

\begin{proposition}\label{prop:idealcorrespondance} Let $R$ be a $\Sigma$-simple ring and $A := R^\sigma$ be a $\Sigma_1$-difference closed pseudofield. Let $B$ be any $\Sigma$-$A$-algebra with $\sigma$ acting as the identity. Then the $\Sigma$-homomorphism
$
B \to R\otimes_A B$, with $b\mapsto 1\otimes b,$ $b\in B$,
induces a bijection
$$
\Id^{\Sigma_1}(B) \longleftrightarrow \Id^\Sigma(R\otimes_AB)
$$
via
$
\ia \subset B \longrightarrow \ia^e := R\otimes_A\ia$,
$\ib^c := \ib\cap B \longleftarrow \ib \subset R\otimes_AB$.
\end{proposition}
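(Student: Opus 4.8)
The plan is to exploit that $A$, being a $\Sigma_1$-pseudofield, is absolutely flat, so that every $A$-module is flat; this reduces the whole statement to one injectivity fact plus a single \emph{core claim}, whose proof then draws on the $\Sigma_1$-closedness of $A$ through Proposition~\ref{prop:26} and on the $\Sigma$-simplicity of $R$. First I would record the flatness consequences. The inclusion $A = R^\sigma \hookrightarrow R$ is the kernel of $\sigma - \id$, i.e. the sequence $0 \to A \to R \xrightarrow{\sigma - \id} R$ is exact; since tensoring with the flat $A$-module $B$ is exact, we get both that $B \to R\otimes_A B$ is injective and that $(R\otimes_A B)^\sigma = A\otimes_A B = B$. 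A direct check shows $\ia^e = R\otimes_A\ia$ is a $\Sigma$-ideal when $\ia$ is a $\Sigma_1$-ideal ($\sigma$-stability is automatic since $\sigma$ fixes $B$; $\Sigma_1$-stability uses that $\ia$ is $\Sigma_1$-stable), and that $\ib^c = \ib\cap B$ is a $\Sigma_1$-ideal. The identity $(\ia^e)^c = \ia$ is then immediate: $R\otimes_A\ia = \ker\big(R\otimes_A B \to R\otimes_A(B/\ia)\big)$ by flatness, and intersecting with $B$ and using the injectivity of $B/\ia \hookrightarrow R\otimes_A(B/\ia)$ recovers $\ia$.

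For the opposite composite I would prove $(\ib^c)^e = \ib$. Since $R\otimes_A(\ib\cap B) \subseteq \ib$ is clear, replacing $B$ by $B/(\ib\cap B)$ and $\ib$ by its image reduces everything to the core claim: if $\ib$ is a $\Sigma$-ideal of $R\otimes_A B$ with $\ib\cap B = 0$, then $\ib = 0$. Here I would invoke Proposition~\ref{prop:26} to fix an $A$-basis $\{b_j\}$ of $B$ consisting of $\Sigma_1$-invariant elements, so that each element of $R\otimes_A B$ is uniquely $\sum_j r_j\otimes b_j$ with $r_j\in R$, almost all zero. Assuming $\ib\neq 0$, choose $0\neq\xi = \sum_j r_j\otimes b_j \in \ib$ with the minimal number $n$ of nonzero coefficients $r_1,\ldots,r_n$. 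The set of $b_1$-coefficients of elements of $\ib$ supported on $\{b_1,\ldots,b_n\}$ is an ideal of $R$; it is $\sigma$-stable because $\sigma$ fixes the $b_j$, and — this is the decisive point — it is $\Sigma_1$-stable precisely because the $b_j$ were chosen $\Sigma_1$-invariant, so that each $\rho\in\Sigma_1$ merely transforms the coefficients. Hence it is a nonzero $\Sigma$-ideal of $R$, and $\Sigma$-simplicity forces it to equal $R$; thus there is $\eta = 1\otimes b_1 + \sum_{j=2}^n s_j\otimes b_j \in \ib$. Then $\sigma(\eta)-\eta = \sum_{j=2}^n(\sigma(s_j)-s_j)\otimes b_j \in \ib$ is supported on $\{b_2,\ldots,b_n\}$, so has fewer than $n$ terms and must vanish by minimality; therefore each $s_j\in R^\sigma = A$, and $\eta = 1\otimes\big(b_1 + \sum_{j=2}^n s_j b_j\big)$ is a nonzero element of $\ib\cap B = 0$, a contradiction.

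The main obstacle is exactly this normalization step. The coefficient-ideal construction that lets one rescale the leading coefficient to $1$ produces, a priori, only a $\sigma$-ideal of $R$, whereas $R$ is merely $\Sigma$-simple and not $\sigma$-simple, so $\sigma$-simplicity is unavailable. The resolution is to pay for this in advance by choosing $\Sigma_1$-invariant basis vectors, which is possible only because $A$ is $\Sigma_1$-closed; this is where the difference-closedness hypothesis on $A$ is genuinely used, upgrading the $\sigma$-ideal to a $\Sigma$-ideal so that $\Sigma$-simplicity of $R$ applies. Everything else is bookkeeping with flatness, and the reduction to $B/(\ib\cap B)$ is valid because $\ib\cap B$ is $\Sigma$-stable and $R\otimes_A(B/(\ib\cap B)) = (R\otimes_A B)\big/(\ib^c)^e$ by right-exactness.
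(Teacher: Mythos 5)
Your proof is correct and follows essentially the same route as the paper's: the hard direction (a $\Sigma$-ideal of $R\otimes_AB$ with zero contraction is zero) is argued exactly as in the paper, via a $\Sigma_1$-invariant $A$-basis of $B$ from Proposition~\ref{prop:26}, a minimal-support element, the coefficient ideal upgraded to a nonzero $\Sigma$-ideal of $R$ (using $\Sigma$-invariance of the basis) and hence all of $R$ by $\Sigma$-simplicity, followed by the $\sigma(\eta)-\eta$ shortening trick. The only, immaterial, difference is in the easy containment $(\ia^e)^c=\ia$: the paper gets it from faithful flatness of $R\otimes_AB$ over $B$ (as $R$ is free over $A$ by Proposition~\ref{prop:26}, citing \cite[Exercise~III.16]{AM}), whereas you get it from flatness of all $A$-modules together with the exact sequence $0\to A\to R\xrightarrow{\ \sigma-\mathrm{id}\ }R$, which also yields your (unneeded but correct) observation that $(R\otimes_AB)^\sigma=B$.
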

\begin{proof}
Let $I$ be a $\Sigma$-ideal of the ring $R\otimes_AB$ and let $I^c=J$. We will show that $I = J^e$. In other words,
by passing to $R\otimes_A(B/J)$, we will show that if $I^c = (0)$,
then $I = (0)$. By Proposition~\ref{prop:26}, there exists a basis $\{b_i\}_{i\in \mathcal{I}}$ of $B$ over $A$ consisting of $\Sigma_1$-invariant elements. Then, every element of $R\otimes_A B$ is of the form
$$
a_1\otimes b_{i_1}+\ldots+a_n\otimes b_{i_n}
$$
for some $a_i \in R$, $1\Le i\Le n$. Let $0\ne u \in I$ have the shortest
expression of the form
$
u = a_1\otimes b_{j_1}+\ldots+a_k\otimes b_{j_k}$,
$$
M = \left\{a \in R\:|\: \exists c_2,\ldots,c_k \in R, i_1,\ldots,i_k\in\mathcal{I}: a\otimes b_{i_1}+c_2\otimes b_{i_2}+\ldots+c_k\otimes b_{i_k} \in I\right\}.
$$
As $0\neq a_1\in M$, and
since $\Sigma(b_i) = b_i$, $1\Le i\Le n$, the set $M$ is a non-zero $\Sigma$-ideal of $R$. Hence, $1 \in M$. Therefore, there exists $u$ with $a_1 = 1$. Since
\begin{equation}\label{eq:usigmau1}
u - \sigma(u) = (a_2 -\sigma(a_2))\otimes b_{i_2}+\ldots+  (a_k -\sigma(a_k))\otimes b_{i_k} \in I
\end{equation}
and has a shorter expression than $u$, we have
\begin{equation}\label{eq:usigmau2}
u - \sigma(u)=0.
\end{equation} Since $\{b_i\}_{i\in\mathcal{I}}$ is a basis of $B$ over $A$,
$
\{1\otimes b_i\}_{i\in\mathcal{I}}
$
is a basis of $R\otimes_A B$ over $R$. Therefore,~\eqref{eq:usigmau1} and~\eqref{eq:usigmau2} imply that
$
\sigma(a_2)=a_2,\ldots,\sigma(a_k)=a_k,$
that is, $a_2,\ldots,a_k \in A$. Thus,
$$
u= 1\otimes\left(b_{i_1}+a_2b_{i_2}+\ldots+a_k b_{i_k}\right).$$
Hence,
$$
0\ne b_{i_1}+a_2b_{i_2}+\ldots+a_k b_{i_k} \in I^c,$$
contradicting $I^c = (0)$. Therefore, we have shown that
$
{(I^c)}^e = I$.
On the other hand, since $R$ is a free $A$-module, the $B$-module $R\otimes_A B$ is also free and, therefore, faithfully flat. Thus, by \cite[Exercise~III.16]{AM} for every ideal  $J \subset B$ we have
$
{(J^e)}^c = J$,
which finishes the proof.
 \end{proof}

\begin{corollary}\label{cor:tensor} Let $B$ be a $\Sigma$-ring containing a $\Sigma$-pseudofield $L$ with $C_L:= L^\sigma$ being a $\Sigma_1$-closed pseudofield. Let $C \subset B^\sigma $ be a
    $\Sigma_1$-subring such that $C_L \subset C$. Then
$
L\cdot C = L\otimes_{C_L} C$.
\end{corollary}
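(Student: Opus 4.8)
The plan is to identify $L\cdot C$ with the image of the canonical multiplication homomorphism $\mu\colon L\otimes_{C_L}C\to B$, $\ell\otimes c\mapsto \ell c$, and then to prove that $\mu$ is injective; since $\mu$ is visibly surjective onto $L\cdot C$, this yields the desired isomorphism $L\otimes_{C_L}C\cong L\cdot C$. Note that $\mu$ is a $\Sigma$-homomorphism once we regard $C$ as a $\Sigma$-ring with $\sigma$ acting as the identity, which is legitimate precisely because $C\subset B^\sigma$.

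The crux is to place ourselves in a position to apply Proposition~\ref{prop:idealcorrespondance}. I would take $R=L$, which is $\Sigma$-simple since it is a $\Sigma$-pseudofield, and $A=C_L=L^\sigma$, which is a $\Sigma_1$-closed pseudofield by hypothesis. Since $C_L\subset C$ and $\sigma$ acts as the identity on $C$, the ring $C$ is exactly a $\Sigma$-$A$-algebra of the type the proposition requires. Consequently the inclusion $C\to L\otimes_{C_L}C$, $c\mapsto 1\otimes c$, induces a bijection $\Id^{\Sigma_1}(C)\longleftrightarrow\Id^\Sigma(L\otimes_{C_L}C)$ via extension and contraction; in particular, $I=(I^c)^e$ for every $\Sigma$-ideal $I$ of $L\otimes_{C_L}C$.

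I would then apply this to $I:=\ker\mu$, which is a $\Sigma$-ideal of $L\otimes_{C_L}C$. Its contraction $I^c=I\cap(1\otimes C)$ is precisely the kernel of the restriction of $\mu$ to $1\otimes C$; but that restriction is the inclusion $C\hookrightarrow B$, $1\otimes c\mapsto c$, which is injective, so $I^c=(0)$. By the correspondence, $I=(I^c)^e=(0)$, and hence $\mu$ is injective. The step requiring the most care is verifying that the hypotheses of Proposition~\ref{prop:idealcorrespondance} genuinely hold in this setting---that $L$ is $\Sigma$-simple and that $C$ really carries the structure of a $\Sigma$-$C_L$-algebra with $\sigma=\mathrm{id}$---together with the identification of $I^c$ with the kernel of $C\hookrightarrow B$; once these are in place, the conclusion is immediate.
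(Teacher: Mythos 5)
Your proof is correct and is essentially identical to the paper's own argument: both take the kernel $I$ of the $\Sigma$-homomorphism $L\otimes_{C_L}C\to B$, $l\otimes c\mapsto lc$, observe that its contraction to $C$ is $(0)$ because $C$ sits inside $B$, and invoke Proposition~\ref{prop:idealcorrespondance} (with $R=L$, $A=C_L$, and $C$ as the $\Sigma$-algebra on which $\sigma$ acts trivially) to conclude $I=(0)$. The paper compresses this into two lines; your write-up merely makes explicit the verification that $L$ is $\Sigma$-simple and that $C$ satisfies the proposition's hypotheses.
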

\begin{proof}
The kernel $I$ of the $\Sigma$-homomorphism $$
L\otimes_{C_L} C \to L\cdot C\subset B,\quad l\otimes c\mapsto l\cdot c,$$
is a $\Sigma$-ideal with $I^c = (0) \subset C$. By Proposition~\ref{prop:idealcorrespondance}, we conclude that $I = 0$.
 \end{proof}

\subsection{Noetherian pseudofields}\label{sec:pseudofieldsN}
\begin{lemma}\label{lem:specsurjective} Let $A\subset B$ be $\Sigma$-rings such that for some $s \in A$ the map $\Spec B_s \to \Spec A_s$ is surjective. Then the map
$
\varphi : (\PSpec B)_s \to (\PSpec A)_s$
is surjective as well.
\end{lemma}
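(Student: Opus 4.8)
The plan is to transfer the surjectivity from ordinary spectra to pseudospectra by passing back and forth between a pseudoprime ideal and an ``underlying'' prime ideal, the bridge being the elementary identity that for any ideal $\ia$ of a $\Sigma$-ring one has $\ia_\Sigma=\bigcap_{\tau\in\Sigma}\tau(\ia)$. Indeed, $\bigcap_{\tau}\tau(\ia)$ is a $\Sigma$-ideal (it is visibly $\Sigma$-stable since $\tau_0\Sigma=\Sigma$) contained in $\ia$ (take $\tau=\id$), and any $\Sigma$-ideal $J\subseteq\ia$ satisfies $J=\tau^{-1}(J)\subseteq\tau^{-1}(\ia)$ for every $\tau$, hence $J\subseteq\bigcap_\tau\tau(\ia)$; so $\bigcap_\tau\tau(\ia)$ is the largest $\Sigma$-ideal inside $\ia$. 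Throughout, recall that the map in question is $\varphi(\q)=f^{-1}(\q)=\q\cap A$, which lands in $(\PSpec A)_s$ by Lemma~\ref{lem:preimage} (and because $s\in A$, $s\notin\q$ forces $s\notin\q\cap A$).

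First I would replace the given pseudoprime by a prime missing $s$. Fix $\p\in(\PSpec A)_s$. Choosing a multiplicatively closed $S\subset A$ for which $\p$ is maximal among $\Sigma$-ideals disjoint from $S$, and then enlarging $\p$ to an ideal maximal among those containing $\p$ and disjoint from $S$, produces a prime $\tilde{\mathfrak p}$ of $A$ with $\tilde{\mathfrak p}_\Sigma=\p$: the $\Sigma$-ideal $\tilde{\mathfrak p}_\Sigma$ is disjoint from $S$ and contains $\p$, so it equals $\p$ by maximality. Since $s\notin\p=\bigcap_{\tau}\tau(\tilde{\mathfrak p})$, there is some $\tau_0\in\Sigma$ with $s\notin\tau_0(\tilde{\mathfrak p})$. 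Put $\mathfrak r:=\tau_0(\tilde{\mathfrak p})$; this is a prime of $A$ with $s\notin\mathfrak r$, and $\mathfrak r_\Sigma=\bigcap_\tau\tau\tau_0(\tilde{\mathfrak p})=\tilde{\mathfrak p}_\Sigma=\p$.

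Next I would lift and contract. Regarding $\mathfrak r$ as a point of $\Spec A_s$, surjectivity of $\Spec B_s\to\Spec A_s$ yields a prime $\mathfrak R$ of $B$ with $s\notin\mathfrak R$ and $\mathfrak R\cap A=\mathfrak r$. Set $\q:=\mathfrak R_\Sigma$; this is pseudoprime in $B$ (recall $\mathfrak R_\Sigma$ is pseudoprime whenever $\mathfrak R$ is prime), and $\q\subseteq\mathfrak R$ gives $s\notin\q$, so $\q\in(\PSpec B)_s$. Using the identity above together with the fact that each $\tau\in\Sigma$ restricts to an automorphism of the $\Sigma$-subring $A$, I compute
\[
\varphi(\q)=\mathfrak R_\Sigma\cap A=\Bigl(\bigcap_{\tau\in\Sigma}\tau(\mathfrak R)\Bigr)\cap A=\bigcap_{\tau\in\Sigma}\tau(\mathfrak R\cap A)=\bigcap_{\tau\in\Sigma}\tau(\mathfrak r)=\mathfrak r_\Sigma=\p,
\]
where $\tau(\mathfrak R)\cap A=\tau(\mathfrak R\cap A)$ because $\tau$ is a bijection with $\tau(A)=A$. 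Hence $\p$ is in the image and $\varphi$ is surjective.

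The only genuinely delicate step is manufacturing a single prime $\mathfrak r$ that simultaneously avoids $s$ and satisfies $\mathfrak r_\Sigma=\p$; this is exactly what the conjugation trick $\mathfrak r=\tau_0(\tilde{\mathfrak p})$ achieves, exploiting that $s$ lies outside the entire intersection $\bigcap_\tau\tau(\tilde{\mathfrak p})$ and so must miss at least one conjugate, while conjugation leaves $(\cdot)_\Sigma$ unchanged. Everything else is bookkeeping: that $(\cdot)_\Sigma$ commutes with intersecting against the $\Sigma$-stable subring $A$, and the standard Zorn fact that every pseudoprime is the $\Sigma$-saturation of a prime. I would finally note that $\p$ is automatically proper (as $s\in A$ and $s\notin\p$), so the Zorn argument producing $\tilde{\mathfrak p}$ is never vacuous.
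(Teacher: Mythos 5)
Your proof is correct and takes essentially the same route as the paper's: write the pseudoprime as $\bigcap_{\tau\in\Sigma}\tau(\tilde{\mathfrak p})$ for a prime $\tilde{\mathfrak p}$ maximal among ideals disjoint from the multiplicative set, conjugate by some $\tau_0$ to get a prime avoiding $s$, lift it along the surjection $\Spec B_s\to\Spec A_s$, and take the $\Sigma$-saturation $(\cdot)_\Sigma$ of the lift. The only difference is that you spell out the verifications the paper leaves implicit, namely the identity $\ia_\Sigma=\bigcap_{\tau}\tau(\ia)$ and the computation $\mathfrak{R}_\Sigma\cap A=(\mathfrak{R}\cap A)_\Sigma$ showing the lifted pseudoprime really contracts to the one you started with.
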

\begin{proof}
    Let $\q \subset A$ be a pseudoprime ideal with $s \notin \q$. Then, since the maximal ideal not intersecting a multiplicative subset is prime, by definition, there exists a prime ideal $\p \supset \q$ such that
$$
\q = \bigcap\nolimits_{\tau \in \Sigma}\tau(\p)$$
with $\q$ being a maximal $\Sigma$-ideal contained in $\tau(\p)$, $\tau\in\Sigma$. Since $s \notin\q$, there exists $\tau \in \Sigma$ such that $s \notin \tau(\p)$. By our assumption, there exists a prime ideal $\p' \subset B$ with $\p'\cap A = \p^\tau$. Then the ideal $\p'_\Sigma$ is the pseudoprime ideal in $B$ that is mapped to $\p$ by $\varphi$.
 \end{proof}

\begin{lemma}\label{lem:s} Let $A \subset B$ be $\Sigma$-rings such that $A$ is Noetherian and reduced and $B$ is a finitely generated $A$-algebra. Then there exists $0\ne s \in A$ such that the map
$
(\PSpec B)_s \to (\PSpec A)_s$
is surjective.
\end{lemma}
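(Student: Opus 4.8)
The plan is to reduce the assertion to a statement about ordinary prime spectra and then invoke Lemma~\ref{lem:specsurjective}. Indeed, by that lemma it suffices to produce a nonzero $s \in A$ for which the map $\Spec B_s \to \Spec A_s$ is surjective; equivalently, writing $E \subset \Spec A$ for the image of $\varphi \colon \Spec B \to \Spec A$, it suffices to find $0 \neq s \in A$ with $D(s) \subset E$. Note that at this stage the $\Sigma$-structure is irrelevant: Lemma~\ref{lem:specsurjective} imposes no invariance on $s$, so what remains is a purely commutative-algebra statement about the finite-type inclusion $A \subset B$ with $A$ Noetherian.

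First I would record that $E$ is constructible. This is exactly Chevalley's theorem, applicable because $A$ is Noetherian and $B$ is a finitely generated $A$-algebra. The complement $C := \Spec A \setminus E$ is then constructible as well, and the goal becomes to show that its closure $\overline{C}$ misses every minimal prime of $A$.

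The key step, and the place where the hypothesis that $A$ is reduced enters, is to show that every minimal prime $\p$ of $A$ lies in $E$. Since $A$ is reduced, the localization $A_\p$ equals the residue field $\kappa(\p)$. Localizing the injection $A \subset B$ at the multiplicative set $A \setminus \p$ preserves injectivity, so $\kappa(\p) = A_\p \hookrightarrow B_\p = B \otimes_A \kappa(\p)$; in particular $B \otimes_A \kappa(\p) \neq 0$, whence the fibre of $\varphi$ over $\p$ is nonempty, i.e. $\p \in E$. As $A$ is Noetherian it has only finitely many minimal primes $\p_1, \ldots, \p_n$, and since a minimal prime is a generic point of $\Spec A$ (it has no proper generalizations), it can lie in $\overline{C}$ only if it already lies in $C$; as the $\p_i$ lie in $E$, none of them lies in $\overline{C}$.

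Finally I would extract the element $s$. Writing $\overline{C} = V(I)$, the previous step gives $I \not\subset \p_i$ for every $i$, so by prime avoidance $I \not\subset \p_1 \cup \ldots \cup \p_n$; choose $s \in I$ with $s \notin \p_i$ for all $i$. Then $s \neq 0$, since each $\p_i$ contains $0$, and $s \in I$ forces $D(s) \subset \Spec A \setminus V(I) = \Spec A \setminus \overline{C} \subset E$. Thus every prime of $A$ not containing $s$ lies in the image of $\varphi$, which is precisely the surjectivity of $\Spec B_s \to \Spec A_s$, and Lemma~\ref{lem:specsurjective} then yields surjectivity of $(\PSpec B)_s \to (\PSpec A)_s$. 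The only genuine subtlety is the reducedness input in the third paragraph: without it the localization $A_\p$ at a minimal prime need not be a field, the minimal primes need not belong to $E$, and the open set $D(s)$ could collapse, forcing $s = 0$.
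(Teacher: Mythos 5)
Your proof is correct, and it arrives at the element $s$ by a genuinely different route than the paper. The shared skeleton is the same: both arguments reduce to producing $0\ne s\in A$ with $\Spec B_s\to \Spec A_s$ surjective and then invoke Lemma~\ref{lem:specsurjective}. The paper uses reducedness and Noetherianity only to pick $s$ lying in all minimal primes except one, so that $A_s$ is reduced with a single minimal prime and hence a domain, and then delegates the rest to \cite[Lemma~30]{Dima}, which supplies $t$ with $\Spec B_{st}\to \Spec A_{st}$ surjective. You instead work over all of $\Spec A$ at once: Chevalley's theorem makes the image $E$ constructible, reducedness forces every minimal prime $\p$ into $E$ (via $A_\p=\kappa(\p)\hookrightarrow B_\p$, the exact analogue of the paper's use of reducedness), and the specialization structure of constructible sets plus prime avoidance then yields $s$ with $D(s)\subset E$. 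What each buys: the paper's proof is shorter because the substantive content over a domain (essentially generic flatness) is outsourced to the cited lemma; yours is self-contained modulo Chevalley, treats all irreducible components symmetrically (your $D(s)$ meets every component, whereas the paper's localization discards all but one --- immaterial for the statement, which asks for a single $s$), and makes the role of reducedness completely explicit. The one step you should support with a citation is the claim that a minimal prime in $\overline{C}$ must lie in $C$: this rests on the fact that the closure of a constructible set in a Noetherian spectral space is the union of the closures of its points (EGA~IV,~1.10, or equivalently that a constructible set containing the generic point of an irreducible closed subset contains a dense open subset of it). That fact is false for arbitrary subsets --- the set of closed points of $\Spec\Z$ is dense but contains no generic point --- so constructibility is doing real work precisely there.
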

\begin{proof}
    There exists $s\in A$ such that $A_s$ is an integral domain. For instance, suppose that $(0)=\mathfrak{p}_1\cap\cdots\cap\mathfrak{p}_t$ is the representation of $(0)$ as
    the intersection of the finitely many minimal prime ideals in the Noetherian ring $A$. Let $s\in\mathfrak{p}_2\cap\cdots\cap\mathfrak{p}_t$ be such that $t\notin\mathfrak{p}_1$.
    Then, $A_s$ is a reduced ring with a single minimal prime ideal. Thus, it is integral.
    By \cite[Lemma~30]{Dima}, there exists $t \in A$ such that the map
$
\Spec B_{st} \to \Spec A_{st}
$
is surjective. The statement now follows from Lemma~\ref{lem:specsurjective}.
 \end{proof}

\begin{theorem}\label{thm:nonewconstants} Let $L$ be a Noetherian $\Sigma$-pseudofield with $C := L^\sigma$ being a $\Sigma_1$-closed pseudofield.
    Let $R$ be a $\Sigma_1$-finitely generated $\Sigma$-simple ring over $L$. Then
$
R^\sigma = C$.
\end{theorem}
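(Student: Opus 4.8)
The plan is to run the classical ``no new constants'' argument of Picard--Vessiot theory, replacing the usual transcendence step by a specialization argument that exploits both the Noetherian hypothesis (through Lemma~\ref{lem:s}) and the $\Sigma_1$-closedness of $C$. The inclusion $C = L^\sigma \subseteq R^\sigma$ is immediate, so I fix $a \in R^\sigma$ and aim to show $a \in C$. Since $\Sigma_1$ commutes with $\sigma$, the $\Sigma_1$-orbit of $a$ consists of $\sigma$-constants, so $E := C\{a\}_{\Sigma_1}$ is a finitely generated $\Sigma_1$-subalgebra of $R^\sigma$ on which $\sigma$ acts trivially. By Corollary~\ref{cor:tensor} (with $B = R$ and $C_L = C$) the multiplication map identifies $L\cdot E$ with $L\otimes_C E$, so $L\cdot E$ is a $\Sigma$-subring of $R$ that is a finitely generated $L$-algebra; because $\Sigma_1$ is finite, $R$ itself is a finitely generated $L$-algebra, hence Noetherian, and it is reduced since its nilradical is a $\Sigma$-ideal. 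Thus $L\cdot E\subseteq R$ satisfies the hypotheses of Lemma~\ref{lem:s}.

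The key idea is to specialize $a$ to a genuine element of $C$. Applying Lemma~\ref{lem:s} to $L\cdot E\subseteq R$ produces $0\neq s\in L\cdot E$ with $(\PSpec R)_s\to(\PSpec(L\cdot E))_s$ surjective. Suppose I can produce a $\Sigma_1$-homomorphism $\phi\colon E\to C$ over $C$ whose base change $\Phi := \mathrm{id}_L\otimes\phi\colon L\otimes_C E\to L$ satisfies $\Phi(s)\neq 0$. Writing $c_\tau := \phi(\tau a)\in C$, the kernel $\q := \ker\Phi$ is a pseudoprime ideal of $L\cdot E$ (Lemma~\ref{lem:preimage}) lying in $(\PSpec(L\cdot E))_s$ and containing every $\tau a - c_\tau$. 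By surjectivity there is a pseudoprime $\mathfrak{Q}\subsetneq R$ with $\mathfrak{Q}\cap(L\cdot E) = \q$. Now $J := \sum_{\tau\in\Sigma_1} R\,(\tau a - c_\tau)$ is a $\Sigma$-ideal of $R$: it is $\sigma$-stable because each $\tau a - c_\tau$ is a $\sigma$-constant, and $\Sigma_1$-stable because $\rho(\tau a - c_\tau) = (\rho\tau)a - c_{\rho\tau}$, using $\rho(c_\tau) = c_{\rho\tau}$ as $\phi$ is a $\Sigma_1$-homomorphism. Since $J\subseteq\mathfrak{Q}$ is proper, $\Sigma$-simplicity of $R$ forces $J = (0)$, whence $a = c_{\mathrm{id}}\in C$.

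The main obstacle, and the only place where $\Sigma_1$-closedness really enters, is producing the point $\phi$ avoiding the bad locus cut out by $s$: Lemma~\ref{lem:s} guarantees surjectivity of the $\PSpec$-map only away from the zero set of $s$, so an arbitrary $C$-point of $E$ need not lift. To handle this I would fix a $C$-basis of $L$ and expand $s = \sum_i l_i\otimes e_i$ with $e_i\in E$; since $s\neq 0$ and the $l_i$ are $C$-independent, some $e_{i_0}$ is nonzero, and $\Phi(s) = \sum_i l_i\,\phi(e_i)$ vanishes precisely when all $\phi(e_i) = 0$. As $E$ is reduced, being a subring of the reduced ring $R$, the $\Sigma_1$-closedness of $C$ yields, via the identity $\sqrt{I} = \I(\V(I))$ of Definition~\ref{def:diffclosedring}, a point of $\V$ on which $e_{i_0}$ does not vanish; the corresponding $\phi$ then satisfies $\Phi(s)\neq 0$.

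Checking that such a point does give a $\Sigma_1$-homomorphism $E\to C$ over $C$, and that $L$ is free over $C$ so that the basis expansion above is legitimate (which follows from the structure theory underlying Proposition~\ref{prop:26}), are the remaining routine verifications. I expect the genuinely delicate point to be the interaction between the generic nature of Lemma~\ref{lem:s} and the need for a point in general position, which is exactly what the abundance of points over a $\Sigma_1$-closed pseudofield is designed to supply.
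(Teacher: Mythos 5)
Your proposal is correct and takes essentially the same route as the paper's proof: Corollary~\ref{cor:tensor} identifies $L\{a\}$ with $L\otimes_C C\{a\}_{\Sigma_1}$, Lemma~\ref{lem:s} supplies $s$, freeness of $L$ over $C$ (Proposition~\ref{prop:26}) together with the $\Sigma_1$-closedness of $C$ produces a $C$-point $\phi$ with $\Phi(s)\neq 0$ (the paper cites \cite[Proposition~34]{Dima} where you invoke Definition~\ref{def:diffclosedring} directly), and Lemma~\ref{lem:preimage} plus the $\Sigma$-simplicity of $R$ finishes the argument. Your ending---lifting $\ker\Phi$ to a pseudoprime of $R$ and killing the $\Sigma$-ideal generated by the elements $\tau a - c_\tau$---is merely a repackaging of the paper's observation that, by surjectivity and $\PSpec R=\{(0)\}$, every nonzero pseudoprime of $L\{a\}$ contains $s$, so that $\ker\Phi=(0)$.
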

    \begin{proof}
    Let $b \in R^\sigma$. Since $|\Sigma_1|< \infty$, the ring $R$ is finitely generated
    over $L$. Since $R$ is $\Sigma$-simple, it is reduced. Therefore, the $\Sigma$-subring of $R$ generated by $L$ and $b$, denoted by $L\{b\}$,  is reduced as well.
    Hence, by Lemma~\ref{lem:s},
    there exists a non-nilpotent element $s \in L\{b\}$ such that the map
    $$
    (\PSpec R)_s \to (\PSpec L\{b\})_s
    $$
    is surjective. Therefore, since $\PSpec R = \{(0)\}$, every non-zero pseudoprime ideal in $L\{b\}$ contains $s$. By Corollary~\ref{cor:tensor}, we have $
    L\{b\} = L\otimes_C C\{b\}$.
    By Proposition~\ref{prop:26}, $L$ is a free $C$-module. Let $\{l_i\}_{i\in\mathcal{I}}$ be a $\Sigma_1$-invariant basis over $C$. Then there exist $r_1,\ldots, r_k \in C\{b\}$ such that
    $$
    s = l_1\otimes r_1+\ldots +l_k\otimes r_k.$$
    Since the ring $L\{b\}$ is reduced, $r_1$ is not nilpotent. Therefore, by \cite[Proposition~34]{Dima},
    there exists a maximal $\Sigma$-ideal $\m$ in $C\{b\}$ such that $C\{b\}/\m = C$ and  $r_1 \notin \m$.
    Let
    $$
    \varphi : L\{b\} = L\otimes_CC\{b\}\to L\otimes_CC\{b\}/\m = L\otimes _CC = L.
    $$
    Then,
    $$
    \varphi(s) = l_1\bar{r}_1+\ldots+l_k\bar{r}_k,$$
    where $\bar{r}_i$ are the images of $r_i$ modulo $\m$, $1\Le i\Le k$. Since $\{l_1,\ldots,l_k\}$ are linearly independent over $C$ and $\bar{r}_1\ne 0$,
    the ideal $L\otimes_C\m$ does not contain $s$. Since $\varphi$ is a $\Sigma$-homomorphism, $L\otimes_C\m = \varphi^{-1}((0))$, and $(0)$ is a pseudoprime ideal in $L$,
    the ideal $L\otimes_C \m$ is pseudoprime by Lemma~\ref{lem:preimage}. Therefore, $L\otimes_C \m = (0)$ by the above. Thus, we see that $b \in C$ by taking $\sigma$-invariants as $\varphi$ is an injective $\Sigma$-homomorphism.
 \end{proof}

Recall that an idempotent that is not a sum of several distinct orthogonal idempotents is called  indecomposable.

\begin{proposition} Let $L$ be a Noetherian $\Sigma$-pseudofield and let $F = L/\m$, where $\m$ is a maximal ideal in $L$. Then,
$
L \iso F\times\ldots\times F$.
Moreover, $\Sigma$ acts transitively on the set of indecomposable idempotents of $L$.
\end{proposition}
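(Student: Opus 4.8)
The plan is to establish the decomposition in two stages: first showing that $L$ is a finite product of copies of the field $F = L/\m$, and then showing that $\Sigma$ permutes the indecomposable idempotents transitively. Since $L$ is a Noetherian $\Sigma$-pseudofield, it is absolutely flat, hence von Neumann regular and reduced; a reduced absolutely flat Noetherian ring is a finite product of fields. Indeed, the absolutely flat condition forces $\dim L = 0$, and Noetherianity together with reducedness means $L$ is a finite product of Artinian local rings that are fields, so $L \cong F_1\times\ldots\times F_n$ where each $F_j = L/\m_j$ for the finitely many maximal ideals $\m_j$ of $L$. The first task is therefore to show that all these residue fields $F_j$ are isomorphic as abstract fields.

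The mechanism that makes the $F_j$ all isomorphic is the transitivity of the $\Sigma$-action on the maximal ideals, so I would prove the ``moreover'' clause first and use it. The key point is that $L$ being a $\Sigma$-pseudofield means it is $\Sigma$-simple: it has no proper nonzero $\Sigma$-ideals. Now the indecomposable idempotents of $L$ correspond bijectively to its maximal ideals (equivalently, to the factors $F_j$): each indecomposable idempotent $e_j$ is the identity of the factor $F_j$ and $\m_j = \ker(L\to F_j)$. The group $\Sigma$ acts by automorphisms, hence permutes the complete set of orthogonal indecomposable idempotents $\{e_1,\ldots,e_n\}$, and correspondingly permutes the maximal ideals. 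First I would show this action is transitive: if it were not, the orbit of some $e_j$ would be a proper nonempty subset, and the sum $e = \sum_{\tau\in\Sigma}\tau(e_j)$ over the orbit (counted without repetition) would be a nonzero $\Sigma$-invariant idempotent that is not equal to $1$. Then $eL$ would be a proper nonzero $\Sigma$-ideal of $L$, contradicting $\Sigma$-simplicity. Hence $\Sigma$ acts transitively on the indecomposable idempotents, and therefore transitively on the maximal ideals.

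Given transitivity, the isomorphism $L\cong F\times\ldots\times F$ follows: for each $j$ there is some $\tau\in\Sigma$ with $\tau(\m_1)=\m_j$, and since $\tau$ is a ring automorphism of $L$ it induces a field isomorphism $F_1 = L/\m_1 \xrightarrow{\ \sim\ } L/\m_j = F_j$. Thus all the residue fields are isomorphic to $F := L/\m$, and the product decomposition $L\cong F^n$ follows from the structure of reduced Artinian rings. The main obstacle I anticipate is being careful about the orbit-sum argument in the non-faithful or repeated case: one must sum over the distinct idempotents in the $\Sigma$-orbit of $e_j$ rather than over all group elements, so that the resulting $e$ is a genuine idempotent and not merely an element divisible by the orbit size (which matters in positive characteristic, where the orbit size could be a multiple of the characteristic). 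Once that bookkeeping is handled, the $\Sigma$-invariance of $e$ and the resulting contradiction with $\Sigma$-simplicity are immediate, and everything else reduces to standard commutative algebra of absolutely flat Noetherian rings.
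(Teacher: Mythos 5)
Your proof is correct and follows essentially the same route as the paper: both decompose $L$ as a finite product of fields via the standard commutative algebra of reduced, zero-dimensional Noetherian (hence Artinian) rings, and both obtain transitivity from the orbit-sum of an indecomposable idempotent being a $\Sigma$-invariant idempotent, which must equal $1$ by $\Sigma$-simplicity. The only difference is ordering — you prove transitivity on idempotents first and deduce the isomorphism of residue fields from it, which has the minor merit of making explicit the paper's unproved assertion that $\Sigma$ acts transitively on $\Spec L$.
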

\begin{proof}
Since the ring $L$ is Noetherian and $\dim L = 0$, by \cite[Theorem~8.5]{AM}, the ring $L$ is Artinian. Therefore, by \cite[Theorem~VII.7]{AM}, it is a finite product of local Artinian rings.
Since $L$ is reduced, by \cite[Proposition~VIII.1]{AM},
\begin{equation}\label{eq:decomposition}
L = F_1\times\ldots\times F_n,
\end{equation}
where $F_i$ is a field, $1\Le i\Le n$. Since $L$ is $\Sigma$-simple, the group $\Sigma$ acts transitively on $\Spec L$.
Therefore, $F_i \cong F_1$, $1\Le i\Le n$, as residue fields. Let $e$ be an indecomposable idempotent in $L$. Let $\Orb_\Sigma(e) = \{e_1,\ldots,e_k\}$. Then, the idempotent
$$
E := e_1+\ldots+e_k$$
is $\Sigma$-invariant. Since $L$ is $\Sigma$-simple, we have $E = 1$. Decomposition~\eqref{eq:decomposition} implies that $L$ has $n$ indecomposable idempotents, each one is of the form
$$
(0,\ldots,0,1,0,\ldots,0)$$
and, thus, $k = n$ and $\Sigma$ acts transitively on the set of indecomposable idempotents of $L$.
 \end{proof}

Let $B$ be a $\Sigma_0$-ring and let
\begin{equation}\label{eq:defofFSigma}
F_{\Sigma_1}(B) = \prod_{\mu \in \Sigma_1}B = \{f : \Sigma_1\to B\},
\end{equation}
which is a $\Sigma_0$-ring with the component-wise action of $\Sigma_0$. Define
$$
(\mu f) (\tau) = f\left(\mu^{-1}\tau\right),\quad f\in F_{\Sigma_1}(B),\ \ \mu,\tau\in\Sigma_1.$$
The above makes $F_{\Sigma_1}(B)$ a $\Sigma$-ring. For every $\mu\in\Sigma_1$ define a $\Sigma_0$-homomorphism
\begin{equation}\label{eq:gammamu}
\gamma_\mu : F_{\Sigma_1}(B) \to B,\quad f\mapsto f(\mu).
\end{equation}
Moreover, we have $$\gamma_\tau(\mu f) = (\mu f)(\tau) = f\left(\mu^{-1}\tau\right) = \gamma_{\mu^{-1}\tau}(f).$$

\begin{proposition}\label{prop:10} Let $A$ be a $\Sigma$-ring, $B$ be a $\Sigma_0$-ring, and $\varphi : A \to B$ be a $\Sigma_0$-homomorphism. Then for every $\mu \in \Sigma$ there exists unique $\Sigma$-homomorphism $\Phi_\mu : A \to F_{\Sigma_1}(B)$ such that the following diagram
$$
\xymatrix{
                                    & {F_{\Sigma_1}(B)}\ar[d]^{\gamma_{\mu}} \\
        A\ar[r]^{\varphi}\ar[ur]^{\Phi_\mu} & B
}
$$
is commutative.
\end{proposition}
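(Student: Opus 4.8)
The statement is a coinduction-type adjunction, and the only ingredients it needs are that $\Sigma=\Sigma_0\oplus\Sigma_1$ is abelian and that $\varphi$ is $\Sigma_0$-equivariant; everything else is bookkeeping with the two commuting actions. The plan is to first derive the formula that any such $\Phi_\mu$ must satisfy, which gives uniqueness outright, and then to take that formula as a definition and verify directly that it produces a $\Sigma$-homomorphism making the triangle commute.

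To see what $\Phi_\mu$ is forced to be, I would suppose $\Phi\colon A\to F_{\Sigma_1}(B)$ is a $\Sigma$-homomorphism with $\gamma_\mu\circ\Phi=\varphi$, fix $a\in A$ and $\nu\in\Sigma_1$, and use that $\Phi$ is $\Sigma_1$-equivariant, so $\Phi(\nu a)=\nu\Phi(a)$. Applying $\gamma_\mu$ and invoking the identity $\gamma_\mu(\nu f)=\gamma_{\nu^{-1}\mu}(f)$ (the relation recorded just before the proposition, with its indices renamed) yields
\[
\varphi(\nu a)=\gamma_\mu\bigl(\Phi(\nu a)\bigr)=\gamma_\mu\bigl(\nu\Phi(a)\bigr)=\gamma_{\nu^{-1}\mu}\bigl(\Phi(a)\bigr)=\Phi(a)\bigl(\nu^{-1}\mu\bigr).
\]
Writing $\tau=\nu^{-1}\mu$, i.e. $\nu=\mu\tau^{-1}$, and using that $\Sigma_1$ is abelian, this reads $\Phi(a)(\tau)=\varphi\bigl((\tau^{-1}\mu)\,a\bigr)$ for every $\tau\in\Sigma_1$. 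Hence $\Phi$ is uniquely determined, which settles uniqueness and simultaneously tells us how to define $\Phi_\mu$.

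For existence I would take the formula just obtained as the definition, $\Phi_\mu(a)(\tau):=\varphi\bigl((\tau^{-1}\mu)\,a\bigr)$, and check the required properties. That each $\Phi_\mu(a)$ lies in $F_{\Sigma_1}(B)$ and that $a\mapsto\Phi_\mu(a)$ is a ring homomorphism are immediate, since $\varphi$ is a ring homomorphism and $\Sigma$ acts on $A$ by ring automorphisms. For $\Sigma_1$-equivariance one computes, for $\nu\in\Sigma_1$, that $\Phi_\mu(\nu a)(\tau)=\varphi(\tau^{-1}\mu\nu\,a)$ while $(\nu\Phi_\mu(a))(\tau)=\Phi_\mu(a)(\nu^{-1}\tau)=\varphi(\tau^{-1}\nu\mu\,a)$, and these agree because $\Sigma$ is abelian. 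For $\Sigma_0$-equivariance one uses, for $\sigma\in\Sigma_0$, that $\Phi_\mu(\sigma a)(\tau)=\varphi(\tau^{-1}\mu\sigma\,a)=\varphi(\sigma\,\tau^{-1}\mu\,a)=\sigma\,\varphi(\tau^{-1}\mu\,a)$, the last step since $\varphi$ intertwines $\Sigma_0$; as $\Sigma_0$ acts componentwise on $F_{\Sigma_1}(B)$ this equals $(\sigma\Phi_\mu(a))(\tau)$. Finally, commutativity of the triangle is the single evaluation $\gamma_\mu(\Phi_\mu(a))=\Phi_\mu(a)(\mu)=\varphi(\mu^{-1}\mu\,a)=\varphi(a)$.

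I expect no serious obstacle here: the whole argument is the interplay of the $\Sigma_0$- and $\Sigma_1$-actions, which succeeds precisely because $\Sigma$ is abelian and $\varphi$ is $\Sigma_0$-equivariant. The one point needing attention is a nontrivial $\Sigma_0$-component of $\mu$: writing $\mu=\sigma^n\bar\mu$ with $\bar\mu\in\Sigma_1$ and evaluating $\gamma_\mu$ through the componentwise $\Sigma_0$-action on $F_{\Sigma_1}(B)$, the identity $\gamma_\mu(\nu f)=\gamma_{\nu^{-1}\mu}(f)$ and all of the computations above persist unchanged, the $\Sigma_0$-twist built into $\gamma_\mu$ being exactly what preserves the normalization $\gamma_\mu\circ\Phi_\mu=\varphi$.
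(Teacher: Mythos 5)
Your proof is correct and follows essentially the same route as the paper's: both derive the forced formula $\Phi_\mu(a)(\tau)=\varphi\left(\mu\tau^{-1}a\right)$ from the commutativity of the triangle and $\Sigma$-equivariance (which settles uniqueness), then take that formula as the definition and verify $\Sigma_1$-equivariance using that $\Sigma$ is abelian and $\Sigma_0$-equivariance using that $\varphi$ is a $\Sigma_0$-homomorphism acting componentwise on $F_{\Sigma_1}(B)$. Your explicit check of the triangle identity $\gamma_\mu\circ\Phi_\mu=\varphi$ and your closing remark on a nontrivial $\Sigma_0$-component of $\mu$ are slightly more careful than the paper, which leaves both points implicit.
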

\begin{proof}
Since $$
\Phi_\mu(a)\left(\tau^{-1}\mu\right)=(\tau\Phi_\mu(a))(\mu)=\varphi(\tau a),$$
where $a\in A$ and $\tau\in\Sigma$, the homomorphism $\Phi_\mu$ is unique if it exists. Define
$$
\Phi_\mu(a)(\tau) = \varphi\left(\mu\tau^{-1}a\right).$$
For every $\alpha \in \Sigma_1$ we have
\begin{align*}
\Phi_\mu(\alpha a)(\tau) &= \varphi\big(\mu\tau^{-1}\alpha a\big) = \varphi\big(\mu\big(\alpha^{-1}\tau\big)^{-1} a\big) =\Phi_\mu(a)\left(\alpha^{-1}\tau\right)
=(\alpha\Phi_\mu(a))(\tau)\\
\Phi_\mu(\nu a)(\tau)&= \varphi\left(\mu\tau^{-1}\nu a\right) = \nu\left(\varphi\left(\mu\tau^{-1}a\right)\right)=\nu(\Phi_\mu(a)(\tau))=\nu(\Phi_\mu(a))(\tau)
\end{align*}
for all $\alpha,\tau \in \Sigma_1,$ $\nu \in \Sigma_0$, and $a \in A$. Thus, $\Phi_\mu$ is a $\Sigma$-homomorphism.
 \end{proof}

\begin{proposition}\label{prop:11} Let $L$ be a Noetherian $\Sigma$-pseudofield such that $L^\sigma$ is a $\Sigma_1$-closed pseudofield.
    Then, there exists a Noetherian $\Sigma_0$-pseudofield $B$ such that $L \iso F_{\Sigma_1}(B)$.
\end{proposition}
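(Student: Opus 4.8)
The plan is to reduce everything to producing a single $\sigma$-invariant idempotent $\delta\in L$ whose $\Sigma_1$-orbit is a complete orthogonal system, to set $B:=\delta L$, and then to use the universal property of Proposition~\ref{prop:10} to identify $L$ with $F_{\Sigma_1}(B)$. The heuristic is that $F_{\Sigma_1}(B)=\prod_{\mu\in\Sigma_1}B$ is exactly the situation in which $\Sigma_1$ permutes a distinguished family of $|\Sigma_1|$ orthogonal idempotents simply transitively, so recognizing $L$ as such an $F_{\Sigma_1}(B)$ amounts to exhibiting that family inside $L$.

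First I would produce $\delta$. Write $C:=L^\sigma$, which is $\Sigma_1$-closed by hypothesis. Consider the $\Sigma_1$-ideal $I\subset C\{y\}_{\Sigma_1}$ generated by the relations $y^2-y$, the products $(\mu y)(\nu y)$ for $\mu\neq\nu$ in $\Sigma_1$, and $\sum_{\mu\in\Sigma_1}\mu y-1$; a point of $\V(I)$ lying in $C$ is precisely an idempotent $\delta$ whose orbit $\{\mu\delta\}_{\mu\in\Sigma_1}$ consists of $|\Sigma_1|$ pairwise orthogonal idempotents summing to $1$. These relations have integer coefficients, and in the $\Sigma_1$-$C$-algebra $\prod_{\mu\in\Sigma_1}C$ (with translation action and structure map $c\mapsto(\mu^{-1}c)_\mu$, a $\Sigma_1$-homomorphism) the indicator $\delta_0$ of $\id\in\Sigma_1$ satisfies all of them. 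This gives a $\Sigma_1$-homomorphism out of $C\{y\}_{\Sigma_1}/I$ to a nonzero ring, whence $I\neq(1)$; difference closedness then gives $\sqrt I=\I(\V(I))$, forcing $\V(I)\neq\varnothing$ and producing the desired $\delta\in C$. Forcing a \emph{free} $\Sigma_1$-orbit of idempotents is the step where difference closedness is genuinely used, and I expect it to be the main obstacle: Proposition~\ref{prop:26} by itself is not enough, since a faithful Galois action of $\Sigma_1$ on a field also realizes its matrix conclusion while admitting no such $\delta$.

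With $\delta$ in hand, $\{\mu\delta\}_{\mu\in\Sigma_1}$ is a complete orthogonal system, so $L=\prod_{\mu\in\Sigma_1}(\mu\delta)L$ as rings, and since $\delta\in L^\sigma$ the factor $B:=\delta L$ is $\sigma$-stable, hence a $\Sigma_0$-ring. I would then apply Proposition~\ref{prop:10} to the $\Sigma_0$-homomorphism $\varphi:L\to B$, $a\mapsto\delta a$, obtaining the $\Sigma$-homomorphism $\Phi:=\Phi_{\id}:L\to F_{\Sigma_1}(B)$ with $\Phi(a)(\tau)=\varphi(\tau^{-1}a)=\delta\,\tau^{-1}(a)$. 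Writing $a=\sum_\mu a_\mu$ with $a_\mu=(\mu\delta)a$ and using orthogonality, only the term $\mu=\tau$ survives, giving $\Phi(a)(\tau)=\tau^{-1}\big((\tau\delta)a\big)\in\delta L=B$. Since each $\tau^{-1}\colon(\tau\delta)L\to\delta L=B$ is a ring isomorphism, $\Phi$ is a bijection with inverse $(b_\tau)_\tau\mapsto\sum_\tau\tau(b_\tau)$, and therefore an isomorphism of $\Sigma$-rings.

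It then remains to verify that $B$ is a Noetherian $\Sigma_0$-pseudofield. It is Noetherian as a direct factor of the Noetherian ring $L$, and absolutely flat because $L\cong B\times(1-\delta)L$ and absolute flatness passes to direct factors. For $\Sigma_0$-simplicity, given a nonzero proper $\sigma$-ideal $J\subsetneq B$, the sum $\bigoplus_{\mu\in\Sigma_1}\mu(J)$ is a nonzero proper $\Sigma$-ideal of $L$: its summands lie in the distinct factors $(\mu\delta)L$, $\sigma$ preserves each (as $\sigma$ commutes with $\Sigma_1$ and fixes $J$), and $\Sigma_1$ permutes them, so the sum is $\Sigma$-stable and nontrivial, contradicting the $\Sigma$-simplicity of $L$. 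Hence $B$ is a Noetherian $\Sigma_0$-pseudofield and $L\iso F_{\Sigma_1}(B)$, as required.
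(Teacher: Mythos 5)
Your proof is correct, and its skeleton is the paper's: produce an idempotent in $C=L^\sigma$ whose $\Sigma_1$-orbit is a complete orthogonal system, set $B=\delta L$, lift the projection $L\to B$ to a $\Sigma$-homomorphism $L\to F_{\Sigma_1}(B)$ via Proposition~\ref{prop:10}, and verify that $B$ is a Noetherian $\Sigma_0$-pseudofield. The differences lie in the two key steps, and they are worth recording. First, the paper gets its idempotent from the structure theorem for $\Sigma_1$-closed pseudofields, \cite[Theorem~17(4)]{Dima}, which gives $C\cong F_{\Sigma_1}(K)$ with $K$ an algebraically closed field, and takes $e=\delta_{\id}$; you instead encode ``idempotent whose translates are pairwise orthogonal and sum to $1$'' as a $\Sigma_1$-ideal $I\subset C\{y\}_{\Sigma_1}$, show $I\ne(1)$ by mapping $y$ to the indicator of $\id$ in the Taylor-type $\Sigma_1$-algebra $F_{\Sigma_1}(C)$, and then invoke Definition~\ref{def:diffclosedring} to produce a genuine zero of $I$ in $C$. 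This is more self-contained---it uses only the Nullstellensatz property defining difference closedness rather than the classification of closed pseudofields---and your remark that Proposition~\ref{prop:26} alone could not suffice (a faithful Galois action of $\Sigma_1$ on a field satisfies its conclusion yet admits no nontrivial idempotents) correctly identifies why some such input is unavoidable. Second, for the isomorphism itself the paper proves injectivity from $\Sigma$-simplicity of $L$ and surjectivity by chasing indecomposable idempotents of $F_{\Sigma_1}(B)$ (a computation which, as printed, lands on $\delta_\tau\cdot\bigl(e\tau^{-1}(h)\bigr)$, so the preimage of $\delta_\tau\cdot f$ should really be $\tau(f)$ rather than $\tau(e)h$), whereas you exhibit the explicit two-sided inverse $(b_\tau)_\tau\mapsto\sum_\tau\tau(b_\tau)$; this is cleaner, needs no simplicity at that step, and sidesteps the glitch. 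Your $\Sigma_0$-simplicity argument via $\bigoplus_\mu\mu(J)$ is the paper's argument (functions with image in a given $\Sigma_0$-ideal) transported along the isomorphism, so there the two proofs coincide. What the paper's route buys is brevity and the extra information that the residue field of $B$ is algebraically closed; what yours buys is independence from \cite[Theorem~17(4)]{Dima}, which would matter in any setting where only the definitional Nullstellensatz property of $C$ is available.
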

\begin{proof}
By \cite[Theorem~17(4)]{Dima}, there exists an algebraically closed field $K$ such that
$$
L^\sigma = F_{\Sigma_1}(K).$$ Let $\delta_\tau \in F_{\Sigma_1}(K)$ be the indicator of the point $\tau \in \Sigma_1$ and
$e = \delta_{\id}$.
Let also
$
B = eL$,
which is a Noetherian absolutely flat ring as a quotient of a Noetherian $\Sigma$-pseudofield.
By
Proposition~\ref{prop:10}, the homomorphism $
L \to B,$ with $a \mapsto e\cdot a$,
lifts to a unique $\Sigma$-homomorphism $$\phi: L \to F_{\Sigma_1}(B).$$
Since $L$ is $\Sigma$-simple,  $\phi$ is injective. To show that $\phi$ is surjective, we will prove that $\phi(L)$ contains all indecomposable idempotents of $F_{\Sigma_1}(B)$. Every indecomposable idempotent of the ring $F_{\Sigma_1}(B)$ is of the form
$\delta_\tau\cdot f$,
$f$ is an indecomposable idempotent of $B$. Let $f = eh$, where $h\in L$. Since $$\phi(\tau(e)h)(\nu) = (e\tau(e)h)(\nu)=(\tau(e)f)(\nu) = e\left(\tau^{-1}\nu\right)f=\delta_\tau(\nu) f,$$ we are done.
Finally, $B$ is $\Sigma_0$-simple. Indeed,  let $\ib \subset B$ be
a $\Sigma_0$-ideal. Let $I \subset F_{\Sigma_1}(B)$
consist of all functions $f$ with image contained in $\ib$.
Since $I$ is an ideal and $\Sigma_1$ is acting on the
domain, $I$ is invariant under the $\Sigma_1$-action.
Since $\ib$ is a $\Sigma_0$ ideal, then $I$  is a $\Sigma_0$-ideal as well. Therefore, $I$ is a $\Sigma$-ideal, which contradicts to $L$ being a
pseudofield.
 \end{proof}

\begin{proposition}\label{prop:product} Let $L$ be a Noetherian $\Sigma$-pseudofield such that $L^\sigma$ is a $\Sigma_1$-closed pseudofield. Then,
$$
L \cong \prod_{i=1}^n F_{\Sigma_1}(F)
$$
as $\Sigma_1$-rings, where $F$ is a field.
\end{proposition}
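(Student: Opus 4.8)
The plan is to invoke Proposition~\ref{prop:11} to replace $L$ by a concrete model and then unwind its $\Sigma_1$-structure. By that proposition there is a Noetherian $\Sigma_0$-pseudofield $B$ together with a $\Sigma$-isomorphism $L \cong F_{\Sigma_1}(B)$; forgetting the $\Sigma_0$-action, this is in particular an isomorphism of $\Sigma_1$-rings, so it suffices to analyse $F_{\Sigma_1}(B)$ as a $\Sigma_1$-ring. The key observation I would record at the outset is that the $\Sigma_1$-action $(\mu f)(\tau)=f\left(\mu^{-1}\tau\right)$ only reindexes the domain $\Sigma_1$ and never touches $B$, so only the bare ring structure of $B$, not its $\Sigma_0$-action, will be relevant.

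Next I would decompose $B$ into a product of copies of a single field. Since $B$ is absolutely flat, it is reduced and of Krull dimension zero; being Noetherian, it is Artinian by \cite[Theorem~8.5]{AM} and hence a finite product of local Artinian rings by \cite[Theorem~VII.7]{AM}, which, being reduced, are fields. This gives $B\cong F_1\times\ldots\times F_n$ with each $F_i$ a field, exactly as in the proof of the earlier proposition decomposing a Noetherian pseudofield into a product of fields. I would then rerun the transitivity argument used there, now with $\Sigma_0$ in place of $\Sigma$: the group $\Sigma_0$ permutes the indecomposable idempotents of $B$, the sum over any $\Sigma_0$-orbit is a $\Sigma_0$-invariant idempotent and hence equals $1$ by $\Sigma_0$-simplicity, so $\Sigma_0$ acts transitively on the factors and all the $F_i$ are isomorphic to a single field $F$.

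Finally I would produce the $\Sigma_1$-equivariant splitting. Writing a function $f\colon\Sigma_1\to F_1\times\ldots\times F_n$ in components $f=(f_1,\ldots,f_n)$ with $f_i\colon\Sigma_1\to F_i$ identifies $F_{\Sigma_1}(B)$ with $\prod_{i=1}^n F_{\Sigma_1}(F_i)$ as rings, and the $i$th component of $\mu f$ is $\tau\mapsto f_i\left(\mu^{-1}\tau\right)=(\mu f_i)(\tau)$, so this identification is $\Sigma_1$-equivariant. Combining it with $F_i\cong F$ yields $L\cong F_{\Sigma_1}(B)\cong\prod_{i=1}^n F_{\Sigma_1}(F)$ as $\Sigma_1$-rings, as desired. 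I expect the only genuine obstacle to be the product-of-isomorphic-fields structure of $B$, that is, checking that the $\Sigma_0$-orbit argument forces the factors to be mutually isomorphic; once that is in hand, the passage through $F_{\Sigma_1}$ is purely formal precisely because the $\Sigma_1$-action ignores the codomain.
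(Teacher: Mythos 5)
Your proof is correct and takes essentially the same route as the paper's: both invoke Proposition~\ref{prop:11} to write $L \cong F_{\Sigma_1}(B)$, decompose $B$ along its indecomposable idempotents into mutually isomorphic fields, and identify $F_{\Sigma_1}(F_1\times\ldots\times F_n)$ with $\prod_{i=1}^n F_{\Sigma_1}(F_i)$ $\Sigma_1$-equivariantly. The only difference is that you spell out the Artinian decomposition of $B$ and the $\Sigma_0$-orbit argument forcing $F_i\cong F_1$, steps the paper leaves implicit.
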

\begin{proof}
By Proposition~\ref{prop:11},
$
L = F_{\Sigma_1}(B)$,
where $B$ is a Noetherian $\Sigma_0$-pseudofield. Let $f_1,\ldots, f_n$ be all
indecomposable idempotents of $B$. Then $$
L = f_1L\times\ldots\times f_nL.$$
On the other hand,
$$
f_iF_{\Sigma_1}(B) = F_{\Sigma_1}(f_iB)=F_{\Sigma_1}(F_i),$$
where $F_i = f_i B$ and $F_1 \cong F_i$, $1\Le i \Le n$.
 \end{proof}

\begin{proposition}\label{prop:Noetherian} Let $L$ be a Noetherian $\Sigma$-pseudofield and $K \subset L$ be a $\Sigma$-pseudofield as well. Then $K$ is Noetherian.
\end{proposition}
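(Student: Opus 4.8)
The plan is to reduce the whole question to a count of idempotents. Since $K$ is a $\Sigma$-pseudofield, it is absolutely flat, so by \cite[Exercise~II.27]{AM} every principal ideal of $K$ is generated by an idempotent. In fact every \emph{finitely generated} ideal of $K$ is generated by a single idempotent: for idempotents $e,f$ the element $g=e+f-ef$ is again idempotent and satisfies $ge=e$, $gf=f$, and $g\in(e,f)$, so $(e,f)=(g)$; iterating this reduces any finite set of generators (each of which can first be replaced by an idempotent generating the same principal ideal) to a single idempotent. Moreover distinct idempotents generate distinct ideals, since $(e)=(f)$ forces $e=ef=f$. Hence the finitely generated ideals of $K$ are exactly the ideals $(e)$ with $e=e^2\in K$, and they are in bijection with the idempotents of $K$.

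Next I would bring in the hypothesis that $L$ is Noetherian. By the preceding proposition (the decomposition of a Noetherian $\Sigma$-pseudofield), $L\cong F\times\ldots\times F$ is a finite product of copies of a field $F$, and such a ring has only finitely many idempotents. Because $K\subset L$ is a subring, every idempotent of $K$ is in particular an idempotent of $L$, so $K$ too has only finitely many idempotents. Combining this with the previous paragraph, $K$ has only finitely many finitely generated ideals.

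To finish, I would invoke the standard directed-union argument. An arbitrary ideal $I\subseteq K$ equals the union of its finitely generated subideals; this family is directed, since the sum of two finitely generated subideals of $I$ is again a finitely generated subideal of $I$, and it is finite, being contained in the finite set of all finitely generated ideals of $K$. A finite directed family has a greatest member, which must then coincide with $I$, so $I$ is finitely generated. As $I$ was arbitrary, $K$ is Noetherian.

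The only delicate points are the two purely ring-theoretic facts about absolutely flat rings used in the first paragraph, namely that a finitely generated ideal is generated by one idempotent and that distinct idempotents give distinct ideals; both are short computations. The conceptual heart of the argument, that passing to a subring cannot create new idempotents and hence cannot create new finitely generated ideals, is immediate, so I do not anticipate a serious obstacle; the main thing to be careful about is to use absolute flatness of $K$ (not of $L$) to guarantee that all finitely generated ideals are idempotent-generated.
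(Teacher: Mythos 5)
Your proof is correct, but it reaches the conclusion by a different route than the paper. Both arguments hinge on the same key observation: every idempotent of $K$ is an idempotent of $L$, and $L$, being a Noetherian $\Sigma$-pseudofield, is a finite product of fields and hence has only finitely many idempotents. From there the paper argues structurally: it shows that for an indecomposable idempotent $f$ of an absolutely flat ring the ring $fK$ is a field, and then uses the $\Sigma$-simplicity of $K$ to see that the sum of the finitely many indecomposable idempotents of $K$ is a $\Sigma$-invariant idempotent and hence equals $1$; this exhibits $K$ itself as a finite product of fields, which is the normal form for Noetherian pseudofields used repeatedly elsewhere in the paper. You instead prove a purely ring-theoretic statement: in an absolutely flat ring the finitely generated ideals are in bijection with the idempotents (via the join $e+f-ef$), so $K$ has only finitely many finitely generated ideals, and the directed-union argument then shows every ideal is finitely generated. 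Your route is more elementary and slightly more general --- it never invokes the $\Sigma$-action on $K$, not even $\Sigma$-simplicity, and in effect shows that any absolutely flat subring of a ring with finitely many idempotents is Noetherian --- while the paper's route buys the explicit decomposition $K \cong f_1K\times\ldots\times f_kK$ into fields, which is the form in which the result is exploited in the rest of the paper.
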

\begin{proof} Note that a pseudofield is Noetherian if and only if it contains a finite set of indecomposable idempotents $e_1,\ldots, e_n$ with
\begin{equation}\label{eq:indecomposables}
e_1+\ldots+e_n = 1.
\end{equation}
Necessity has been discussed above. To show sufficiency, note that if $e$ is an indecomposable idempotent of an absolutely flat ring $R$, then $eR$ is a field. Indeed, $eR$ is an
absolutely flat ring without nontrivial idempotents \cite[Exercise~II.27]{AM}. Moreover, for every
element $x\in R$ we have $x=ax^2$. Therefore, $ax$ is an idempotent. So,
either $ax=0$ and, thus, $x=ax^2=0$, or $ax=1$.
Hence, equality~\eqref{eq:indecomposables} implies that $R$ is finite product of fields and, therefore, is Noetherian.

Thus, since every idempotent of $K$ is an idempotent of $L$, which is Noetherian, the ring $K$ has finitely many indecomposable idempotents $f_1,\ldots, f_k$.
Since $f_1+\ldots+f_k$ is left fixed by $\Sigma$, we have
$
f_1+\ldots+f_k =1$. Again, by the above, the ring $K$ is Noetherian.
 \end{proof}

\begin{proposition} Let $L$ be a $\Sigma$-field such that the subfield $C := L^\sigma$ is algebraically closed. Then there exists a $\Sigma$-pseudofield $A$ and a $\Sigma$-embedding $\varphi : L \to A$ such that $A^\sigma$
is the $\Sigma_1$-closure of the $\Sigma_1$-field $\varphi(C)$.
\end{proposition}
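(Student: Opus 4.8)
The plan is to avoid enlarging $L$ at all: because $\Sigma_1$ is finite and $C$ is algebraically closed, I expect that the $\Sigma_1$-closure of $C$ is already $F_{\Sigma_1}(C)$, so that no transcendental extension of the constant field is needed and one may simply take $A := F_{\Sigma_1}(L)$ with $\varphi$ the adjunction map of Proposition~\ref{prop:10}.

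The first step is to pin down the $\Sigma_1$-closure of $C$. Since $\Sigma_1$ is a finite group acting on the field $C$ by automorphisms, Artin's theorem shows that $C/C^{\Sigma_1}$ is a finite Galois extension; as $C$ is algebraically closed this forces $C=\overline{C^{\Sigma_1}}$. By the classification of $\Sigma_1$-closed pseudofields \cite[Theorem~17]{Dima} (already used in Proposition~\ref{prop:11}), every $\Sigma_1$-closed pseudofield has the form $F_{\Sigma_1}(K)$ with $K$ algebraically closed, and the $\Sigma_1$-closure of $C$ is obtained with $K=\overline{C^{\Sigma_1}}=C$. Thus the $\Sigma_1$-closure of $C$ is $F_{\Sigma_1}(C)$, with $C$ embedded by $\iota(c)(\tau)=\tau^{-1}(c)$.

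Next I would set $A:=F_{\Sigma_1}(L)$, a $\Sigma$-ring via the component-wise $\Sigma_0$-action and the $\Sigma_1$-action $(\mu f)(\tau)=f(\mu^{-1}\tau)$, and take $\varphi:=\Phi_{\id}\colon L\to F_{\Sigma_1}(L)$, the unique $\Sigma$-homomorphism lifting $\id_L$ supplied by Proposition~\ref{prop:10}. It then remains to verify three points. First, $A$ is a $\Sigma$-pseudofield: it is absolutely flat since it is a finite product of copies of the field $L$, and it is $\Sigma$-simple because $\Sigma$-ideals of $F_{\Sigma_1}(L)$ correspond to $\sigma$-ideals of $L$ exactly as in the final paragraph of the proof of Proposition~\ref{prop:11}, while $L$ is $\sigma$-simple. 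Second, $\varphi$ is injective, being a nonzero $\Sigma$-homomorphism out of a field. Third, the constants are correct: since $\sigma$ acts component-wise, $A^\sigma=F_{\Sigma_1}(L)^\sigma=F_{\Sigma_1}(L^\sigma)=F_{\Sigma_1}(C)$, and the restriction of $\varphi$ to $C$ is $c\mapsto(\tau^{-1}c)_{\tau}$, which is precisely the embedding $\iota$ exhibiting $F_{\Sigma_1}(C)$ as the $\Sigma_1$-closure of $C$ from the first step; hence $A^\sigma$ is the $\Sigma_1$-closure of $\varphi(C)$.

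The main obstacle is the first step: identifying the $\Sigma_1$-closure of $C$ with $F_{\Sigma_1}(C)$, that is, proving that the constant field need not be enlarged beyond $C$. This is exactly where the hypothesis that $C$ is algebraically closed enters, through Artin's theorem, and where one must invoke Dima's classification of $\Sigma_1$-closed pseudofields. Once that identification is in hand, the construction $A=F_{\Sigma_1}(L)$ together with the verifications above are routine applications of Propositions~\ref{prop:10} and~\ref{prop:11}; I do not expect any difficulty there.
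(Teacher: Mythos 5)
Your proposal is correct and is essentially the paper's own proof: the paper likewise sets $A = F_{\Sigma_1}(L)$, takes $\varphi$ to be the Taylor homomorphism $\Phi_{\id}$ supplied by Proposition~\ref{prop:10}, and observes that $A^\sigma = F_{\Sigma_1}(C)$ is the $\Sigma_1$-closure of $C$, citing \cite{Dima} for this last fact. Your Artin's-theorem identification of that closure and your routine verifications (that $F_{\Sigma_1}(L)$ is a $\Sigma$-pseudofield and that $\varphi$ is injective) simply spell out details the paper delegates to that citation.
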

\begin{proof} Set $A = F_{\Sigma_1}(L)$ and
and let $\varphi$ be the Taylor homomorphism for $\id : L\to L$ by Proposition~\ref{prop:10}.
%
Then, $A^\sigma = F_{\Sigma_1}(C)$ is the $\Sigma_1$-closure
of $C$ \cite[discussions preceding Proposition~19]{Dima}.
 \end{proof}

\section{Picard--Vessiot theory}\label{sec:PVTheory}
\subsection{Picard--Vessiot ring}
 Let $K$ be a Noetherian
$\Sigma$-pseudofield and let $C = K^\sigma$ be a $\Sigma_1$-closed
pseudofield. Let $A \in \GL_n(K)$. Consider the following difference
equation
\begin{equation}\label{eq:diffeq}
\sigma Y = AY.
\end{equation}
Let $R$ be a $\Sigma$-ring containing $K$.
\begin{definition} A matrix $F \in \GL_n(R)$ is called a fundamental matrix of equation~\eqref{eq:diffeq} if $\sigma F = AF$.
\end{definition}
Let $F_1$ and $F_2$ be two fundamental matrices of~\eqref{eq:diffeq}. Then for $M := F_1^{-1}F_2$ we have
$$
\sigma(M) = \sigma(F_1)^{-1}\sigma(F_2) = F_1^{-1}A^{-1}AF_2 = F_1^{-1}F_2 = M,$$
that is, $M \in \GL_n\left(R^\sigma\right)$.
\begin{definition}
A $\Sigma$-ring $R$ is called a Picard--Vessiot ring for
equation~\eqref{eq:diffeq} if
\begin{enumerate}
\item there exists a fundamental matrix $F \in \GL_n(R)$ for~\eqref{eq:diffeq},
\item $R$ is a $\Sigma$-simple ring, and
\item $R$ is $\Sigma$-generated over $K$ be the matrix entries $F_{ij}$ and $1/\det F$.
\end{enumerate}
\end{definition}

\begin{proposition}\label{prop:nonewconst}
Let $K$ be a Noetherian $\Sigma$-pseudofield, $K^\sigma$ be a
$\Sigma_1$-closed pseudofield, and $R$ be a Picard--Vessiot ring for
equation~\eqref{eq:diffeq}. Then, $
R^\sigma = K^\sigma$.
\end{proposition}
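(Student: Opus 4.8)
The plan is to deduce the statement directly from Theorem~\ref{thm:nonewconstants} applied with $L = K$. Two of the three hypotheses of that theorem hold by assumption: $K$ is a Noetherian $\Sigma$-pseudofield and $C := K^\sigma$ is a $\Sigma_1$-closed pseudofield; moreover $R$ is $\Sigma$-simple by part~(2) of the definition of a Picard-Vessiot ring. The only point that is not immediate is that $R$ should be \emph{$\Sigma_1$-finitely generated} over $K$, whereas the definition only guarantees that $R$ is $\Sigma$-generated over $K$ by the finitely many elements $F_{ij}$ and $1/\det F$. So the whole argument reduces to upgrading $\Sigma$-generation to $\Sigma_1$-generation.

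This is where I would use the difference equation, and it is the only step with any content. Since $\sigma F = AF$ with $A \in \GL_n(K)$, we get $\sigma(F_{ij}) = \sum_k A_{ik} F_{kj}$, so each $\sigma(F_{ij})$ lies in the $K$-span of the $F_{kj}$. Rewriting the equation as $\sigma^{-1}(F) = \sigma^{-1}(A)^{-1} F$, where $\sigma^{-1}(A)^{-1} \in \GL_n(K)$, shows in the same way that each $\sigma^{-1}(F_{ij})$ is a $K$-linear combination of the $F_{kj}$. Finally $\det A$ is a unit of $K$, and $\sigma(\det F) = \det(AF) = \det A \cdot \det F$, so $\sigma(1/\det F) = (\det A)^{-1}(1/\det F)$ and, symmetrically, $\sigma^{-1}(1/\det F)$ is a $K$-multiple of $1/\det F$. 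Hence $\sigma^{\pm 1}$ carries each of the chosen generators into the $K$-algebra they generate.

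Consequently the $K$-subalgebra of $R$ generated by the $\Sigma_1$-orbits of the $F_{ij}$ and of $1/\det F$ is stable under $\sigma$ and $\sigma^{-1}$, hence under $\Sigma_0$, and it is stable under $\Sigma_1$ by construction; being a $\Sigma$-subalgebra containing the $\Sigma$-generators of $R$, it equals $R$. As $\Sigma_1$ is finite, this exhibits $R$ as a finitely generated $K$-algebra, and in particular as a $\Sigma_1$-finitely generated $\Sigma$-simple ring over $K$. All hypotheses of Theorem~\ref{thm:nonewconstants} are then met, and it yields $R^\sigma = C = K^\sigma$. I expect no serious obstacle beyond the generation argument of the second paragraph: the passage from $\Sigma$- to $\Sigma_1$-generation is forced by the linearity of~\eqref{eq:diffeq} together with the finiteness of $\Sigma_1$, and everything else is a direct appeal to the earlier theorem.
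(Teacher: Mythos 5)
Your proof is correct and takes essentially the same approach as the paper: the paper's own proof likewise reduces the statement to Theorem~\ref{thm:nonewconstants}, asserting without further detail that $R$ is a $\Sigma_1$-finitely generated (hence, as $|\Sigma_1|<\infty$, finitely generated) algebra over $K$. Your second paragraph, using $\sigma F = AF$ and $\sigma^{-1}F=\sigma^{-1}(A)^{-1}F$ to upgrade $\Sigma$-generation to $\Sigma_1$-generation, simply spells out the step the paper leaves implicit.
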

\begin{proof} Since $R$ is a $\Sigma_1$-finitely generated algebra over $K$ and
    $|\Sigma_1|<\infty$, $R$ is finitely generated over $K$. Then the result follows from Theorem~\ref{thm:nonewconstants}.
 \end{proof}

\begin{proposition}\label{prop:PVringunique} Let $K$ be a Noetherian $\Sigma$-pseudofield with $K^\sigma$ being a $\Sigma_1$-closed pseudofield. Then there exists a unique Picard--Vessiot ring for equation~\eqref{eq:diffeq}.
\end{proposition}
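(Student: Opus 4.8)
The plan is to get existence from a universal construction and uniqueness from a tensor-product argument whose crux is the no-new-constants theorem. For \emph{existence}, I would first manufacture a $\Sigma$-ring carrying a fundamental matrix. Take a matrix $X=(X_{ij})$ of indeterminates together with one formal copy $X^{(\tau)}$ for each $\tau\in\Sigma_1$ (with $X^{(\id)}=X$), and set
$$
\mathcal U = K\big[X^{(\tau)}_{ij},\ \det\big(X^{(\tau)}\big)^{-1}\:\big|\:\tau\in\Sigma_1,\ 1\Le i,j\Le n\big].
$$
Make $\mathcal U$ a $\Sigma$-ring by letting $\Sigma_1$ permute the copies via $\rho\cdot X^{(\tau)}=X^{(\rho\tau)}$ and by setting $\sigma\big(X^{(\tau)}\big)=\tau(A)\cdot X^{(\tau)}$; since $\Sigma=\Sigma_0\oplus\Sigma_1$ is abelian, a one-line check shows $\sigma$ commutes with each $\rho\in\Sigma_1$, so these assemble into a genuine $\Sigma$-action. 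As $K\subset\mathcal U$ we have $\mathcal U\neq 0$, so Zorn's lemma furnishes a maximal proper $\Sigma$-ideal $\m$. Then $R:=\mathcal U/\m$ is $\Sigma$-simple, the image $F$ of $X$ is a fundamental matrix (invertible because $\det X$ was inverted, with $\sigma F=AF$ by construction), and $R$ is $\Sigma$-generated over $K$ by the $F_{ij}$ and $1/\det F$. Thus $R$ is a Picard-Vessiot ring.

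For \emph{uniqueness}, let $R_1,R_2$ be two Picard-Vessiot rings with fundamental matrices $F_1,F_2$, and form the $\Sigma$-ring $R_1\otimes_K R_2$. Pick a maximal proper $\Sigma$-ideal $\p$ and set $R_3=(R_1\otimes_K R_2)/\p$, which is $\Sigma$-simple. The two structural maps $R_i\to R_3$ are $\Sigma$-homomorphisms out of $\Sigma$-simple rings and send $1\mapsto 1\neq 0$, hence are injective; write $\bar F_1,\bar F_2$ for the images of the fundamental matrices. Because $\sigma^k\bar F_i$ has entries in the $K$-span of the entries of $\bar F_i$, the ring $R_3$ is $\Sigma_1$-finitely generated over $K$, so Theorem~\ref{thm:nonewconstants} applies and yields $R_3^\sigma=C:=K^\sigma$. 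As observed above for any two fundamental matrices, $M:=\bar F_1^{-1}\bar F_2\in\GL_n(R_3^\sigma)=\GL_n(C)$.

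Since $C\subset K$ lies in $\operatorname{im}(R_1)$, the relation $\bar F_2=\bar F_1 M$ shows every entry of $\bar F_2$ lies in $\operatorname{im}(R_1)$, while $\bar F_1=\bar F_2 M^{-1}$ shows the reverse inclusion; hence $\operatorname{im}(R_1)=\operatorname{im}(R_2)$ inside $R_3$. Composing the two injections $R_1\xrightarrow{\ \sim\ }\operatorname{im}(R_1)=\operatorname{im}(R_2)\xleftarrow{\ \sim\ }R_2$ produces a $\Sigma$-$K$-algebra isomorphism $R_1\cong R_2$, as desired.

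The main obstacle is the identity $R_3^\sigma=C$: one must know that forming the $\Sigma$-simple quotient of $R_1\otimes_K R_2$ introduces no new $\sigma$-constants. This is precisely where the Noetherian hypothesis on $K$ and the $\Sigma_1$-closedness of $C$ are used, through Theorem~\ref{thm:nonewconstants}; everything else is formal. The only accompanying point needing care is the verification that $R_3$ is genuinely $\Sigma_1$-finitely generated over $K$ (so that the theorem applies), which follows because the $\sigma$-translates of the generators are $K$-linear combinations of the generators themselves and $\Sigma_1$ is finite.
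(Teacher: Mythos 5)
Your proof is correct and follows essentially the same route as the paper: existence by putting $\sigma F = AF$ on the $\Sigma_1$-polynomial ring $K\{X_{ij},1/\det X\}_{\Sigma_1}$ (your $\mathcal U$ is exactly this ring written out) and quotienting by a maximal $\Sigma$-ideal, and uniqueness by passing to a $\Sigma$-simple quotient of $R_1\otimes_K R_2$, using injectivity from $\Sigma$-simplicity and the no-new-constants result to get $M\in\GL_n(K^\sigma)$. The only cosmetic difference is that you invoke Theorem~\ref{thm:nonewconstants} directly (with an explicit check of $\Sigma_1$-finite generation) where the paper cites its corollary, Proposition~\ref{prop:nonewconst}.
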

\begin{proof} For existence, define the action of $\sigma$ on the $\Sigma_1$-ring $$R:=K\{F_{ij},1/\det F\}_{\Sigma_1}$$ by $\sigma F = AF$. Let $\m$ be any maximal $\Sigma$-ideal in $R$. Then $R/\m$ is the Picard--Vessiot ring for equation~\eqref{eq:diffeq}.
For uniqueness, let $R_1$ and $R_2$ be two
Picard--Vessiot rings of equations~\eqref{eq:diffeq}. Let
$$R=(R_1\otimes_K R_2)/\m,$$ where $\m$ is a maximal $\Sigma$-ideal. Since $R_1$ and $R_2$ are
$\Sigma$-simple, the $\Sigma$-homomorphisms
$$
\varphi_1 : R_1 \to R, \ r\mapsto r\otimes 1,\quad \varphi_2 : R_2 \to R, \ r\mapsto 1\otimes r,
$$
are injective. Let $F_1$ and $F_2$ be fundamental matrices of $R_1$ and $R_2$, respectively. Then there exists $M \in \GL_n(R^\sigma)$ such that
$
\varphi_1(F_1) = \varphi_2(F_2) M$.
Proposition~\ref{prop:nonewconst} implies that $R^\sigma=K^\sigma$. Therefore, $\varphi_1(F_1) \subset \varphi_2(R_2)$. Similarly, $\varphi_2(F_2) \subset \varphi_1(R_1)$. Hence, $\varphi_1(R_1) = \varphi_2(R_2)$
and, thus, $R_1 \cong R_2 \cong R$.
 \end{proof}
\begin{proposition}\label{prop:isnoetherian} Let $K$ be a Noetherian $\Sigma$-pseudofield with $K^\sigma$ being a $\Sigma_1$-closed pseudofield and $R$ be a Picard--Vessiot ring of equation~\eqref{eq:diffeq}. Then the complete quotient ring $L := \Qt(R)$ is a Noetherian $\Sigma$-pseudofield with $L^\sigma=K^\sigma$.
\end{proposition}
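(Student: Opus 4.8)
The plan is to produce $L = \Qt(R)$ explicitly as a localization and then verify, one by one, the defining properties of a Noetherian $\Sigma$-pseudofield, saving the constants computation for last. First I would note that $R$ is Noetherian: it is $\Sigma_1$-finitely generated over the Noetherian pseudofield $K$, and since $|\Sigma_1|<\infty$ this makes it finitely generated over $K$. It is also reduced, because its nilradical is a $\Sigma$-ideal and $R$ is $\Sigma$-simple. Hence the total ring of fractions $\Qt(R) = S^{-1}R$, where $S$ is the multiplicative set of non-zerodivisors, is a reduced Noetherian ring of dimension zero, i.e.\ a finite product of fields. In particular $L$ is absolutely flat, and it is Noetherian in the sense of having finitely many indecomposable idempotents summing to $1$ (cf.\ Proposition~\ref{prop:Noetherian}). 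Finally, $\Sigma$ acts on $R$ by automorphisms, hence preserves $S$, so its action extends uniquely to $L$ and makes $L$ a $\Sigma$-ring.

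Next I would establish $\Sigma$-simplicity of $L$. Given a nonzero $\Sigma$-ideal $I \subset L$, I pick $0 \neq x \in I$ and write $x = r/s$ with $s \in S$ and $0 \neq r \in R$; then $r = sx \in I \cap R$, so $I \cap R$ is a nonzero $\Sigma$-ideal of $R$. By $\Sigma$-simplicity of $R$ it equals $R$, whence $1 \in I$ and $I = L$. Thus $L$ is an absolutely flat simple $\Sigma$-ring, i.e.\ a Noetherian $\Sigma$-pseudofield.

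It remains to show $L^\sigma = K^\sigma =: C$, and this is the step I expect to be the main obstacle. The inclusion $C = R^\sigma \subseteq L^\sigma$ is immediate from Proposition~\ref{prop:nonewconst}. For the converse, take $x \in L^\sigma$ and consider its denominators in $R$. The naive ideal $\{r \in R : rx \in R\}$ is only $\Sigma_0$-stable, and since $R$ is $\Sigma$-simple but not $\Sigma_0$-simple this is not enough to conclude. The remedy is to use the whole $\Sigma_1$-orbit: for $\mu \in \Sigma_1$ set $x_\mu := \mu(x)$, which again lies in $L^\sigma$ since $\sigma$ and $\mu$ commute, and put $I := \{r \in R : r x_\mu \in R \text{ for all } \mu \in \Sigma_1\}$. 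Writing a general $\tau \in \Sigma$ as $\tau = \nu\alpha$ with $\nu \in \Sigma_0$ and $\alpha \in \Sigma_1$, the identity $\tau(r)\,x_\mu = \tau\big(r\,x_{\alpha^{-1}\mu}\big)$, which relies on $\nu(x_\mu) = x_\mu$, shows that $I$ is a genuine $\Sigma$-ideal; it is nonzero because a common denominator $s = \prod_{\mu \in \Sigma_1} s_\mu \in S$ belongs to it (here $|\Sigma_1| < \infty$ is used). By $\Sigma$-simplicity of $R$ we get $1 \in I$, so $x = x_{\id} \in R$ and therefore $x \in R^\sigma = C$. This yields $L^\sigma = C = K^\sigma$ and finishes the argument.
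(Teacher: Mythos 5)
Your proof is correct, and its first two steps coincide with the paper's: you show $L$ is a finite product of fields (reduced Noetherian ring in which every prime is minimal) exactly as the paper does via the minimal primes of $R$, and your $\Sigma$-simplicity argument (any nonzero $\Sigma$-ideal of $L$ meets $R$ nontrivially) is the paper's verbatim. The constants computation, however, takes a genuinely different route. The paper takes $c = a/b \in L^\sigma$, observes that every $\Sigma$-subring $D \subset L$ containing $R$ is $\Sigma$-simple (stated there for subrings containing $K$, but the argument uses $R \subseteq D$; it is the same localization fact you use, that every nonzero element of $L$ has a nonzero multiple in $R$), and then applies Theorem~\ref{thm:nonewconstants} a second time, now to the auxiliary ring $R\{c\}_{\Sigma_1}$, which is $\Sigma$-simple and $\Sigma_1$-finitely generated over $K$, to conclude $c \in K^\sigma$. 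You instead prove the sharper intermediate statement $L^\sigma \subseteq R$: the common-denominator ideal of the full $\Sigma_1$-orbit of $x$ is a nonzero $\Sigma$-ideal of $R$, so $\Sigma$-simplicity forces $x \in R$, after which Proposition~\ref{prop:nonewconst} finishes. Your remark that the naive ideal $\{r \in R : rx \in R\}$ is only $\Sigma_0$-stable, and that $\Sigma_1$-stability requires passing to the orbit (where finiteness of $\Sigma_1$ and commutativity of $\Sigma$ enter), is exactly the right point and is verified correctly. As for what each approach buys: yours only re-uses the already-established Proposition~\ref{prop:nonewconst} (i.e., Theorem~\ref{thm:nonewconstants} for $R$ itself) and yields the clean extra fact that $\sigma$-constants of $\Qt(R)$ have no genuine denominators; the paper's avoids the orbit bookkeeping but must check the hypotheses of Theorem~\ref{thm:nonewconstants} afresh on $R\{c\}_{\Sigma_1}$. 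Both ultimately rest on the same no-new-constants theorem, so neither is logically weaker, but your treatment of the quotient-ring step is more self-contained.
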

\begin{proof} We will first show that $L$ is $\Sigma$-simple. Let $\ia$ be a non-zero $\Sigma$-ideal of $L$. Then $\ia\cap R \ne (0)$ and, therefore, $1 \in \ia$.

We will now show that $L$ is a finite product of fields. Since the ring $K$ is Noetherian and $R$ is finitely generated over $K$, the ring $R$ is Noetherian as well by the Hilbert basis theorem.
Hence, there exists a smallest set of prime ideals $\p_1,\ldots, \p_n$ in $R$ such that
$
(0) = \p_1\cap\ldots\cap\p_n$.
The set of non-zero divisors in $R$ coincides with $R \setminus \bigcup_{i=1}^n\p_i$.
In $\Qt(R)$, all prime ideals correspond to the $\p_i$'s, that is, they are all  maximal and their intersection is $(0)$. Therefore, by \cite[Proposition~1.10]{AM}
$$
\Qt(R) \cong \Qt(R/\p_1)\times\ldots\times\Qt(R/\p_n),
$$
which is absolutely flat and Noetherian.

Let $c = \frac{a}{b} \in L^\sigma$. Using
Theorem~\ref{thm:nonewconstants}, it suffices to show that
$R\{c\}_{\Sigma_1}$ is a $\Sigma$-simple $\Sigma$-ring, since this
would imply that $c\in K^\sigma$.
For this, we will show that
every $\Sigma$-subring $D \subset L$ containing $K$ is
$\Sigma$-simple. Indeed, for every $0\ne d\in L$ there exists
$a\in R$ such that $0\ne ad\in R$, which is true because $L$ is
the localization with respect to the set of non-zero divisors. Therefore, for every nonzero ideal $\ia$ of $D$ we have $\ia\cap R \neq \{0\}$.
Since $R$ is $\Sigma$-simple, $1 \in \ia$.  \end{proof}

\subsection{Picard--Vessiot pseudofield}
Let $K$ be a Noetherian $\Sigma$-pseudofield with $K^\sigma$ being $\Sigma_1$-closed.

\begin{definition} A Noetherian $\Sigma$-pseudofield $L$ is called a Picard--Vessiot pseudofield for equation~\eqref{eq:diffeq} if
\begin{enumerate}
\item there is a fundamental matrix $F$ of equation~\eqref{eq:diffeq} with coefficients in $L$,
\item $L^\sigma = K^\sigma$,
\item $L$ is $\Sigma_1$-generated over $K$ by the entries of $F$.
\end{enumerate}
\end{definition}
It follows from Proposition~\ref{prop:isnoetherian} that every equation~\eqref{eq:diffeq} has a Picard--Vessiot pseudofield. We will show that all Picard--Vessiot pseudofields are of this form.
\begin{proposition}\label{prop:LQtR} Let $K$ be a Noetherian $\Sigma$-pseudofield, with $C := K^\sigma$ being a $\Sigma_1$-closed pseudofield, and $L$ be a Picard--Vessiot pseudofield for equation~\eqref{eq:diffeq}. Then, $
L \iso \Qt(R)$,
where $R$ is the corresponding Picard--Vessiot ring.
\end{proposition}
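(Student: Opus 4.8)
The plan is to show that the Picard-Vessiot pseudofield $L$ coincides with the complete quotient ring $\Qt(R)$ of the Picard-Vessiot ring $R$. First I would construct $R$ inside $L$. Since $L$ contains a fundamental matrix $F$ of equation~\eqref{eq:diffeq} and $L$ is generated over $K$ by the entries of $F$, let $R_0 \subset L$ be the $\Sigma$-subring $K\{F_{ij},1/\det F\}_{\Sigma_1}$ generated by these entries. The key claim is that $R_0$ is a Picard-Vessiot ring: conditions (1) and (3) of the definition are immediate from the construction, so the work is to verify that $R_0$ is $\Sigma$-simple.

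\textbf{Verifying $\Sigma$-simplicity.}
To see that $R_0$ is $\Sigma$-simple, I would use the argument already employed in the proof of Proposition~\ref{prop:isnoetherian}: every $\Sigma$-subring $D$ of a Noetherian $\Sigma$-pseudofield $L$ that contains $K$ is itself $\Sigma$-simple, because $L$ is the localization of such a subring at its non-zero divisors, so any nonzero $\Sigma$-ideal $\ia$ of $D$ satisfies $\ia\cap D'\ne (0)$ for a suitable simple subring $D'$. More directly, since $L$ is $\Sigma$-simple and $L$ is a finite product of fields, for any $0\ne d \in R_0$ one can clear denominators to land back in a $\Sigma$-simple core; the nonzero $\Sigma$-ideal generated by $d$ must contain $1$. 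Thus $R_0$ satisfies condition (2) as well, and $R_0$ is a Picard-Vessiot ring. By Proposition~\ref{prop:PVringunique}, $R_0 \iso R$, the unique Picard-Vessiot ring for the equation.

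\textbf{Identifying $L$ with $\Qt(R)$.}
Having realized $R \iso R_0 \subset L$, it remains to show the inclusion $\Qt(R) \subset L$ is an equality. On one hand, since $R_0 \subset L$ and $L$ is a finite product of fields (hence its own complete quotient ring), every non-zero divisor of $R_0$ is invertible in $L$, so the universal property of the complete quotient ring yields a canonical $\Sigma$-embedding $\Qt(R_0) \hookrightarrow L$. On the other hand, $L$ is generated over $K$ by the entries of $F$, which all lie in $R_0 \subset \Qt(R_0)$; since $\Qt(R_0)$ is already a Noetherian $\Sigma$-pseudofield by Proposition~\ref{prop:isnoetherian}, and it contains both $K$ and all the generators of $L$, the minimality forces $L = \Qt(R_0) \iso \Qt(R)$.

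\textbf{Main obstacle.}
I expect the delicate point to be the claim that the embedding $\Qt(R_0)\hookrightarrow L$ is surjective, i.e. that $L$ introduces no elements beyond the fractions of $R_0$. This hinges on knowing that $\Qt(R_0)$ is \emph{already} a pseudofield containing $K$ and the generators of $L$, so that $L$, being the smallest such pseudofield, cannot be strictly larger. The cleanest way to secure this is to invoke Proposition~\ref{prop:isnoetherian}, which guarantees $\Qt(R)$ is a Noetherian $\Sigma$-pseudofield with the correct constants $\Qt(R)^\sigma = K^\sigma$, matching condition (2) for $L$; the two pseudofields then agree by the generation hypothesis.
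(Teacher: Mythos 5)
Your overall strategy --- realizing a copy of the Picard--Vessiot ring inside $L$ as $R_0 = K\{F_{ij},1/\det F\}_{\Sigma_1}$ and then identifying $L$ with $\Qt(R_0)$ --- is a legitimate alternative to the paper's route (the paper goes in the opposite direction: it embeds the abstract ring $R = K\{X_{ij},1/\det X\}_{\Sigma_1}/I$, with $I$ a maximal $\Sigma$-ideal, into $L$ and then extends to $\Qt(R)$). But the crux of either route is the same, and it is exactly where your argument is circular: the $\Sigma$-simplicity of $R_0$. The fact you quote from the proof of Proposition~\ref{prop:isnoetherian} is not that every $\Sigma$-subring of $L$ containing $K$ is $\Sigma$-simple; what that proof shows, and uses, is that every $\Sigma$-subring $D$ with $R \subseteq D \subseteq L = \Qt(R)$ is $\Sigma$-simple, where $R$ is \emph{already known} to be a $\Sigma$-simple ring whose total quotient ring is $L$: a nonzero $\Sigma$-ideal $\ia \subseteq D$ then meets $R$ nontrivially, and $\Sigma$-simplicity of $R$ forces $1 \in \ia$. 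In your situation there is no such ring available inside $R_0$ --- producing one is precisely what the proposition asserts --- so ``a suitable simple subring $D'$'' has no referent, and the phrase ``$L$ is the localization of $R_0$ at its non-zero divisors'' is again the conclusion, not a hypothesis. The unrestricted claim is in fact false: if $f \in L$ solves $\sigma f = af$ and the elements $\tau f$, $\tau \in \Sigma_1$, are algebraically independent over $K$ (such equations exist; see Corollary~\ref{cor:pairwise}), then the $\Sigma$-subring $K[\tau f \::\: \tau\in\Sigma_1] \subset L$ contains $K$ but is not $\Sigma$-simple, since the $\Sigma$-ideal generated by $f$ in it is proper and nonzero. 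Adjoining $1/\det F$ rules out this particular example, but you give no argument that applies to the specific ring $R_0$.

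What is needed to close the gap is exactly the machinery of the paper's proof: the $\Sigma$-isomorphism $L\otimes_K K\{X_{ij},1/\det X\}_{\Sigma_1} \cong L\otimes_C C\{Y_{ij},1/\det Y\}_{\Sigma_1}$ with $Y = F^{-1}X$ a matrix of $\sigma$-constants, the ideal correspondence of Proposition~\ref{prop:idealcorrespondance} (applicable because $L$ is $\Sigma$-simple and $C$ is $\Sigma_1$-closed), and \cite[Proposition~14]{Dima} to realize any maximal $\Sigma_1$-ideal of the constant algebra by a point $M \in \GL_n(C)$. With these ingredients one shows that every maximal $\Sigma$-ideal of $K\{X_{ij},1/\det X\}_{\Sigma_1}$ containing $\ker(X \mapsto F)$ equals $\ker(X \mapsto FM)$ for some $M\in\GL_n(C)$, and the latter differs from $\ker(X \mapsto F)$ by the $\Sigma$-automorphism $X \mapsto XM$; hence $\ker(X \mapsto F)$ is itself a maximal $\Sigma$-ideal and $R_0$ is $\Sigma$-simple. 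Once that is in place, your remaining steps (uniqueness via Proposition~\ref{prop:PVringunique}, the $\Sigma$-embedding $\Qt(R_0)\hookrightarrow L$, and surjectivity from the generation hypothesis) do go through, essentially as at the end of the paper's own proof.
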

\begin{proof}
    Let $\sigma$ act on the $\Sigma_1$-ring $R := L\{X_{ij},1/\det X\}_{\Sigma_1}$ by $\sigma X = AX$. Let $F$ be a fundamental matrix of~\eqref{eq:diffeq} with coefficients in $L$. Define $
Y = F^{-1}X$.
Then $R=L\{Y_{ij},1/\det Y\}_{\Sigma_1}$ and $\sigma Y = Y$. Therefore,
$$
R^\sigma = C\{Y_{ij},1/\det Y\}_{\Sigma_1}.$$
Moreover, we have a $\Sigma$-isomorphism
\begin{equation}\label{eq:LKLCisomorphism}
L\otimes_KK\{X_{ij},1/\det X\}_{\Sigma_1} \cong L\otimes_CC\{Y_{ij},1/\det Y\}_{\Sigma_1}.
\end{equation}
Recall that the Picard--Vessiot ring is given by $
R = K\{X_{ij},1/\det X\}_{\Sigma_1}/I$,
where $I$ is a maximal $\Sigma$-ideal. By Proposition~\ref{prop:idealcorrespondance}
and isomorphism~\eqref{eq:LKLCisomorphism},
the ideal $L\otimes_K I$
corresponds to a $\Sigma$-ideal of the form $L\otimes_CJ$, where $J$ is a $\Sigma_1$-ideal of $C\{Y_{ij},1/\det Y\}_{\Sigma_1}$. This induces a $\Sigma$-isomorphism
$$
\phi : L\otimes_{K}R \to L\otimes_CB,$$
where $B = C\{Y_{ij},1/\det C\}_{\Sigma_1}/J$ consists of $\sigma$-constants. Let $\m$ be a maximal $\Sigma$-ideal in $B$. By \cite[Proposition~14]{Dima},
we have $$
\gamma : B \to B/\m\cong C,$$
since $C$ is a $\Sigma_1$-closed pseudofield.
Let $\varphi$ be the $\Sigma$-homomorphism defined by
$$
\begin{CD}
R@>r\mapsto 1\otimes r>>L\otimes_KR@>\phi>>L\otimes_C B@>\id_L\otimes\gamma>>L\otimes_CC@>l\otimes c\mapsto l\cdot c>>L.
\end{CD}
$$
Since $R$ is $\Sigma$-simple, the homomorphism $\varphi$ is injective. By the universal property, $\varphi$ extends to a $\Sigma$-embedding $\overline\varphi$ of $\Qt(R)$ into $L$. Since $L$ is generated by the entries of its fundamental matrix $F$, we finally conclude that $\overline{\varphi}(\Qt(R)) = L$.
 \end{proof}

\subsection{Difference algebraic groups}\label{sec:diffalggroups}
\subsubsection{Definitions}
In analogy with differential algebraic groups~\cite{Cassidy,CassidyRep}, we make the following definitions. Throughout, $C$ will denote a $\Sigma_1$-closed pseudofield. 
Recall that for $E\subseteq
    C\{y_1,\ldots,y_n\}_{\Sigma_1}$, the set $\V(E)$ is  the set of
    all common zeroes for elements of $E$ in $C^n$. The sets of the form
    $\V(E)$, for some $E$, are called $C$-$\Sigma_1$-algebraic
    varieties (also called pseudovarieties in~\cite{Dima}).
    
    Also recall that, for an arbitrary $C$-$\Sigma_1$-algebraic variety $X$, the set of
    all difference polynomials vanishing on $X$ will be denoted by
    $\I(X)$. Every difference polynomial defines a polynomial
    function on $X$. The ring of all polynomial functions, thus,
    coincides with $C\{y_1,\ldots,y_n\}_{\Sigma_1}/\I(X)$.

\begin{definition}
    A regular map $f\colon X\to Y$ of $C$-$\Sigma_1$-algebraic varieties is a map given by
    difference polynomials in coordinates (for a general definition see~\cite[Section~4.6]{Dima}, in particular, Theorem~40 there).
\end{definition}

\begin{definition}
    A $C$-$\Sigma_1$-algebraic group is a group supplied with a structure of a
    $C$-$\Sigma_1$-algebraic variety such that the multiplication and
    inverse maps are regular.
\end{definition}

\begin{definition}
    A $C$-$\Sigma_1$-Hopf algebra is a $C$-$\Sigma_1$-algebra $H$ supplied with comultiplication, counit, and antipode morphisms that are all $C$-$\Sigma_1$-algebra morphisms.
\end{definition}

Note that the ring of polynomial functions of a
$C$-$\Sigma_1$-algebraic variety is a reduced Hopf algebra such that
the comultiplication, antipod, and counit are  homomorphisms
of $C$-$\Sigma_1$-algebras.

An example of a $C$-$\Sigma_1$-algebraic group is the group
$\GL_{m,\Sigma_1}(C)$, that is the set of all $m\times m$ matrices
with coefficients in $C$ and having invertible determinant. The
corresponding ring of regular functions is
        $$H_m=C\{x_{11},\ldots,x_{mm},1/{\det X}\}_{\Sigma_1}.$$
    The $C$-$\Sigma_1$-algebra $H_m$ has a Hopf algebra structure defined on the $\Sigma_1$-generators in the usual way and is extended by commuting to the $\Sigma_1$-monomials in the generators.

\begin{example}
   Let us describe the structure of $\GL_{1,\Sigma_1}(C)$ explicitly. Let $$\Sigma_1 = \left\{\id,\rho,\rho^2,\ldots,\rho^{t-1}\right\}$$ and consider
    \begin{equation*}
        H_1=C\{x,1/x\}_{\Sigma_1}=C\left[x,1/x,\rho(x),1/\rho(x),\ldots,\rho^{t-1}(x),1/\rho^{t-1}(x)\right].
    \end{equation*}
    Then, the comultiplication is
    $$\rho^l(x)\mapsto \rho^l(x)\otimes\rho^l(x),$$ and the antipode map is $$\rho^l(x)\mapsto1/\rho^l(x).$$
    Since $C$ is $\Sigma_1$-closed,  it is of the form $F_{\Sigma_1}(K)$
    for some algebraically closed field $K$. Then the group
    $\GL_{1,\Sigma_1}(C)$ has a natural structure of a $K$-algebraic group such
    that $$\GL_{1,\Sigma_1}(C)=\GL_{1}(K)^t.$$
\end{example}

\begin{definition}
    A linear $C$-$\Sigma_1$-algebraic group is a closed subgroup in $\GL_{m,\Sigma_1}(C)$, that is a subgroup given by difference
    polynomials.
\end{definition}
In particular, this means that the $C$-$\Sigma_1$-Hopf algebra $H$
of a linear $C$-$\Sigma_1$-algebraic group is a quotient of $H_m$ by a
radical $\Sigma_1$-Hopf-ideal.
%
    More explicitly, the above equivalence also follows from the equivalence of the
    categories of affine pseudovarieties and the category of reduced $\Sigma_1$-finitely generated algebras~\cite[Proposition~42]{Dima}.

\subsubsection{Difference algebraic subgroups of ${\Gm}_{,\Sigma_1}$}
\begin{example}\label{ex:subgroupsofGm}
    In the usual case of varieties over a field $\kkk$, the algebraic subgroups of $\Gm$ are given by equations $x^l=1$. The corresponding ideal of $\kkk\left[x,x^{-1}\right]$ is $(x^l-1)$.
    In the case of $C$-$\Sigma_1$-groups, where $$\Sigma_1 = \Z/t_1\Z\oplus\ldots\oplus\Z/t_p\Z =: \{\id=\alpha_1,\ldots,\alpha_t\},\quad t := t_1\cdot\ldots\cdot t_p,$$   there are more $\Sigma_1$-algebraic subgroups of ${\Gm}_{,\Sigma_1}$.
Let $C$ be an arbitrary Noetherian $\Sigma_1$-pseudofield. Let  also $\{e_0,\ldots,e_{s-1}\}$ be all
    indecomposable idempotents of $C$ with $\alpha_i(e_0) = e_{i-1}$, $1\Le i\Le s.$
    Then the $\Sigma_1$-Hopf algebra of ${\Gm}_{,\Sigma_1}$ is
    \begin{align*}
    C\{x,1/x\}_{\Sigma_1}=(K\times\ldots\times
    K)[x_\alpha,1/x_\alpha\:|\:\alpha\in\Sigma_1],
 \end{align*}
    where $K=C/\mathfrak{m}$ for a maximal ideal $\mathfrak{m}$ of $C$.
    We have 
    \begin{align*}
    C\{x,1/x\}_{\Sigma_1}&=e_0C\{x,1/x\}_{\Sigma_1}\times\ldots\times
    e_{s-1}C\{x,1/x\}_{\Sigma_1},\\
    R_i&=e_iC\{x,1/x\}_{\Sigma_1}=K[x_\alpha,1/x_\alpha\:|\:\alpha\in\Sigma_1].
    \end{align*}
    As we can see, each $R_i$ is a Hopf algebra. Let $I$ be the
    $\Sigma_1$-ideal defining our $\Sigma_1$-closed subgroup of
    ${\Gm}_{,\Sigma_1}$. Then, $$I=e_0I\times\ldots\times e_{s-1}I.$$ For each $i$, $0\Le i\Le s-1$, the ideal
    $e_iI\subset R_i$ is defined by equations
    \begin{align*}
        x_{\alpha_1}^{k_{i,1,\alpha_1}}\cdot\ldots\cdot x_{\alpha_t}^{k_{i,1,\alpha_t}}&=1,\\
        \vdots &\\
         x_{\alpha_1}^{k_{i,m,\alpha_1}}\cdot\ldots\cdot x_{\alpha_t}^{k_{i,m,\alpha_t}}&=1.
    \end{align*}
    So, if we collect all equations of all ideals $e_i I$, $0\Le i\Le s-1$,
    we obtain the equations
$$
    \begin{array}{rcl}
    e_0x^{k_{0,1,1}}\alpha_2\left(x^{k_{0,1,2}}\right)\cdot\ldots\cdot\alpha_t\left(x^{k_{0,1,
    t}}\right)&=&e_0,\\
    &\vdots&\\
    e_{s-1}x^{k_{s-1,m,1}}\alpha_2\left(x^{k_{s-1,m,2}}\right)\cdot\ldots\cdot\alpha_t\left(x^{k_{s-1,m,
    t}}\right)&=&e_{s-1}.
    \end{array}
$$
    Applying $\alpha_i^{-1}$ to the equations with $e_i$, $0\Le i \Le s$, we can rewrite the above system in the form
    \begin{equation}\label{eq:systemforGm}
    \begin{array}{rcl}
    e_0x^{k_{1,1}}\alpha_2\left(x^{k_{1,2}}\right)\cdot\ldots\cdot\alpha_t\left(x^{k_{1,
    t}}\right)&=&e_0,\\
    &\vdots&\\
    e_{0}x^{k_{m,1}}\alpha_2\left(x^{k_{m,2}}\right)\cdot\ldots\cdot\alpha_t\left(x^{k_{m,
    t}}\right)&=&e_{0},
    \end{array}
    \end{equation}
    which generate $I$ as a $\Sigma_1$-ideal.
    The latter equations also give generators of the ideal $e_0 I$. So, by   \cite[Section~2.2]{Waterhouse} we
    must have $m\Le t$.

    Now we claim that there is an equation in
    $I$ of the form $\varphi(x)-1=0$, where
    $\varphi(xy)=\varphi(x)\varphi(y)$. Indeed,  for this, denote the first equation in~\eqref{eq:systemforGm} by $\psi(x)-e_0$. Then, the
    equation
    $$
    \sum_{1\Le k\Le s}\alpha_k(\psi(x)-e_0)=\sum_{1\Le k\Le s}\alpha_k(\psi(x)) - 1.
    $$
    is of the desired form, where the sum $\sum_{1\Le k\Le s}\alpha_k(\psi(x)) $ is multiplicative
    because the $e_i$'s are orthogonal.

    Now suppose that $s=t$ (this is the case, for example, when $C$ is
    $\Sigma_1$-closed). In this case, we know that the number $m$ of equations does not exceed the number $s$ of our idempotents. Then the following system 
    defines the ideal $I$.
   \begin{align}
    e_0x^{k_{1,1}}\alpha_2(x^{k_{1,2}})\cdot\ldots\cdot\alpha_t(x^{k_{1,
    t}})&=e_0,\tag{1}\\
    \vdots&&\notag\\
   e_0x^{k_{m,1}}\alpha_2(x^{k_{m,2}})\cdot\ldots\cdot\alpha_t(x^{k_{m,
    t}})&=e_0,\tag{m}\\
    e_0&=e_0,\tag{m+1}\\
    \vdots&&\notag\\
    e_0&=e_0.\tag{t}
    \end{align}
    Applying $\alpha_i$ to the $i$th equation, $1\Le i\Le t$, we obtain
    \begin{align}
    e_0x^{k_{1,1}}\alpha_2(x^{k_{1,2}})\cdot\ldots\cdot\alpha_t(x^{k_{1,
    t}})&=e_0,\tag{1}\\
    &\vdots\notag\\
    e_{m-1}\alpha_m(x^{k_{m,1}})(\alpha_m\alpha_2)(x^{k_{m,2}})\cdot\ldots\cdot(\alpha_m\alpha_t)(x^{k_{m,
    t-1}})&=e_{m-1}\tag{m},\\
   e_{m}&=e_{m}\tag{m+1},\\
    &\vdots\notag\\
    e_{t-1}&=e_{t-1}\tag{t}.
    \end{align}
   By taking the sum of the above equations, we arrive at an equation of
    the form
    \begin{equation}\label{eq:phi1}
    \varphi(x)=1.
    \end{equation}
     Since the $e_i$'s are orthogonal, the left-hand
    side is multiplicative. Moreover, this equation defines
    the same subgroup. Vice versa, every multiplicative $\varphi(x) \in C\{x,1/x\}_{\Sigma_1}$ defines a $\Sigma_1$-subgroup of ${\Gm}_{,\Sigma_1}$ via~\eqref{eq:phi1}. Note  that it might happen that  the set of solutions is empty. For example, this is the case for $\varphi=e$, where $e$ is idempotent and not equal to $1$.
\end{example}

\begin{example}Let $C = \C\times \C\times \C$ with
$
\rho(a_0,a_1,a_2) = (a_2,a_0,a_1),$ $a_i \in \C$.
By \cite[Proposition 15]{Dima}, $(C,\rho)$ is a $\Sigma_1$-closed pseudofield. Let
\begin{equation}\label{eq:defG}
G = \{a\in C\:|\: a\cdot\rho(a) = 1\},
\end{equation}
a $\Sigma_1$-subgroup of $\Gm$ considered in Example~\ref{ex:subgroupsofGm}.
A calculation shows that
$
G = \{(1,1,1),\:(-1,-1,-1)\}$.
This demonstrates  a major difference between $\Sigma_1$-subgroups and differential
algebraic subgroups (see \cite[Chapter~IV]{Cassidy}) of $\Gm$. More precisely,
in the differential case the order of the defining equation coincides with the algebraic dimension of the subgroup.

 In our case, the order of $\rho$ in~\eqref{eq:defG} is equal to $1$, however, the group is finite. Therefore, in order to compute the algebraic dimension of a $\Sigma_1$-group one needs to do more calculation than just to look at the $\rho$-order of the equation.
\end{example}
\subsection{Galois group}
As before, let $K$ be a Noetherian $\Sigma$-pseudofield with $C := K^\sigma$ being $\Sigma_1$-closed.
\begin{definition} Let $L$ be a Picard--Vessiot pseudofield of equation~\eqref{eq:diffeq}. Then the group of $\Sigma$-automorphisms of $L$ over $K$ is called the difference Galois group of~\eqref{eq:diffeq} and denoted by $\Aut_{\Sigma}(L/K)$.
\end{definition}
Let $L$ be a Picard--Vessiot pseudofield of equation~\eqref{eq:diffeq} and $F \in \GL_n(L)$ be a fundamental matrix. Then for any $\gamma \in \Aut_\Sigma(L/K)$ we have
\begin{equation}\label{eq:gamma}
\gamma(F) = FM_\gamma,
\end{equation} where $M_\gamma \in \GL_n(C)$, which, as usual, defines an injective group homomorphism from $\Aut_\Sigma(L/K)$ into $\GL_n(C)$. Since $L$ is generated by the entries of $F$, the action of $\gamma$ on $L$ is determined by its action on $F$. This induces an identification of $\Aut_\Sigma(L/K)$ with $\Aut_\Sigma(R/K)$,
where $R$ is the Picard--Vessiot ring corresponding to $F$.

We will now construct a map
$
\Aut_\Sigma(R/K) \to \Max_\Sigma(R\otimes_KR)$, the maximal $\Sigma$-ideals of $R\otimes_KR$.
For this, let $F$ be a fundamental matrix of equation~\eqref{eq:diffeq} with entries in $R$ and $\gamma \in \Aut_\Sigma(R/K)$. As above, $\gamma F = FM_\gamma$, where $M_\gamma \in \GL_n(C)$. We will then map
$$
\gamma \mapsto {[F\otimes 1-1\otimes FM_\gamma]}_\Sigma,$$
the smallest $\Sigma$-ideal containing $F\otimes 1-1\otimes FM_\gamma$.
Since $R$ is $\Sigma$-simple, the kernel of the surjective  $\Sigma$-homomorphism
$$(\gamma,\Id)\colon R\otimes_K R\to R,$$ which is $[F\otimes 1-1\otimes FM_\gamma]_\Sigma$, is a maximal $\Sigma$-ideal in $R\otimes_K R$.

To construct a map in the reverse direction, let $$
\phi_1,\phi_2 : R \to R\otimes_KR,$$ with $r\mapsto r\otimes 1$ and $r\mapsto 1\otimes r$, respectively.
Let $\m$ be a maximal $\Sigma$-ideal of $R\otimes_KR$. Then, $(R\otimes_KR)/\m$ is a Picard--Vessiot ring of equation~\eqref{eq:diffeq}. As in Proposition~\ref{prop:PVringunique}, the composition homomorphisms
$$
\overline{\phi}_i : R \to R\otimes_KR \to (R\otimes_KR)/\m
$$
are isomorphisms. This induces an automorphism of the ring $R$ defined by
$$
\phi_{\m} := \overline{\phi}_2^{-1}\circ\overline{\phi}_1.$$

\begin{proposition}\label{prop:MaxAut} The correspondence $\Aut_\Sigma(R/K) \to \Max_\Sigma(R\otimes_KR)$ constructed above is bijective. Moreover, these bijections are inverses of each other.
\end{proposition}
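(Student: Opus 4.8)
The plan is to verify directly that the two assignments described above, which I will write as $\Psi\colon\gamma\mapsto[F\otimes 1-1\otimes FM_\gamma]_\Sigma$ and $\Phi\colon\m\mapsto\phi_\m$, are mutually inverse. Both are already known to be well defined: the forward map $\Psi$ produces a maximal $\Sigma$-ideal because it is the kernel of the surjection $(\gamma,\Id)\colon R\otimes_K R\to R$ onto the $\Sigma$-simple ring $R$, and the backward map $\Phi$ produces an automorphism of $R$ because the composites $\overline{\phi}_1,\overline{\phi}_2$ are isomorphisms by the argument of Proposition~\ref{prop:PVringunique}. So it remains to check that $\Phi\circ\Psi=\Id$ and $\Psi\circ\Phi=\Id$. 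Throughout I would use Proposition~\ref{prop:nonewconst} to identify the $\sigma$-constants of every Picard-Vessiot ring in sight with $C$, together with the fact that two fundamental matrices of~\eqref{eq:diffeq} in a common $\Sigma$-simple ring differ by a matrix in $\GL_n(C)$.

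For $\Phi\circ\Psi=\Id$, I would start from $\gamma\in\Aut_\Sigma(R/K)$ and set $\m_\gamma:=[F\otimes 1-1\otimes FM_\gamma]_\Sigma=\ker(\gamma,\Id)$. The surjection $(\gamma,\Id)$ induces an isomorphism $\iota\colon(R\otimes_K R)/\m_\gamma\to R$, and since $(\gamma,\Id)$ sends $r\otimes 1\mapsto\gamma(r)$ and $1\otimes r\mapsto r$, unwinding the definitions gives $\iota\circ\overline{\phi}_1=\gamma$ and $\iota\circ\overline{\phi}_2=\Id$. Thus $\overline{\phi}_2=\iota^{-1}$ and $\overline{\phi}_1=\iota^{-1}\circ\gamma$, so $\phi_{\m_\gamma}=\overline{\phi}_2^{-1}\circ\overline{\phi}_1=\iota\circ\iota^{-1}\circ\gamma=\gamma$, as desired. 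This direction is purely formal.

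For $\Psi\circ\Phi=\Id$, I would start from a maximal $\Sigma$-ideal $\m$ and write $\pi\colon R\otimes_K R\to(R\otimes_K R)/\m$ for the quotient. Both $\overline{\phi}_1(F)=\pi(F\otimes 1)$ and $\overline{\phi}_2(F)=\pi(1\otimes F)$ are fundamental matrices in the Picard-Vessiot ring $(R\otimes_K R)/\m$, so there is a matrix $N$ with $\overline{\phi}_1(F)=\overline{\phi}_2(F)N$; by Proposition~\ref{prop:nonewconst} the $\sigma$-constants of this ring are $C$, whence $N\in\GL_n(C)$. Because $\overline{\phi}_2$ is a $K$-algebra homomorphism fixing $C$ entrywise, this reads $\overline{\phi}_1(F)=\overline{\phi}_2(FN)$, so applying $\overline{\phi}_2^{-1}$ yields $\phi_\m(F)=FN$; that is, $M_{\phi_\m}=N$. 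Consequently $\Psi(\phi_\m)=[F\otimes 1-1\otimes FN]_\Sigma$, and since $\pi(F\otimes 1-1\otimes FN)=\overline{\phi}_1(F)-\overline{\phi}_2(F)N=0$, the generator lies in $\m$, giving $[F\otimes 1-1\otimes FN]_\Sigma\subseteq\m$. Both ideals are maximal $\Sigma$-ideals, hence equal, so $\Psi(\phi_\m)=\m$.

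The one place where genuine content enters, rather than bookkeeping, is the need to know that the matrix relating two fundamental matrices in each quotient has entries in the constant ring $C$ and is therefore fixed by the relevant $K$-algebra maps; this is exactly what Proposition~\ref{prop:nonewconst} and the $\Sigma$-simplicity of $R$ supply, and it is also what makes the two maps inverse rather than merely mutually well defined. I expect no further obstacle: once the constants are pinned to $C$, both composites collapse to the identity by the computations above.
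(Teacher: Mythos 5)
Your proof is correct and follows essentially the same route as the paper's: both directions are verified by checking the composites on the fundamental matrix, using that $[F\otimes 1-1\otimes FM]_\Sigma$ is maximal (as the kernel of $(\gamma,\Id)$ onto the $\Sigma$-simple ring $R$) and that Proposition~\ref{prop:nonewconst} pins the constants of $(R\otimes_KR)/\m$ to $C$. Your handling of the direction $\Phi\circ\Psi=\Id$ via the induced isomorphism $\iota$ is a slightly more formal packaging of the paper's computation $\overline{\phi}_1(F)=F\otimes 1=1\otimes FM_\gamma=\overline{\phi}_2(FM_\gamma)$, but it is the same argument in substance.
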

\begin{proof} Let $\gamma\in \Aut_\Sigma(R/K)$ and $M \in \GL_n(C)$ be such that $\gamma(F) = FM$. Set $\m = [F\otimes 1 - 1\otimes FM]_\Sigma$.
Since $$\overline{\phi}_1(F) = F\otimes 1,\quad F\otimes 1  = 1\otimes FM\ \text{in}\ (R\otimes _KR)/\m,\ \text{and}\ \ \overline{\phi}_2(FM) = 1\otimes FM,$$ we have $\phi_{\m}(F)=FM$.

Conversely, let $\m \in \Max_\Sigma(R\otimes_KR)$. Then $\phi_{\m}(F) = FM$ for some $M \in\GL_n(C)$. Hence, $$\overline{\phi}_1(F) = \overline{\phi}_2(FM).$$ Thus, $$
[F\otimes1-1\otimes FM]_{\Sigma} \subset \m.$$
Since, as above, the former ideal is $\Sigma$-maximal, it coincides with $\m$.
 \end{proof}
\begin{proposition}\label{prop:isomCG} The Galois group $G$ of equation~\eqref{eq:diffeq} is a closed subgroup of $\GL_n(C)$. Moreover, if the ring $R\otimes_KR$ is reduced, then
$$
R\otimes_KR \cong R\otimes_CC\{G\},$$
where $C\{G\}$ is the ring of regular functions on $G$ and $R$ is a Picard--Vessiot ring of~\eqref{eq:diffeq}.
\end{proposition}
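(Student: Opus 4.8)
The plan is to analyze the ring $S := R \otimes_K R$ and to show it is a trivial torsor, i.e.\ that $S \cong R \otimes_C D$ for a $\Sigma_1$-finitely generated $C$-algebra $D$ carrying the trivial $\sigma$-action, after which both $G$ and $C\{G\}$ are read off from $D$. First I set $Z := (F \otimes 1)^{-1}(1 \otimes F) \in \GL_n(S)$. Since the entries of $A$ lie in $K$, which is identified diagonally in $S$, both $F \otimes 1$ and $1 \otimes F$ are fundamental matrices of~\eqref{eq:diffeq} over $S$, so $\sigma Z = Z$ and hence $Z \in \GL_n(S^\sigma)$.

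Next I would build a $\Sigma$-homomorphism $\Psi \colon R \otimes_C C\{Z_{ij}, 1/\det Z\}_{\Sigma_1} \to S$ that is the inclusion $r \mapsto r \otimes 1$ on $R$ and sends the $\Sigma_1$-generators $Z_{ij}$ to the entries of $Z$ (here the second factor carries the trivial $\sigma$-action, which is compatible since $Z \in \GL_n(S^\sigma)$). This $\Psi$ is surjective: from $1 \otimes F = (F \otimes 1)Z$ one gets every entry of $1 \otimes F$, hence of $1 \otimes (1/\det F)$, in the image, and since the image is $\Sigma$-stable and $R$ is $\Sigma_1$-generated over $K$ by the entries of $F$ and $1/\det F$, the image contains $1 \otimes R$ as well as $R \otimes 1$, so it is all of $S = (R \otimes 1)(1 \otimes R)$. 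Because $R$ is $\Sigma$-simple with $R^\sigma = C$ a $\Sigma_1$-closed pseudofield (Proposition~\ref{prop:nonewconst}), Proposition~\ref{prop:idealcorrespondance} applies to the source with the trivial-$\sigma$ algebra $C\{Z_{ij}, 1/\det Z\}_{\Sigma_1}$ and forces $\ker \Psi = R \otimes_C J$ for a $\Sigma_1$-ideal $J$. Thus $S \cong R \otimes_C D$ with $D := C\{Z_{ij}, 1/\det Z\}_{\Sigma_1}/J$; since $C$ is absolutely flat, $D$ is $C$-flat, and tensoring the defining sequence of $R^\sigma$ by $D$ identifies $S^\sigma$ with $R^\sigma \otimes_C D = D$.

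From here the group statement follows formally and unconditionally. The ideal correspondence of Proposition~\ref{prop:idealcorrespondance} identifies $\Max_\Sigma(S)$ with the maximal $\Sigma_1$-ideals of $D$, and because $C$ is $\Sigma_1$-closed every such ideal has residue ring $C$ by \cite[Proposition~14]{Dima}, so it corresponds to a $C$-point of $\V(J) \subseteq \GL_n(C)$. Composing with the bijection $\Aut_\Sigma(R/K) \cong \Max_\Sigma(S)$ of Proposition~\ref{prop:MaxAut} and tracking images, the maximal ideal attached to $\gamma$ sets $Z = M_\gamma$, where $\gamma(F) = FM_\gamma$, so the resulting set-theoretic identification is exactly the group embedding $\gamma \mapsto M_\gamma$. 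Hence the image of $G$ in $\GL_n(C)$ is the $\Sigma_1$-closed set $\V(J)(C)$; being simultaneously a subgroup and a $\Sigma_1$-closed set, $G$ is a closed $C$-$\Sigma_1$-algebraic subgroup of $\GL_{n,\Sigma_1}(C)$, which proves the first assertion.

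Finally, for the displayed isomorphism I would invoke the reducedness hypothesis: if $S = R \otimes_K R$ is reduced, then its subring $D \cong S^\sigma$ is reduced, so $J$ is a radical $\Sigma_1$-ideal. Since $C$ is $\Sigma_1$-closed, Definition~\ref{def:diffclosedring} gives $J = \sqrt{J} = \I(\V(J)) = \I(G)$, whence $D = C\{Z_{ij}, 1/\det Z\}_{\Sigma_1}/\I(G) = C\{G\}$ and $S \cong R \otimes_C C\{G\}$. I expect the main obstacle to be the second paragraph: verifying that $\Psi$ is a genuinely well-defined, surjective $\Sigma$-homomorphism and then correctly invoking Proposition~\ref{prop:idealcorrespondance} to pin the kernel down as $R \otimes_C J$. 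This is where the torsor structure is actually produced; everything afterward is bookkeeping, namely matching the point $Z = M_\gamma$ with the automorphism $\gamma$, together with the $\Sigma_1$-Nullstellensatz for $\Sigma_1$-closed $C$.
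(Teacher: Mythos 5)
Your proposal is correct and follows essentially the same route as the paper's proof: the matrix $Z=(F\otimes 1)^{-1}(1\otimes F)$ is exactly the paper's change of variables $Y=F^{-1}X$ transported to $R\otimes_K R$, and both arguments then rest on the same three ingredients --- the ideal correspondence of Proposition~\ref{prop:idealcorrespondance} to write $R\otimes_K R\cong R\otimes_C D$ with $D$ a quotient of $C\{Y_{ij},1/\det Y\}_{\Sigma_1}$ by a $\Sigma_1$-ideal $J$, the bijection of Proposition~\ref{prop:MaxAut} together with \cite[Proposition~14]{Dima} to identify $G$ with the $\Sigma_1$-closed set of $C$-points of $\V(J)$, and reducedness of $R\otimes_K R$ to conclude $J$ is radical so that $D=C\{G\}$. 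The only additions beyond the paper's argument (the explicit verification that $S^\sigma\cong D$ and the tracking of $Z=M_\gamma$) are harmless elaborations of steps the paper leaves implicit.
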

\begin{proof}
As before, define $\sigma$ on the $\Sigma_1$-ring
$
R\{X_{ij},1/\det X\}_{\Sigma_1}$
by
$\sigma X = AX$. Let $F$ be a fundamental matrix of~\eqref{eq:diffeq} with coefficients in $R$ and let, as above, $Y = F^{-1}X$,
which implies that $\sigma Y = Y$. We have a $\Sigma$-isomorphism
$$
R\otimes_KK\{X_{ij},1/\det X\}_{\Sigma_1} \cong R\otimes_CC\{Y_{ij},1/\det Y\}_{\Sigma_1}.
$$
As in the proof of Proposition~\ref{prop:LQtR}, this induces a $\Sigma$-isomorphism
\begin{equation}\label{eq:RBisomorphism}
R\otimes_KR \cong R\otimes_C B,
\end{equation}
where $B = C\{Y_{ij},1/\det Y\}_{\Sigma_1}/J$ and $J$ is a $\Sigma_1$-ideal.

By Proposition~\ref{prop:MaxAut}, $\Aut_\Sigma(R/K)$ as a set can be identified with $\Max_\Sigma(R\otimes_KR)$. The latter set, by Proposition~\ref{prop:idealcorrespondance} and isomorphism~\eqref{eq:RBisomorphism}, can be identified with $\Max_{\Sigma_1}B$. Since $C$ is $\Sigma_1$-closed,  by \cite[Proposition~14]{Dima}, the set $\Max_{\Sigma_1} B$ can be identified with
a closed subset of $\GL_n(C)$.  The group structure of $G$ is preserved under this identification due to~\eqref{eq:gamma}. If the ring $R\otimes_KR$ is reduced, then the ideal $J$ is radical and, therefore, $B$ is the coordinate ring of $G$.
 \end{proof}

\subsection{Galois correspondence}
\begin{proposition}\label{prop:LGK} Let $L$ be a Picard--Vessiot pseudofield of equation~\eqref{eq:diffeq}, $R$ be its Picard--Vessiot ring, and $G$ be its Galois group. If the ring $R\otimes_K R$ is reduced, then
$
L^G = K$.
\end{proposition}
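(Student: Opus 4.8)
The plan is to prove the inclusion $L^G \supseteq K$ trivially (elements of $K$ are fixed by all of $G$ by definition of $G = \Aut_\Sigma(L/K)$) and to concentrate on the reverse inclusion $L^G \subseteq K$. The strategy is to exploit the explicit description of $R \otimes_K R$ from Proposition~\ref{prop:isomCG}. Under the reducedness hypothesis on $R \otimes_K R$, we have the $\Sigma$-isomorphism $R \otimes_K R \cong R \otimes_C C\{G\}$, where $C\{G\}$ is the coordinate ring of the Galois group. Working inside $L = \Qt(R)$, I would first reduce to an element $b \in L^G$ and aim to show $b \in K$.

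The key idea is to pass from $R$ to its total quotient ring and reformulate ``$b$ is fixed by $G$'' as a statement about the two embeddings $\phi_1, \phi_2 : R \to R \otimes_K R$. Concretely, I would consider the element $b \otimes 1 - 1 \otimes b \in L \otimes_K L$ (or its image in $L \otimes_C C\{G\}$ after localizing the isomorphism of Proposition~\ref{prop:isomCG}). Being $G$-invariant should translate, via the bijection of Proposition~\ref{prop:MaxAut} between $\Aut_\Sigma(R/K)$ and $\Max_\Sigma(R \otimes_K R)$, into the statement that $b \otimes 1 - 1 \otimes b$ lies in every maximal $\Sigma$-ideal of $R \otimes_K R$; since that ring is reduced and its maximal $\Sigma$-ideals correspond under the identification $R \otimes_K R \cong R \otimes_C C\{G\}$ to the points of $G$, this forces $b \otimes 1 - 1 \otimes b = 0$ in $R \otimes_C C\{G\}$ (using that $C$ is $\Sigma_1$-closed, so that $G$ has ``enough points'' and the radical ideal $J$ equals the intersection of the maximal $\Sigma_1$-ideals, by the difference-closedness of $C$ in Definition~\ref{def:diffclosedring}).

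From $b \otimes 1 = 1 \otimes b$ in $R \otimes_K R$, faithful flatness does the rest: because $R$ is a faithfully flat (indeed free, via Proposition~\ref{prop:26}) $K$-module after suitable base considerations, the equalizer of the two maps $\phi_1, \phi_2 : R \to R \otimes_K R$ is exactly $K$, so $b \in K$. The point here parallels the argument in Proposition~\ref{prop:idealcorrespondance}, where the descent/faithful-flatness machinery was set up; I would invoke the same $\Sigma_1$-invariant basis $\{l_i\}$ of $R$ over $C$ together with the structure of $R \otimes_K R$ to conclude that an element whose two tensor insertions agree must come from the base $K$.

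The main obstacle I anticipate is handling the zero-divisors: $L$ is only a pseudofield (a product of fields), not a field, so ``$b$ fixed by every $G$-automorphism'' does not immediately give a clean Galois-descent statement as it would over a field. The delicate step is ensuring that $G$-invariance really detects membership in the base through the idempotent structure of $C$ and $L$; one must check that the correspondence of Proposition~\ref{prop:MaxAut}, which is a statement about maximal $\Sigma$-ideals, captures \emph{all} the invariance information and that reducedness of $R \otimes_K R$ (so that $J$ is radical and $B = C\{G\}$) is genuinely used to pass from ``vanishing at all points of $G$'' to ``being the zero element.'' Once this geometric point is secured, invoking Theorem~\ref{thm:nonewconstants} on $R\{b\}$ where needed and the faithful flatness of $R$ over $K$ should close the argument without further subtlety.
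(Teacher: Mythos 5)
Your proposal is correct, and structurally it is the paper's own proof run in the opposite (direct, rather than contrapositive) direction: both arguments rest on the same three pillars, namely Proposition~\ref{prop:MaxAut} to translate between $G$-invariance of a fraction and membership of $d = a\otimes b - b\otimes a$ in maximal $\Sigma$-ideals of $R\otimes_K R$, the isomorphism $R\otimes_KR\cong R\otimes_CC\{G\}$ of Proposition~\ref{prop:isomCG} (the only place reducedness is used), and the difference Nullstellensatz over the $\Sigma_1$-closed pseudofield $C$ (the paper cites \cite[Proposition~34]{Dima}) combined with the freeness of $R$ over $C$ from Proposition~\ref{prop:26} to produce, for a nonzero $d$, a maximal $\Sigma$-ideal avoiding it. The genuine difference is in the remaining step, the equivalence ``$d=0 \iff$ the fraction lies in $K$'': the paper proves the direction it needs by hand, decomposing $K$ into indecomposable idempotents $e_i$ and showing that for $a/b\notin K$ some pair $e_ia$, $e_ib$ is linearly independent over the field $e_iK$, whence $d\neq 0$; you instead invoke faithfully flat descent, i.e., exactness of $0\to K\to L\to L\otimes_KL$. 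That substitution is legitimate: $K$ is a finite product of fields, so every $K$-module is flat, and faithfulness holds because the idempotents of $K$ remain nonzero in $L$. Your route buys a shorter, citation-level conclusion; the paper's computation keeps the argument self-contained and makes visible exactly how the idempotent structure of the pseudofield $K$ enters.

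Two slips should be repaired in a written version, though neither is fatal. First, your parenthetical ``indeed free, via Proposition~\ref{prop:26}'' misattributes that result: it gives $\Sigma_1$-invariant bases of modules over the difference-closed pseudofield $C$, i.e., freeness over $C$, not over $K$; indeed $R$ need not be free over $K$ (over $K=F\times F$ the module $F\times 0$ is not free), but flatness plus faithfulness is all descent requires, and both hold as noted above. Second, an invariant element $b\in L^G$ is a fraction, so $b\otimes 1-1\otimes b$ lives in $L\otimes_KL$, whereas Proposition~\ref{prop:MaxAut} concerns $\Max_\Sigma(R\otimes_KR)$; you must clear denominators, writing $b=p/q$ with $q$ a non-zero-divisor and checking that $\phi_{\m}(b)=b$ if and only if $p\otimes q-q\otimes p\in\m$ (the same computation the paper performs at the end of its proof), or else justify that the maximal-ideal correspondence survives localization to $L\otimes_KL$. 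You flag both issues yourself, so these are matters of execution rather than gaps in the idea.
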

\begin{proof}
Let
\begin{equation}\label{eq:abnotinK}
a/b \in L\setminus K,
\end{equation} where $a$, $b \in R$ and $b$ is not a zero divisor. Set
$$
d = a\otimes b - b\otimes a \in R\otimes_KR.$$
We will show that $d \ne 0$. For this, let $\{e_1,\ldots,e_n\}$ be all indecomposable idempotents of the Noetherian $\Sigma$-pseudofield $K$. Since $b$ is not a zero divisor,
\begin{equation}\label{eq:eibne0}
e_i\cdot b \ne 0,\quad 1\Le i\Le n.
\end{equation}
Suppose that for each $i$, $1\Le i\Le n$, $e_i\cdot a$ and $e_i\cdot b$ are linearly dependent over $e_iK$, that is,
$
\lambda_i\cdot e_i\cdot a=\mu_i\cdot e_i\cdot b$
for all $i$. Then~\eqref{eq:eibne0} implies that $\lambda_i \ne 0$, $1\Le i \Le n$. Since $e_iK$ is a field, we have
$
e_i\cdot a = e_i\cdot b\cdot\mu_i/\lambda_i$.
Hence,
$$
a =\sum_{i=1}^ne_ia=\left(\sum_{i=1}^n\frac{\mu_i}{\lambda_i}e_i\right)b,\quad \text{that is},\quad
\frac{a}{b} = \sum_{i=1}^n\frac{\mu_i}{\lambda_i}e_i \in K,
$$
which is a contradiction to~\eqref{eq:abnotinK}. Therefore, there exists $i$, $1\Le i\Le n$, such that
$e_i\cdot a$ and $e_i\cdot b$ are linearly independent over $e_iK$.
Then,
$$e_i\cdot a\otimes e_i\cdot b-e_i\cdot b\otimes e_i\cdot a \ne 0$$ in $e_iR\otimes_{e_iK}e_iR$.
Hence,
\begin{equation*}\label{eq:abba}
 a\otimes b - b\otimes a \ne 0\quad\text{in}\quad R\otimes_KR.
\end{equation*}
We will now show that there is a maximal $\Sigma$-ideal in $R\otimes_K R$ that does not contain $d$. Since $R\otimes_KR$ is reduced, then by Proposition~\ref{prop:isomCG} we have
$
R\otimes_KR \cong R\otimes_CC\{G\}$.
Let $\{l_i\}_{i\in\mathcal{I}}$ be a basis of $R$ over $K$. Then there exist $r_1,\ldots,r_m \in C\{G\}$ such that
$$
d = l_1\otimes r_1+\ldots l_m\otimes r_m.$$
Since $r_1$ is not nilpotent, there exists a maximal $\Sigma_1$-ideal $\m \subset C\{G\}$ such that $\overline{r}_1\ne 0$ in $C\{G\}/\m$. Then image of $d$ in $R\otimes_C C\{G\}/\m \cong R$ is
$$
\overline{d} = l_1\overline{r}_1+\ldots +l_m\overline{r}_m.$$
Since $\overline{r}_1\ne0$, we have $\overline{d} \ne 0$. Thus, $d \notin R\otimes_C\m$.
Using the correspondence between maximal $\Sigma$-ideals in $R\otimes_KR$
and $\Sigma$-automorphisms of $R$ over $K$, let
$$
\phi_{\m} = \overline{\phi}_2^{-1}\circ\overline{\phi}_1$$
correspond to $\m$ as in the proof of Proposition~\ref{prop:MaxAut}. Then our
choice of $\m$ implies that
\begin{equation}\label{eq:ne0}
 (R\otimes_K R)/\m \ni
\overline{\phi}_1(a)\overline{\phi}_2(b)-\overline{\phi}_1(b)\overline{\phi}_2(a)
\ne 0.
\end{equation}
Applying $\overline{\phi}_2^{-1}$ to both sides of~\eqref{eq:ne0},
we obtain that
$
\phi_{\m}(a)b-\phi_{\m}(b)a \ne 0$.
Therefore, $\phi_{\m}\left(\frac{a}{b}\right) \ne \frac{a}{b}$.
 \end{proof}

\begin{lemma}\label{lem:transitive} Let $K\subset L$ be Noetherian $\Sigma$-pseudofields. Let $H \subset \Aut_\Sigma(L)$ such that $L^H=K$. Suppose that
$
K \cong \prod_{i=1}^n F_{\Sigma_1}(F)
$
as $\Sigma_1$-rings, where $F$ is a field. Let $\{e_i\}$ be the corresponding idempotents. Then for each $i$ the abstract group generated by $\Sigma_1$ and $H$ acts transitively on the set of indecomposable idempotents of the ring $e_iL$.
\end{lemma}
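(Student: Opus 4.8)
The plan is to form, for a given indecomposable idempotent, the sum of its orbit under the group $G$ generated by $\Sigma_1$ and $H$, show that this orbit-sum is forced to equal $e_i$, and deduce transitivity from the fact that the indecomposable idempotents of $e_iL$ already sum to $e_i$. First I would check that $G$ genuinely acts on $e_iL$ by ring automorphisms. Each $h\in H$ fixes $K=L^H$ pointwise, so it fixes $e_i$ and restricts to a ring automorphism of $e_iL$; and since the $i$-th factor $e_iK\cong F_{\Sigma_1}(F)$ is a $\Sigma_1$-subring, $e_i$ is $\Sigma_1$-invariant, so each $\mu\in\Sigma_1$ likewise restricts to a ring automorphism of $e_iL$. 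Because $L$ is a Noetherian pseudofield it is absolutely flat, hence so is its direct factor $e_iL$; being Noetherian and absolutely flat, $e_iL$ is a finite product of fields (as in the proof of Proposition~\ref{prop:Noetherian}). In particular it has only finitely many indecomposable idempotents, they are pairwise orthogonal, and their sum is $e_i$.

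Next I would fix an indecomposable idempotent $f$ of $e_iL$ and let $O$ be its $G$-orbit. Since ring automorphisms carry indecomposable idempotents to indecomposable idempotents, $O$ is a finite set of pairwise orthogonal indecomposable idempotents of $e_iL$, so $g:=\sum_{f'\in O}f'$ is an idempotent with $ge_i=g$. Because $G$ permutes $O$, the element $g$ is $G$-invariant; in particular it is $H$-invariant, whence $g\in L^H=K$, and it is $\Sigma_1$-invariant. Thus $g$ is a nonzero $\Sigma_1$-invariant idempotent of the ring $e_iK$.

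The heart of the argument is then the computation inside $e_iK\cong F_{\Sigma_1}(F)=\prod_{\mu\in\Sigma_1}F$. Its indecomposable idempotents are the indicator functions $\delta_\mu$, and the action defined in~\eqref{eq:defofFSigma} gives the regular permutation $\mu\cdot\delta_\nu=\delta_{\mu\nu}$, which is transitive; therefore the only $\Sigma_1$-invariant idempotents of $e_iK$ are $0$ and $e_i$. Since $g\neq 0$, we conclude $g=e_i$. As the indecomposable idempotents of $e_iL$ also sum to $e_i$, the nonzero pairwise orthogonal idempotents in $O$ have the same total as all of them, which forces $O$ to be the complete set. Hence $G$ acts transitively on the indecomposable idempotents of $e_iL$, as required.

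I expect the only genuine subtlety to be the bookkeeping of the first step: one must verify that $e_i$ is simultaneously fixed by $H$ (because $e_i\in K$) and by $\Sigma_1$ (because it is the idempotent cutting out a $\Sigma_1$-stable factor $F_{\Sigma_1}(F)$), so that $G$ really acts on $e_iL$, together with the observation that the orbit-sum descends into $K$. Once $g\in K$ is known to be $\Sigma_1$-invariant, the transitivity of $\Sigma_1$ on the idempotents of $F_{\Sigma_1}(F)$ closes the argument immediately.
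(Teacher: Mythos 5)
Your proof is correct and follows essentially the same route as the paper's: form the orbit-sum of an indecomposable idempotent under the group generated by $\Sigma_1$ and $H$, use $H$-invariance to place it in $e_iK\cong F_{\Sigma_1}(F)$, use $\Sigma_1$-invariance to force it to equal $e_i$, and conclude the orbit exhausts the indecomposable idempotents. The only cosmetic difference is in the last micro-step: the paper argues that a nonzero $\Sigma_1$-invariant idempotent generates a $\Sigma_1$-ideal in the $\Sigma_1$-simple ring $F_{\Sigma_1}(F)$ and hence must be $1$, whereas you compute the invariant idempotents explicitly via the regular action on the indicators $\delta_\mu$; both are valid and equivalent.
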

\begin{proof} Let $e \in e_iL$ be an idempotent and $S$ be its $\Sigma_1 * H$-orbit. The set $S$ coincides with the set of indecomposable idempotents if and only if
$
\sum_{f \in S} f = 1$.
This sum is $H$-invariant and, therefore, it belongs to $F_{\Sigma_1}(F)$. Since it is $\Sigma_1$-invariant as well, it is equal to $1$, because a $\Sigma_1$-invariant
idempotent of $F_{\Sigma_1}(F)$ generates a $\Sigma_1$-ideal.
 \end{proof}

\begin{proposition}\label{prop:HG} Let $L$ be a Picard--Vessiot pseudofield for equation~\eqref{eq:diffeq} and $H$ be a closed subgroup of the Galois group $G$. Then $L^H = K$ implies $H = G$.
\end{proposition}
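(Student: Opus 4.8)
The plan is to prove the contrapositive: if $H$ is a \emph{proper} closed subgroup of $G$, then $L^H\supsetneq K$, contradicting $L^H=K$. Write $B=C\{G\}$ for the coordinate ring of the Galois group and $I=\I(H)\subseteq B$ for the radical $\Sigma_1$-Hopf ideal cutting out $H$. Since $H$ is already closed and $C$ is $\Sigma_1$-closed, the statement $H=G$ is equivalent to $I=(0)$, so everything reduces to showing that a nonzero $I$ forces an $H$-invariant element of $L$ outside $K$.

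First I would set up the invariant-theoretic dictionary. Using Proposition~\ref{prop:isomCG} I have a $\Sigma$-isomorphism $R\otimes_K R\cong R\otimes_C B$, under which the second inclusion $r\mapsto 1\otimes r$ becomes a coaction $\rho\colon R\to R\otimes_C B$ encoding the $G$-action: for $\gamma\in G$ with associated point of $\Max_{\Sigma_1}B$ one recovers $\gamma(r)$ by evaluating $\rho(r)$ at that point, exactly as in Propositions~\ref{prop:MaxAut} and~\ref{prop:LGK}. Because $C$ is $\Sigma_1$-closed and the rings in sight are reduced, the intersection of the maximal $\Sigma_1$-ideals of $C$-points of $H$ is $I$ in the reduced ring $C\{H\}=B/I$; together with flatness of $R$ over the product of fields $C$, this identifies $R^H$ with $\{r\in R:\rho(r)-r\otimes 1\in R\otimes_C I\}$. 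The hypothesis $L^H=K$ then gives $R^H=R\cap L^H=K$ (note $K\subseteq R$), while the same description with $I=(0)$, via faithful flatness of $R$ over the finite product of fields $K$, recovers $R^G=K$.

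The core step is to convert $R^H=K$ into $I=(0)$. Here I would exploit that $\Spec R\to\Spec K$ is a $G$-torsor: $R$ is faithfully flat over $K$ and $R\otimes_K R\cong R\otimes_C B$. Passing to $H$-invariants on the second factor, faithfully flat descent should yield $R^H\otimes_K R\cong R\otimes_C\mathcal{O}$, where $\mathcal{O}$ is the coordinate ring of the quotient $H\backslash G$; this formula is consistent with the two extreme cases $H=\{e\}$ (recovering Proposition~\ref{prop:isomCG}) and $H=G$ (giving $R$). Feeding in $R^H=K$ and cancelling the faithfully flat factor $R$ forces $\mathcal{O}=C$, i.e.\ $H\backslash G$ is a single point and $I=(0)$. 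To run this in the pseudofield setting, where $K$ and $L$ carry many extra idempotents, I would first use Proposition~\ref{prop:product} to write $K\cong\prod_{i=1}^n F_{\Sigma_1}(F)$ and reduce to a single idempotent component $e_iL$; on that component Lemma~\ref{lem:transitive} guarantees that the abstract group generated by $\Sigma_1$ and $H$ already acts transitively on the indecomposable idempotents, so no discrete part of $G$ is lost upon restriction, and the descent argument can be carried out over honest fields.

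I expect the main obstacle to be precisely this last conversion --- showing that a proper $H$ forces the invariant ring to grow, $R^H\supsetneq K$ --- rather than the formal dictionary, which is routine given Propositions~\ref{prop:idealcorrespondance} and~\ref{prop:isomCG}. The difficulty is that the clean torsor-quotient formalism available over a field must be transported to the difference-pseudofield situation with zero divisors, where the ``size'' of $H\backslash G$ is a difference-algebraic notion and the geometric quotient need not be as well behaved; controlling the idempotents through Lemma~\ref{lem:transitive} and Proposition~\ref{prop:product}, and tracking faithful flatness through each reduction, is where the real work lies. A secondary point to watch is the reducedness hypothesis required to invoke Propositions~\ref{prop:isomCG} and~\ref{prop:LGK}, which must be in force throughout the argument.
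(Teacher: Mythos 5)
Your reduction from $L^H=K$ to the ring-level statement $R^H=K$ is where the argument breaks, and it breaks irreparably: the implication you need at the end --- ``$\mathcal{O}=C$, i.e.\ $H\backslash G$ is a single point and $I=(0)$'' --- is false. Your own descent computation shows (correctly, granting flatness) that $R\otimes_K R^H\cong R\otimes_C B^H$ where $B=C\{G\}$, so that $R^H=K$ is \emph{equivalent} to $B^H=C$. But a proper closed subgroup can perfectly well have a trivial ring of invariant functions: already for ordinary algebraic groups, if $H$ is a Borel (or any parabolic) subgroup of a reductive $G$, then $G/H$ is a complete variety, so $\mathcal{O}(G)^H=\mathcal{O}(G/H)=C$ while $H\subsetneq G$. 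The same phenomenon occurs for the $\Sigma_1$-groups of this paper, since over the $\Sigma_1$-closed pseudofield $C$ they are componentwise ordinary algebraic groups over an algebraically closed field permuted by $\Sigma_1$ (take $G=\GL_{n,\Sigma_1}$, $n\Ge 2$, and $H$ the upper-triangular subgroup, cut out by the order-zero difference polynomials $x_{ij}=0$, $i>j$). For such $H$ one has $R^H=K$ but $H\neq G$, and the Galois correspondence only survives because $L^H\supsetneq K$: there are $H$-invariant fractions $a/b\in L=\Qt(R)$ with neither $a$ nor $b$ invariant. By passing from $L^H=K$ to $R^H=K$ at the outset, you discard exactly the information that distinguishes proper subgroups, so no amount of care with idempotents, flatness, or Lemma~\ref{lem:transitive} can rescue the strategy. (A secondary, also unaddressed, issue is that the existence of $H\backslash G$ as an affine difference variety with the torsor-descent property is never established in this setting; but the argument fails even if one grants it, with $\mathcal{O}$ interpreted as $B^H$.)

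The paper's proof is structured precisely to avoid this trap: it never takes invariants of $R$. Instead it works over the pseudofield $L$, comparing the extended defining ideals $(I)\subsetneq (J)$ of $G$ and $H$ inside $L\{X_{ij},1/\det X\}_{\Sigma_1}$ (the proper containment coming from Proposition~\ref{prop:idealcorrespondance}), and runs a minimal-presentation argument: choose $Q\in (J)\setminus (I)$ of shortest length with respect to a $\Sigma_1$-invariant basis of $K\{X_{ij},1/\det X\}_{\Sigma_1}$, use Proposition~\ref{prop:product} and the transitivity statement of Lemma~\ref{lem:transitive} to average $Q$ into a $Q'\in (J)\setminus(I)$ with invertible leading coefficients, and then show by minimality that $h(Q')=Q'$ for all $h\in H$. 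Only at that point is the hypothesis used: the coefficients of $Q'$ lie in $L^H=K$, hence are $G$-invariant, so $Q'$ vanishes on all points $FM_g$, $g\in G$, forcing $Q'\in (I)$ --- a contradiction. The essential difference from your plan is that the invariant object produced is a relation with coefficients in $L$ (where $L^H=K$ has real content), not an invariant element of $R$ (where $R^H=K$ has none).
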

\begin{proof}
As before, let $F$ be a fundamental matrix with entries in $L$ and $\sigma X = AX$ define the action of $\Sigma$ on the $\Sigma_1$-ring $D := L\{X_{ij},1/\det X\}_{\Sigma_1}$. Let also $Y = F^{-1}X$. Again, as before,
$$
L\otimes_KK\{X_{ij},1/\det X\}_{\Sigma_1} \cong L\otimes_CC\{Y_{ij},1/\det Y\}_{\Sigma_1}.
$$
Suppose that $H \subsetneq G$ and let $I\subsetneq J$ be the defining ideals of $G$ and $H$, respectively. Denote their extensions to $L\{X_{ij}, 1/\det X\}$ by $(I)$ and $(J)$, respectively. By Proposition~\ref{prop:idealcorrespondance}, we have
$
(I) \subsetneq (J)$.
Explicitly, we have
$$
(I) = \left\{f(X) \in L\{X_{ij},1/\det X\}_{\Sigma_1}\:|\: f(FM)=0\ \text{for all}\ M\in G\right\}
$$
and
\begin{equation}\label{eq:idealJ}
(J) = \left\{f(X) \in L\{X_{ij},1/\det X\}_{\Sigma_1}\:|\: f(FM)=0\ \text{for all}\ M\in H\right\}.
\end{equation}
Let $T = (J)\setminus (I) \ne \varnothing$.
Define the action of $H$ on $L\otimes_KK\{X_{ij},1/\det X\}_{\Sigma_1}$ by
$$
h(a\otimes b) = h(a)\otimes b,\quad h\in H.$$
Then, equality~\eqref{eq:idealJ} implies that $(J)$ is stable under this action of $H$. By Proposition~\ref{prop:product},
$$
K \cong F_{\Sigma_1}(F)\times\ldots\times F_{\Sigma_1}(F)
$$
as $\Sigma_1$-rings, where $F$ is a field. Let $e_1,\ldots, e_n$ be the idempotents corresponding to the components $F_{\Sigma_1}(F)$ in the above product. By Proposition~\ref{prop:26},  the ring $K\{X_{ij},1/\det X\}_{\Sigma_1}$ has a $\Sigma_1$-invariant basis $\{Q_\alpha\}$. Then every element of the ring $D$ is of the form
\begin{equation}\label{eq:Qsum}
Q = q_1Q_{\alpha_1}+\ldots+q_nQ_{\alpha_n},
\end{equation}
where $q_i \in L$, $1\Le i\Le n$. Let $Q$ be an element in $T$ with the shortest presentation of the form~\eqref{eq:Qsum}. Since
$
Q =\sum_i e_iQ$,
there exists $i$ such that $e_iQ \in T$. Denote the latter polynomial by $Q$ as well. Now, we have $Q \in e_iD$.
Let $\{f_1,\ldots, f_m\}$ be all indecomposable idempotents of the Noetherian ring $e_iL$. Then,
$$
Q = \sum\nolimits_{j=1}^mf_jQ.$$
Hence, there exists $j$ such that $f_jQ \in T$. By Lemma~\ref{lem:transitive}, there exist $h_t \in \Sigma_1 * H$ such that the coefficients of
$$
Q' := \sum_th_t(Q)$$
are invertible in $e_iL$. Therefore,
$$
Q' = e_iQ_1+g_2Q_2+\ldots+g_mQ_m.$$
Since the ideal $(J)$ is stable under the action of
 $\Sigma_1*H$, we have $Q'
\in T$. Since $e_i \in K$, for every $h \in H$ the polynomial
$
Q''_h := Q'-h(Q')$
has a shorter presentation than $Q$ and, therefore, $Q''_h \notin T$. That is,
\begin{equation}\label{eq:QinI}
Q''_h \in (I)\quad\text{for all}\quad h \in H.
\end{equation}
We will show now that $Q''_h = 0$ for all $h \in H$. Suppose that
$Q''_h \ne 0$ for some $h \in H$. Then~\eqref{eq:QinI} implies that
there exists $j$ such that $0 \ne f_jQ''_h \in (I)$. Since
$\Sigma_1*H$ acts
transitively on the indecomposable idempotents of $e_iL$, there
exist $\phi_t \in
\Sigma_1*H$  such that
$$
\overline{Q}_h := \sum_t \phi_t\left(Q''_h\right) = r_2Q_2+\ldots+r_mQ_m \in (I),$$
where $r_2$ is invertible in $e_iL$. Therefore, there exists $r \in e_iL$ such
that $g_2 = rr_2$. Then, the polynomial $
Q'-r\overline{Q} \in T$
has a shorter presentation than $Q'$, which is a contradiction.
We have shown that $h(Q') = Q'$
for all $h \in H$. Hence, all coefficients of $Q'$ are in $K$ and, therefore,
are invariant under the action of $G$ as well. Since $0 = Q'(F\cdot \id) = Q'(F)$, we have
$$
0 = g(Q'(F)) = g(Q')(FM_g)=Q'(FM_g)
$$
for all $g \in G$. Thus, $Q' \in (I)$, which contradicts to $Q' \in T$.
 \end{proof}

\begin{lemma}\label{lem:DFH}
Let $M$ be a field,
\begin{equation}\label{eq:DLL}
D := M\times\ldots\times M,
\end{equation} $F \subset D$ be a subfield and $H \subset \Aut(D)$ with $D^H = F$. Let $f := (1,0,\ldots,0) \in D$ and $H_1 \subset H$ be the stabilizer of $f$. Then,
$
fF = M^{H_1}$,
where $M$ is from the first component in~\eqref{eq:DLL}.
\end{lemma}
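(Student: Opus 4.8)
The plan is to prove the set equality $fF = M^{H_1}$ with both sides viewed inside the first factor. Write the finitely many components of $D$ as $M_1,\ldots,M_n$ with corresponding minimal idempotents $e_1,\ldots,e_n$, so that $f = e_1$ and $M = fD = M_1$. The first thing I would record is that every ring automorphism of $D$ permutes the minimal idempotents, so $H$ acts on $\{e_1,\ldots,e_n\}$, and this action is transitive: if it had two or more orbits, the sum of the $e_i$ over one orbit would be a nontrivial $H$-invariant idempotent, hence an idempotent of $D^H = F$ other than $0$ and $1$, contradicting that $F$ is a field. Since every $h\in H_1$ fixes $e_1$, it restricts to an automorphism of $M_1 = e_1 D$, so $H_1$ acts on $M$ and $M^{H_1}$ is meaningful.

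First I would dispatch the easy inclusion $fF \subseteq M^{H_1}$. For $x\in F = D^H$ and $h\in H_1$ we have $h(fx) = h(e_1)h(x) = e_1 x = fx$, so $fx\in M_1$ is fixed by $H_1$, i.e. $fx\in M^{H_1}$.

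The heart of the argument is the reverse inclusion $M^{H_1}\subseteq fF$, for which I would use a spreading construction. Given $m\in M^{H_1}$, I want an $H$-invariant $x\in D$ (hence $x\in F$) with first component $m$. Using transitivity, choose for each $j$ an element $h_j\in H$ with $h_j(e_1) = e_j$, taking $h_1 = \id$, and set $x := \sum_{j=1}^n h_j(m)$; since $h_j(m)\in e_j D = M_j$, the $j$-th component of $x$ is $h_j(m)$ and the first is $m$. I then check that $x$ is independent of the choices: if also $h_j'(e_1) = e_j$, then $(h_j')^{-1}h_j$ fixes $e_1$, so lies in $H_1$, whence $(h_j')^{-1}h_j(m) = m$ and $h_j(m) = h_j'(m)$. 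Finally, for $g\in H$ inducing the permutation $\pi$ via $g(e_j) = e_{\pi(j)}$, the composite $gh_j$ sends $e_1$ to $e_{\pi(j)}$, so by the independence just established $g(h_j(m)) = h_{\pi(j)}(m)$; summing over $j$ and reindexing gives $g(x) = x$. Thus $x\in D^H = F$ and $fx = m$, so $m\in fF$.

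The main obstacle is verifying that $x = \sum_j h_j(m)$ is well defined and genuinely $H$-invariant; both points hinge on interlocking the hypothesis that $m$ is $H_1$-invariant with the transitivity of $H$ on the idempotents, and once these are in place the rest is routine.
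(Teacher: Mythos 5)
Your proof is correct and is essentially the paper's argument: both establish the easy inclusion $fF\subseteq M^{H_1}$ the same way, and both obtain the reverse inclusion by summing the $H$-translates of $m$ (one per factor) into an $H$-invariant element of $D^H=F$ whose first component is $m$, with the key well-definedness step resting on the fact that any $h\in H$ fixing $e_1$ lies in $H_1$ and hence fixes $m$. The only cosmetic difference is that you index the sum by the idempotents and therefore prove transitivity of $H$ on them, whereas the paper indexes by the $H$-orbit of the element itself and so never needs transitivity.
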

\begin{proof}
Since $fF$ is $H_1$-invariant, we have
$fF\subseteq M^{H_1}$.
We will show  the reverse inclusion. Let $l\in (fD)^{H_1} = M^{H_1}$. We need to show that there is an element $a\in F$ such that
$l=fa$.
Let the $H$-orbit  of $l$ be $\{l_1,\ldots,l_k\}$, where $l=l_1$. For each $i$, $1\Le i \Le k$, there exists $a_i \in D$ such that
$l_i = a_if_i$, where $f_i$ is the
idempotent corresponding to the $i$th factor in $D$ (so we have $f=f_1$), since if $l\ne 0$, then $H_1$ is
the stabilizer of $l$. Hence, for $d=\sum_{i=1}^k l_i$ we have $$fd=\sum_{i=1}^k f_1l_i=l_1=l$$
and $H$ permutes the $l_i$'s. Thus, $d\in D^H = F$ as desired.
 \end{proof}

\begin{proposition}\label{prop:equivconditions} Let $K$ be a Noetherian $\Sigma$-pseudofield, $R$ be a $\Sigma$-simple Noetherian algebra over $K$, and $L = \Qt(R)$. Then for the statements
\begin{enumerate}
\item\label{i1} the ring $R\otimes_KR$ is reduced,
\item\label{i2} the ring $L\otimes_KL$ is reduced,
\item\label{i3} there exists a subgroup $H \subset \Aut_\Sigma(L/K)$ such that $L^H=K$,
\end{enumerate}
we have: \eqref{i1} is equivalent to~\eqref{i2} and~\eqref{i3} implies~\eqref{i2}. Moreover, if $R$ is a Picard--Vessiot ring over $K$, then the above statements are equivalent.
\end{proposition}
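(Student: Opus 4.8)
The plan is to treat the three assertions separately, since \eqref{i1}$\Leftrightarrow$\eqref{i2} is formal, \eqref{i3}$\Rightarrow$\eqref{i2} carries the real content, and the final equivalence in the Picard--Vessiot case is a short deduction from results already proved. For \eqref{i1}$\Leftrightarrow$\eqref{i2} I would exploit that $K$, being a pseudofield, is absolutely flat, so \emph{every} $K$-module---in particular $R$ and $L$---is flat over $K$. As $R$ is $\Sigma$-simple it is reduced, so $L=\Qt(R)$ is the localization of $R$ at its non-zero divisors and $R\to L$ is injective; flatness of $R$ and $L$ over $K$ then makes $R\otimes_K R\to L\otimes_K L$ injective as well. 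On the other hand $L\otimes_K L$ is obtained from $R\otimes_K R$ by inverting the images of the non-zero divisors of $R$ in both tensor factors, i.e.\ it is a localization of $R\otimes_K R$. Since a localization of a reduced ring is reduced we get \eqref{i1}$\Rightarrow$\eqref{i2}, and since a subring of a reduced ring is reduced we get \eqref{i2}$\Rightarrow$\eqref{i1}.

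The main work is \eqref{i3}$\Rightarrow$\eqref{i2}. Since $R$ is $\Sigma$-simple and Noetherian it is reduced with finitely many minimal primes, so $L$ is a finite product of fields, and $K$, as a Noetherian pseudofield, is likewise a finite product of fields. I would first decompose along the indecomposable idempotents $g_1,\dots,g_N$ of $K$, each $F_\nu:=g_\nu K$ being a field. The elements $g_\nu\otimes g_\nu$ form orthogonal idempotents of $L\otimes_K L$ summing to $1$, so $L\otimes_K L\cong\prod_\nu\, g_\nu L\otimes_{F_\nu} g_\nu L$, and it suffices that each factor be reduced. Because $H$ fixes $K$ pointwise it fixes each $g_\nu$, and one checks $(g_\nu L)^H=g_\nu K=F_\nu$. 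As $F_\nu$ is a field, the sum of the $H$-orbit of any indecomposable idempotent of $g_\nu L$ is an $H$-invariant, hence $F_\nu$-rational, idempotent, forcing it to equal the identity of $g_\nu L$; thus $H$ permutes the indecomposable idempotents of $g_\nu L$ transitively and all field factors of $g_\nu L$ are $F_\nu$-isomorphic to one field $E$. Writing $H_1$ for the stabilizer of a fixed such idempotent, Lemma~\ref{lem:DFH} gives $E^{H_1}\cong F_\nu$. Consequently $g_\nu L\otimes_{F_\nu} g_\nu L$ is a finite product of copies of $E\otimes_{F_\nu} E$, and reducedness of the latter is equivalent to separability of $E/F_\nu$.

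The crux, and the step I expect to be the main obstacle, is therefore the implication $E^{H_1}=F_\nu\Rightarrow E/F_\nu$ separable; the delicacy is that $E/F_\nu$ need not be algebraic, so one cannot simply invoke Artin's theorem. In characteristic $0$ there is nothing to prove, so assume $\Char F_\nu=p>0$ and suppose, for contradiction, that $E$ and $F_\nu^{1/p}$ are not linearly disjoint over $F_\nu$. Pick $c_1,\dots,c_m\in F_\nu^{1/p}$ that are $F_\nu$-linearly independent but satisfy $\sum_{i=1}^m\lambda_i c_i=0$ with $\lambda_i\in E$ not all zero and $m$ minimal, normalized so that $\lambda_1=1$. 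Each $h\in H_1$ extends uniquely to the automorphism $e^{1/p}\mapsto h(e)^{1/p}$ of $E^{1/p}$, which fixes $F_\nu^{1/p}$ pointwise; applying it and subtracting the original relation yields $\sum_{i\ge 2}\bigl(\lambda_i-h(\lambda_i)\bigr)c_i=0$, a shorter relation. Minimality forces $h(\lambda_i)=\lambda_i$ for all $i$ and all $h\in H_1$, so $\lambda_i\in E^{H_1}=F_\nu$, contradicting the $F_\nu$-linear independence of the $c_i$. Hence $E/F_\nu$ is separable, $E\otimes_{F_\nu}E$ is reduced, and \eqref{i2} follows. This ``shortest-relation'' device is the same mechanism already used in Propositions~\ref{prop:1} and~\ref{prop:idealcorrespondance}.

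Finally, in the Picard--Vessiot case I would close the loop by proving \eqref{i1}$\Rightarrow$\eqref{i3}. Assuming $R\otimes_K R$ reduced, Proposition~\ref{prop:isomCG} realizes the Galois group $G$ with reduced coordinate ring and gives $R\otimes_K R\cong R\otimes_C C\{G\}$, whereupon Proposition~\ref{prop:LGK} yields $L^G=K$; taking $H=G$ establishes \eqref{i3}. Combined with \eqref{i3}$\Rightarrow$\eqref{i2}$\Leftrightarrow$\eqref{i1}, this shows that \eqref{i1}, \eqref{i2}, and \eqref{i3} are all equivalent when $R$ is a Picard--Vessiot ring.
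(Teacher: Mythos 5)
Your proof is correct, and its skeleton is the same as the paper's: the equivalence \eqref{i1}$\Leftrightarrow$\eqref{i2} via the fact that $R\otimes_KR$ is a subring of $L\otimes_KL$ and the latter is a localization of the former (you add the flatness-of-$K$-modules justification, which the paper leaves implicit); the decomposition of $L\otimes_KL$ along the indecomposable idempotents of $K$ and then of each factor $e_iL$; the use of Lemma~\ref{lem:DFH} to identify the fixed field of the stabilizer $H_1$; and the Picard--Vessiot equivalence via Propositions~\ref{prop:isomCG} and~\ref{prop:LGK}. Where you genuinely diverge is at the crux. The paper disposes of the remaining field-theoretic statement --- that with $(f_1e_iL)^{H_1}=f_1e_iK$ the rings $e_if_sL\otimes_{e_iK}e_if_tL$ are reduced --- by a citation to Bourbaki asserting that the Jacobson radical of such a tensor product vanishes. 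You instead prove it from scratch: each $h\in H_1$ extends uniquely to $E^{1/p}$ and fixes $F_\nu^{1/p}$ pointwise, a shortest-relation argument then shows $E$ is linearly disjoint from $F_\nu^{1/p}$ over $F_\nu=E^{H_1}$, and MacLane's criterion together with the standard fact that a separable extension tensored with a field (or any reduced algebra) stays reduced finishes the argument. Your route is more self-contained and makes the separability mechanism explicit, at the cost of invoking MacLane's criterion and the separable-tensor-reduced fact (essentially the Bourbaki result the paper itself cites in its remark after Theorem~\ref{thm:324}). Two minor remarks: your explicit transitivity argument for $H$ on the idempotents of $g_\nu L$ is a welcome addition, since Lemma~\ref{lem:DFH} requires all factors of $D$ to be the same field and the paper glosses over this; on the other hand, your parenthetical claim that reducedness of $E\otimes_{F_\nu}E$ is \emph{equivalent} to separability of $E/F_\nu$ overstates what you need (and the converse direction is questionable for non-algebraic extensions) --- only the implication from separability to reducedness enters your proof, and that one is correct.
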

\begin{proof} The equivalence of~\eqref{i1} and~\eqref{i2} follows from the fact that $R\otimes_KR \subset L\otimes_KL$ and that the latter ring is a localization of the former one.
We will show that~\eqref{i3} implies~\eqref{i2}. Let $\{e_1,\ldots,e_n\}$ be the indecomposable idempotents of $K$. Then,
$$
L\otimes_KL = \prod_{i=1}^ne_iL\otimes_{e_iK}e_iL.$$
It is enough to show that the ring $e_iL\otimes_{e_iK}e_iL$ is reduced. Note that $e_iK$ is a field. Since $e_i \in K$, they are all invariant under $H$ and, moreover, $
(e_iL)^H=e_iK$.
Let now $\{f_1,\ldots, f_m\}$ be the indecomposable idempotents of the ring $e_iL$ and let $H_1$ be the stabilizer of $f_1$. Lemma~\ref{lem:DFH} with $D=e_iL$ and  $F=e_iK$ implies that
$$
(e_if_1L)^{H_1}=f_1e_iK.
$$
Since
$$
e_iL\otimes_{e_iK}e_iL = \prod_{s,t}e_if_sL\otimes_{e_iK}e_if_tL,
$$
it remains to show that the ring
$$D := e_if_sL\otimes_{e_iK}e_if_tL$$ is reduced. By
\cite[Corollary~1, \S7, no.~2]{Bourbaki}, with $A = e_if_sL$,
$B = e_if_tL$, $N = B$, and $K=e_iK$, the Jacobson radical of the
ring $D$ is zero. In particular, the ring $D$ is reduced.

The last statement follows from Proposition~\ref{prop:LGK}.
 \end{proof}
\begin{definition} A Picard--Vessiot extension $L/K$ is called separable if one of the three equivalent conditions in Proposition~\ref{prop:equivconditions} is satisfied.
\end{definition}
\begin{theorem}\label{thm:324} Let $R$ be a Picard--Vessiot ring of equation~\eqref{eq:diffeq} and $L=\Qt(R)$ be separable over $K$. Let $\F$ denote the set of all
intermediate $\Sigma$-pseudofields $F$ such that $L$ is separable over $K$
and $\Grp$ denote the set of all $\Sigma_1$-closed subgroups $H$ in the Galois group $G$ of $L$ over $K$. Then the correspondence
$$
\F \longleftrightarrow \Grp,\quad F\mapsto \Aut_\Sigma(L/F),\quad H\mapsto L^H
$$
is bijective and the above maps are inverses of each other. Moreover, $H$ is normal in $G$ if and only if the $\Sigma$-pseudofield $F := L^H$ is $G$-invariant.
 \end{theorem}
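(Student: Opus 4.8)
The plan is to reduce the bijectivity to the two facts already established, Proposition~\ref{prop:LGK} and Proposition~\ref{prop:HG}, by observing that $L$ is again a separable Picard-Vessiot pseudofield over each intermediate $F\in\F$. First I would record this relativization. For $K\subseteq F\subseteq L$ with $F$ a $\Sigma$-pseudofield, $F$ is Noetherian by Proposition~\ref{prop:Noetherian}, and since $C=K^\sigma\subseteq F^\sigma\subseteq L^\sigma=C$ we get $F^\sigma=C$, which is $\Sigma_1$-closed. As the fundamental matrix of~\eqref{eq:diffeq} has entries in $L$, as $L$ is generated over $K$ (hence over $F$) by these entries, and as $L^\sigma=F^\sigma$, the extension $L/F$ is a Picard-Vessiot pseudofield with Picard-Vessiot ring $R_F$ satisfying $L=\Qt(R_F)$ by Proposition~\ref{prop:LQtR}, and its Galois group $G_F:=\Aut_\Sigma(L/F)$ is a subgroup of $G$.

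Next I would verify that both maps are well defined. For $F\in\F$ the extension $L/F$ is separable, so by Proposition~\ref{prop:isomCG} the group $G_F$ is a $\Sigma_1$-closed subgroup of $\GL_n(C)$; since $G_F\subseteq G$ and both are closed, $G_F\in\Grp$. Conversely, for a $\Sigma_1$-closed subgroup $H\subseteq G$ the fixed ring $L^H$ is $\Sigma$-stable (as $H$ consists of $\Sigma$-automorphisms) and absolutely flat by Proposition~\ref{prop:2}, hence a $\Sigma$-pseudofield by Proposition~\ref{prop:1} and Noetherian by Proposition~\ref{prop:Noetherian}. Moreover $H\subseteq\Aut_\Sigma(L/L^H)$ fixes $L^H$, so condition~\eqref{i3} of Proposition~\ref{prop:equivconditions} holds for $L/L^H$; by that proposition $L\otimes_{L^H}L$ is reduced, i.e.\ $L$ is separable over $L^H$, so $L^H\in\F$.

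The two round trips are then immediate applications of the relativized key propositions. For $F\in\F$, Proposition~\ref{prop:LGK} applied over the base $F$ gives $L^{G_F}=F$, that is $L^{\Aut_\Sigma(L/F)}=F$. For $H\in\Grp$, setting $F=L^H$ we have that $H$ is a $\Sigma_1$-closed subgroup of $G_F$ with $L^H=F$, whence Proposition~\ref{prop:HG} applied over $F$ yields $H=G_F=\Aut_\Sigma(L/L^H)$. This shows the two maps are mutually inverse bijections between $\F$ and $\Grp$.

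Finally, for the normality assertion I would first record the identity $L^{gHg^{-1}}=g(L^H)$ for every $g\in G$ and every subgroup $H\subseteq G$, which follows by unwinding the definition of the fixed ring: $x\in L^{gHg^{-1}}$ iff $g^{-1}(x)\in L^H$. If $H$ is normal, then $gHg^{-1}=H$ gives $g(L^H)=L^H$ for all $g\in G$, so $F=L^H$ is $G$-invariant. Conversely, if $F=L^H$ is $G$-invariant, then $L^{gHg^{-1}}=g(L^H)=L^H$ for all $g$, and injectivity of the correspondence $H\mapsto L^H$ forces $gHg^{-1}=H$, so $H$ is normal. I expect the main obstacle to lie not in these final manipulations but in the relativization step of the first paragraph: one must confirm that every structural hypothesis of the Picard-Vessiot setup—Noetherianity, the constants equalling $C$, generation by the fundamental matrix, and separability—transfers from $K$ to an arbitrary intermediate $F$. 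Once this is in place, the genuine content of the correspondence resides entirely in Propositions~\ref{prop:LGK} and~\ref{prop:HG}.
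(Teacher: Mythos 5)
Your proposal is correct and follows essentially the same route as the paper's proof: well-definedness of the two maps via Propositions~\ref{prop:isomCG}, \ref{prop:1}, \ref{prop:2}, \ref{prop:Noetherian}, and~\ref{prop:equivconditions}, the two round trips by applying Propositions~\ref{prop:LGK} and~\ref{prop:HG} with the intermediate pseudofield $F$ as the base, and normality via the identity $g\left(L^H\right)=L^{gHg^{-1}}$. The only difference is expository: you spell out the relativization step (that $L$ is again a separable Picard-Vessiot pseudofield over each intermediate $F$, with $F^\sigma=C$ and $L=\Qt(R_F)$), which the paper asserts without detail.
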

\begin{proof}
The map $\F \longrightarrow \Grp$ is well-defined by Proposition~\ref{prop:isomCG}. Propositions~\ref{prop:1} and~\ref{prop:2} imply that $L^H\subset L$ is a $\Sigma$-pseudofield. By Proposition~\ref{prop:Noetherian}, it is Noetherian and, by Proposition~\ref{prop:equivconditions}, it is separable.

Let $F \in \F$. Then the extension $L$ over $F$ is separable and is a Picard--Vessiot pseudofield for equation~\eqref{eq:diffeq} considered over $F$. Moreover,
$
F = F^{\Aut_\Sigma(L/F)}
$
by Proposition~\ref{prop:LGK}.
Conversely, let $H$ be a $\Sigma_1$-closed subgroup of $G$. Set $F = L^H$. Then $L$ is a Picard--Vessiot pseudofield for equation~\eqref{eq:diffeq} over
$F$. By Proposition~\ref{prop:HG}, we have $H = \Aut_\Sigma(L/F)$.
The equality
$$
g(F) = \left\{r \in L\:|\: ghg^{-1}r=r\ \text{for all}\ h\in H\right\}
$$
implies the statement about  normality.
 \end{proof}

\begin{remark}
The base pseudofield $K$ is a product of the fields, say
$L\times\ldots\times L$. If the field $L$ is perfect, then for every
pseudofields $F$ and $E$ containing $K$ the ring $F\otimes_K E$ is
reduced.
Indeed, let
$e_0,\ldots,e_{t-1}$ be all indecomposable idempotents of $K$, then
$$
F\otimes_K E=\prod_{i=0}^{t-1} e_iF\otimes_{L} e_i E.
$$
Since $L$ is perfect and $L$-algebras $e_iF$ and $e_i E$ are
reduced, then $e_iF\otimes_{L} e_i E$ is reduced as well
(see~\cite[A.V.~119, No.~5, Th\'eor\`em~3(d)]{Bourbaki2}). Therefore,
if $L$ is perfect, then any Picard--Vessiot extension is separable.
If the field $L$ is finite, algebraically closed or of
characteristic zero, then $L$ is perfect. In this case, the set $\F$
contains all intermediate $\Sigma$-pseudofields.
\end{remark}

\subsection{Extension to non-faithful action}\label{sec:nonfaithful}
In the introduction, we alluded to the fact that some of our results could instead be
obtained via faithfully flat descent  \cite{MilneEtale}. However, this requires the additional assumption\footnote{We also assume this in Section~\ref{sec:PVTheory},
but it is only needed to guarantee the uniqueness of a parameterized PV-extension, which we do not use in the applications.
It also implies the existence, but is only a sufficient condition, and there are situations when one does not have to make this assumption.
The Galois correspondence in its Hopf-algebraic version does not need this as we show below.} 
that $\Sigma_1$ acts faithfully in the setup for faithfully flat descent. The theory we have
developed in this paper works more generally, as we now illustrate.
Let $L$ be a
field, $\Sigma_1=\mathbb Z/4\mathbb Z$, and $\rho$ be a generator.
Suppose that $\Sigma_1$ acts on $L$ faithfully and $K=L^{\rho^2}$. Then
$$
\mathrm{Aut}^\rho(L/K)\cong\mathbb Z/2\mathbb Z\quad \text{but}\quad
\mathrm{Aut}{\left(L^{\Sigma_1}/K^{\Sigma_1}\right)}=\{1\},
$$
where $\mathrm{Aut}^\rho$ is the set of $\rho$-automorphisms (so, we also have
to store the order of $\rho$ -- not only the field of invariants).
Roughly speaking, replacing a difference object by a non-difference
one, we cannot recover the group of difference automorphisms. This
example (where $L$ is a Picard--Vessiot extension and $K$ is a base
field) appears naturally in the Picard--Vessiot theory if, for instance, the initial
field contains only $\rho$-constants. Also note that if we replace
$L$ by $L^{\Sigma_1}$, then the $\sigma$-constants of $L^{\Sigma_1}$ are not
necessarily algebraically closed. Therefore, using descent, extra preparatory steps
are required  before we are able to apply the standard
non-parametric difference Galois theory.

Here is another example in which it is preferable to consider not necessarily faithful actions of $\Sigma_1$. Let 
$$L = \mathbb{C}(x),\quad  \sigma(x) = 2\cdot x,\ \rho(x) = -x,\quad K = \mathbb{C}{\left(x^2\right)},\quad \Sigma_1 = \{\mathrm{id},\rho\}$$ and the difference equation
be 
\begin{equation}\label{eq:1}
\sigma(y) = 2\cdot y.
\end{equation} There are no solutions of~\eqref{eq:1} in $K$
and $L$ is a Picard--Vessiot extension of the equation, on which $\Sigma_1$ acts faithfully, while the action of $\Sigma_1$ on $K$ is trivial. Of course, the field $L$ considered with the trivial action of $\rho$ is also a Picard--Vessiot extension, but restricting to this would not allow us to consider more interesting and useful cases outlined in this example.  We will show how one can generalize our results to include non-faithful actions of $\Sigma_1$.
\begin{theorem}[(Instead of Theorem~\ref{thm:324})]\label{thm:1new}
Let $R$ be a Picard--Vessiot ring over a pseudofield $K$ with the
corresponding $\Sigma_1$-Hopf-algebra $H$ (which replaces the Galois group -- see \cite[Section~2]{HopfGalois}) and
$L=\Qt(R)$ be the pseudofield of fractions. Let
$\mathcal F$ denote the set of all intermediate
$\Sigma$-pseudofields and $\mathcal G$ denote the set of all
$\Sigma_1$-Hopf-ideals in $H$. Then the correspondence
$$
\mathcal G \rightarrow \mathcal F,\:\: I\mapsto L^I := \{x \in L\:|\: 1\otimes x-x\otimes 1 \in I\cdot (L\otimes_K L)\}
$$
is bijective. Moreover, $I$ is normal in $H$ if and only if the
$\Sigma$-pseudofield $L^I$ is $H$-invariant. A
$\Sigma_1$-Hopf-ideal $I$ is radical if and only if $L$ is separable
over $L^I$.
\end{theorem}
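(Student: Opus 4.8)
The plan is to base everything on the fundamental isomorphism
$$\Phi\colon R\otimes_K R \xrightarrow{\ \cong\ } R\otimes_C H,\qquad C := K^\sigma,$$
produced exactly as in the proof of Proposition~\ref{prop:isomCG} (isomorphism~\eqref{eq:RBisomorphism}) but \emph{without} invoking reducedness, so that $H=B$ is the possibly non-reduced $\Sigma_1$-Hopf-algebra of the equation. Composing the right inclusion $r\mapsto 1\otimes r$ with $\Phi$ yields an $H$-comodule-algebra coaction $\rho\colon R\to R\otimes_C H$, which extends to $L=\Qt(R)$ because $L\otimes_K L$ is a localization of $R\otimes_K R$ (as in the proof of Proposition~\ref{prop:equivconditions}) and $\Phi$ localizes to $\Phi_L\colon L\otimes_K L\cong L\otimes_C H$. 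Under $\Phi_L$ the element $1\otimes x-x\otimes 1$ becomes $\rho(x)-x\otimes 1$ and the ideal $I\cdot(L\otimes_K L)$ becomes $L\otimes_C I$, so
$$L^I=\{x\in L : \rho(x)\equiv x\otimes 1 \ \mathrm{mod}\ L\otimes_C I\}$$
is precisely the ring of $(H/I)$-coinvariants of $L$; this reinterpretation is the organizing device of the whole argument.

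First I would check that both maps are well defined. For a $\Sigma_1$-Hopf-ideal $I$, the coinvariants $L^I$ form a $\Sigma$-subring of $L$ containing $K$; it is absolutely flat, the coinvariants of a coaction on an absolutely flat ring being absolutely flat by the idempotent bookkeeping of Proposition~\ref{prop:2}, hence a $\Sigma$-pseudofield by Proposition~\ref{prop:1} and Noetherian by Proposition~\ref{prop:Noetherian}. Conversely, for an intermediate $\Sigma$-pseudofield $F$ one has $F^\sigma=C$ (since $C\subseteq F^\sigma\subseteq L^\sigma=C$), so $L$ is a Picard-Vessiot pseudofield for~\eqref{eq:diffeq} over $F$; writing $R_F=R\cdot F$ for its Picard-Vessiot ring, the natural surjection $R\otimes_K R\to R_F\otimes_F R_F$ transports, via the two fundamental isomorphisms, to a surjection of $\Sigma_1$-Hopf-algebras $H\to H_F$, and I set $I_F:=\ker(H\to H_F)$.

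The main work is to show the two assignments are mutually inverse. The identity $L^{I_F}=F$ is the Hopf-algebraic form of Proposition~\ref{prop:LGK}, and $I_{L^I}=I$ is the form of Proposition~\ref{prop:HG}; I expect the latter to be the principal obstacle, since the earlier proof of Proposition~\ref{prop:HG} ran through the Galois group and an idempotent-transitivity argument that tacitly used reducedness. To bypass this I would argue entirely through Proposition~\ref{prop:idealcorrespondance}: the faithfully flat coaction $\rho$ exhibits $R$ as an $H$-Galois extension of $R^{\mathrm{co}H}=K$, so that coinvariant subalgebras, quotient Hopf-algebras $H/I$, and $\Sigma$-ideals of $R\otimes_K R$ are all matched up by the correspondence $\ia\leftrightarrow \ia^e=R\otimes_C\ia$; chasing $I$ and $F$ through these identifications produces both equalities. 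Here the faithful flatness of $R$ over $K$ (Proposition~\ref{prop:26}) and of $H$ over $C$ is exactly what replaces the reducedness hypothesis used before.

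Finally, the two supplementary assertions follow formally. A $\Sigma_1$-Hopf-ideal $I$ is normal precisely when $H/I$ is a normal quotient Hopf-algebra, which under the coinvariant dictionary is equivalent to $\rho(L^I)\subseteq L^I\otimes_C H$, i.e.\ to $L^I$ being $H$-invariant; this mirrors the normality computation closing the proof of Theorem~\ref{thm:324}. For the last claim, $H/I$ is the Galois Hopf-algebra of the Picard-Vessiot extension $L/L^I$, so by Proposition~\ref{prop:isomCG} together with the equivalences of Proposition~\ref{prop:equivconditions} the extension $L/L^I$ is separable if and only if $H/I$ is reduced, that is, if and only if $I$ is radical.
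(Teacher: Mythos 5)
Your overall strategy is the same as the paper's: the paper also proves Theorem~\ref{thm:1new} by extending the Hopf-algebraic (Amano--Masuoka) approach, starting from the torsor isomorphism $R\otimes_K R\cong R\otimes_C H$ and a correspondence between ideals of $H$ and ideals of $R\otimes_K R$. However, there is a genuine gap exactly at the step you yourself flag as the principal obstacle. You propose to obtain both equalities $L^{I_F}=F$ and $I_{L^I}=I$ by ``arguing entirely through Proposition~\ref{prop:idealcorrespondance}'' and chasing identifications. That proposition is only a bijection between $\Sigma_1$-ideals of $B$ and $\Sigma$-ideals of $R\otimes_A B$; it carries no coalgebra information, so it can neither single out which $\Sigma$-ideals of $L\otimes_K L$ arise from intermediate pseudofields, nor show that the ideal attached to an intermediate pseudofield $F$ (the kernel of $L\otimes_K L\to L\otimes_F L$) is induced from a $\Sigma_1$-Hopf-ideal of $H$, nor that $H\to H_F$ is surjective so that $H/I_F\cong H_F$. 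This is precisely why the paper does \emph{not} use Proposition~\ref{prop:idealcorrespondance} here: it states a replacement proposition matching $\Sigma_1$-Hopf-ideals of $H$ with $\Sigma$-\emph{coideals} of $R\otimes_K R$, and combines it with \cite[Proposition~3.10]{Amano2}, which is the Takeuchi-type correspondence (with the requisite faithful (co)flatness hypotheses) between quotient Hopf structures and intermediate artinian simple algebras. Your remark that $R$ is an ``$H$-Galois extension'' of $K$ gestures at this theory, but the bijection you need is the theorem of that theory, not a formal consequence of the ideal correspondence.

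A second, related gap is in well-definedness. You argue that $L^I$ is absolutely flat ``by the idempotent bookkeeping of Proposition~\ref{prop:2}.'' Propositions~\ref{prop:1} and~\ref{prop:2} concern invariants under a group of automorphisms, whereas Theorem~\ref{thm:1new} is designed precisely for the case where $H$ may be non-reduced (this is the point of the footnote about non-separable extensions): then $H$ need not have enough group-like elements, $L^I$ is not the fixed ring of any group of automorphisms, and Proposition~\ref{prop:2} does not apply as stated. One needs a coaction-based argument (again available in the framework of \cite{HopfGalois,Amano2}) that the coinvariants $L^I$ form a pseudofield; assuming it via Proposition~\ref{prop:2} silently reintroduces the reducedness/separability hypothesis that this theorem is meant to remove relative to Theorem~\ref{thm:324}.
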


In order to prove this result, one extends the Hopf-algebraic approach
given in~\cite{Amano2}. The main technical results one uses are \cite[Proposition 3.10]{Amano2} and

\begin{proposition}[(Instead of Proposition~\ref{prop:idealcorrespondance})]
Let $R$ be a Picard-Ves\-siot ring over $K$ and $$R\otimes_K
R\cong R\otimes_C H$$ be the torsor isomorphism for $R$, where $H$ is a
$\Sigma_1$-Hopf-algebra over $\Sigma_1$-pseudofield $C=R^\sigma$.
Then this isomorphism induces a $1-1$ correspondence between
$\Sigma_1$-Hopf-ideals of $H$ and $\Sigma$-coideals of $R\otimes_K
R$. That is, if $I\subseteq R\otimes_K R$ is a $\Sigma$-coideal, then
$I\cap H$ is a $\Sigma_1$-Hopf-ideal; if $\frak a$ is a
$\Sigma_1$-Hopf-ideal, then $R\otimes_C \frak a$ is a
$\Sigma$-coideal.
\end{proposition}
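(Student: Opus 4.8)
The plan is to reduce the statement to the ideal correspondence already proved in Proposition~\ref{prop:idealcorrespondance} and then to track the extra coalgebra structure by faithfully flat base change. First I would observe that the hypotheses of Proposition~\ref{prop:idealcorrespondance} apply with $A = C = R^\sigma$ and $B = H$: here $C$ is a $\Sigma_1$-closed pseudofield because $R$ is a Picard-Vessiot ring over $K$, $R$ is $\Sigma$-simple, and $H$ is a $\Sigma_1$-$C$-algebra on which $\sigma$ acts as the identity. That proposition supplies a bijection between $\Sigma_1$-ideals $\mathfrak a$ of $H$ and $\Sigma$-ideals $I$ of $R\otimes_C H\cong R\otimes_K R$, given by $\mathfrak a\mapsto R\otimes_C\mathfrak a$ and $I\mapsto I\cap H$, together with the identities $I = R\otimes_C(I\cap H)$ and $(R\otimes_C\mathfrak a)\cap H=\mathfrak a$. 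The entire remaining task is to check that this bijection restricts to the coideal/Hopf-ideal level.

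Next I would transport the Sweedler $R$-coring structure of $R\otimes_K R$ (comultiplication $a\otimes b\mapsto a\otimes 1\otimes b$ and counit given by multiplication) along the torsor isomorphism, identifying it on $R\otimes_C H$ with $\mathrm{id}_R\otimes\Delta_H$ and $\mathrm{id}_R\otimes\varepsilon_H$ under the canonical identification $(R\otimes_C H)\otimes_R(R\otimes_C H)\cong R\otimes_C H\otimes_C H$. The key input is that $R$ is free, hence faithfully flat, over $C$ by Proposition~\ref{prop:26}. Consequently, for a $\Sigma_1$-stable $C$-submodule $\mathfrak a\subseteq H$ one has $\Delta(R\otimes_C\mathfrak a)=R\otimes_C\Delta_H(\mathfrak a)$ and $(R\otimes_C\mathfrak a)\otimes_R(R\otimes_C H)+(R\otimes_C H)\otimes_R(R\otimes_C\mathfrak a)=R\otimes_C(\mathfrak a\otimes_C H+H\otimes_C\mathfrak a)$, and flatness lets intersections and inclusions of submodules be computed after base change. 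Faithful flatness then shows that the coideal conditions $\Delta_H(\mathfrak a)\subseteq\mathfrak a\otimes_C H+H\otimes_C\mathfrak a$ and $\varepsilon_H(\mathfrak a)=0$ hold if and only if their base changes, the coideal conditions for $R\otimes_C\mathfrak a$ in the coring, hold. Since the $\Sigma$-equivariance of extension and contraction is already built into Proposition~\ref{prop:idealcorrespondance}, this settles both directions at once: $R\otimes_C\mathfrak a$ is a $\Sigma$-coideal exactly when $\mathfrak a$ is a $\Sigma_1$-coideal, and $I\cap H$ is a $\Sigma_1$-coideal exactly when $I$ is a $\Sigma$-coideal, using $I=R\otimes_C(I\cap H)$.

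For antipode stability I would use that $H$ is a commutative $\Sigma_1$-Hopf-algebra, so a $\Sigma_1$-biideal is automatically a $\Sigma_1$-Hopf-ideal (a closed submonoid of the associated group object is a subgroup); alternatively one descends stability under the flip $a\otimes b\mapsto b\otimes a$ of $R\otimes_K R$, which corresponds to the antipode of $H$, by the same faithfully flat argument. I expect the main obstacle to be the bookkeeping of the second step: verifying that the torsor isomorphism genuinely intertwines the coring comultiplication of $R\otimes_K R$ with $\mathrm{id}_R\otimes\Delta_H$, and that the identification of the $R$-relative tensor square with $R\otimes_C H\otimes_C H$ carries the two summands of the coring coideal condition onto $R\otimes_C(\mathfrak a\otimes_C H+H\otimes_C\mathfrak a)$. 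Once this compatibility is pinned down, faithfully flat descent of inclusions does the rest uniformly.
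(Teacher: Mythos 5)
The paper itself offers no proof of this proposition: it is stated, alongside \cite[Proposition~3.10]{Amano2}, as a technical ingredient whose proof ``extends the Hopf-algebraic approach given in~\cite{Amano2}.'' Measured against that intended route, your attempt has a genuine gap, and it sits in your very first step. You invoke Proposition~\ref{prop:idealcorrespondance} with $A=C$ and $B=H$, justifying its hypotheses by the claim that ``$C$ is a $\Sigma_1$-closed pseudofield because $R$ is a Picard-Vessiot ring over $K$.'' That inference is not available here. Closedness of $R^\sigma$ is obtained in the paper from Theorem~\ref{thm:nonewconstants} (via Proposition~\ref{prop:nonewconst}), and both require the standing hypothesis of Section~\ref{sec:PVTheory} that $K^\sigma$ is $\Sigma_1$-closed. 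The proposition you are proving lives in Section~\ref{sec:nonfaithful}, whose entire purpose is to drop that hypothesis: there $C=R^\sigma$ is assumed only to be a $\Sigma_1$-pseudofield, and in the section's motivating example ($K=\C\left(x^2\right)$ with trivial $\rho$-action, $L=\C(x)$, so $C=\C$) the $\Sigma_1$-action on $C$ is not even faithful, whereas every $\Sigma_1$-closed pseudofield has the form $F_{\Sigma_1}(k)$ with $k$ algebraically closed, on which $\Sigma_1$ acts faithfully and which has zero divisors once $|\Sigma_1|>1$. So your argument proves the statement only in the situation where Proposition~\ref{prop:idealcorrespondance} already applies, which defeats the point of a proposition labelled ``Instead of Proposition~\ref{prop:idealcorrespondance}.''

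The gap is structural, not cosmetic: the unrestricted bijection between all $\Sigma_1$-ideals of $H$ and all $\Sigma$-ideals of $R\otimes_KR$, which you then propose to cut down to Hopf-ideals and coideals, is exactly what is not known in this setting --- the paper's proof of it hinges on a $\Sigma_1$-invariant basis of $B$ over $A$, i.e., on Proposition~\ref{prop:26}, which is again a closedness statement (over a non-closed $C$, $\Sigma_1$-modules need not admit invariant bases; e.g., $\C$ with trivial $\Sigma_1$-action acting on itself by a sign). This is precisely why the statement is formulated only at the level of $\Sigma_1$-Hopf-ideals and $\Sigma$-coideals: the coalgebra structure must enter the proof of the bijection itself, via descent for the coring $R\otimes_KR$ over the $\Sigma$-simple ring $R$ in the style of \cite{Amano2}, rather than serving merely to restrict a pre-existing ideal correspondence. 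What does survive from your write-up, and would be reused in such a proof, are the auxiliary steps: the verification that the torsor isomorphism carries the Sweedler coring structure to $\mathrm{id}_R\otimes\Delta_H$, the transfer of the coideal conditions along $C\to R$, and the observation that a biideal of the commutative, $\Sigma_1$-finitely generated Hopf algebra $H$ is automatically antipode-stable. Note also that for these you should not appeal to Proposition~\ref{prop:26} either: $R$ is faithfully flat over $C$ simply because $C$ is absolutely flat (so every $C$-module is flat) and $\mathfrak{m}R\ne R$ for each maximal ideal $\mathfrak{m}\subset C$ (if $1=\sum m_ir_i$ with $m_i\in\mathfrak{m}$, the finitely generated ideal $(m_1,\ldots,m_k)\subset C$ is generated by an idempotent $e\in\mathfrak{m}$, and $1\in eR$ forces $e=1$).
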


From the Hopf-algebraic point of view, the Galois correspondence
can be derived from Theorem~\ref{thm:1new} above using the following result:

\begin{theorem}
Let $H$ be a reduced $\Sigma_1$-Hopf-algebra over a difference closed
pseudofield $K$. Then $H$ induces a functor $F$ from the category of
$\Sigma_1$-$K$-algebras to the category of groups by the rule
$$F(R)=\hom_{\Sigma_1-K}(H,R).$$ Then $H$ can be recovered from 
$F(K)$.
\end{theorem}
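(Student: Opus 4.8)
The plan is to realize $F$ as the functor of points of the $\Sigma_1$-algebraic group $G := F(K)$ and to recover $H$ as the coordinate ring $K\{G\}$, the point being that difference-closedness of $K$ forces the $K$-points alone to determine the reduced representing algebra. Since $H$ is the Hopf algebra of a linear $\Sigma_1$-algebraic group, it is $\Sigma_1$-finitely generated, so I would first fix a presentation $H = K\{y_1,\dots,y_n\}_{\Sigma_1}/I$; because $H$ is reduced, $I$ is a radical $\Sigma_1$-ideal. The Hopf structure $(\Delta,\epsilon,S)$ makes each $F(R) = \hom_{\Sigma_1\text{-}K}(H,R)$ a group under the convolution product $\phi * \psi := m_R\circ(\phi\otimes\psi)\circ\Delta$, with unit $\eta_R\circ\epsilon$ and inverse $\phi\mapsto\phi\circ S$; naturality in $R$ is immediate, so $F$ is genuinely group-valued.

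Next I would identify $F(K)$ with a difference-algebraic subset of affine space. Sending $\phi\in F(K)$ to the tuple $(\phi(\bar y_1),\dots,\phi(\bar y_n))\in K^n$ gives a bijection onto $\V(I)$: a tuple lies in $\V(I)$ exactly when the assignment $y_i\mapsto a_i$ annihilates $I$, which is the condition for it to descend to a $\Sigma_1$-$K$-homomorphism $H\to K$. Equivalently, such homomorphisms correspond to maximal $\Sigma_1$-ideals $\m$ of $H$ with $H/\m\cong K$, and since $K$ is $\Sigma_1$-closed every such ideal arises from a $K$-point by \cite[Proposition~14]{Dima}. Thus as a set $F(K) = \V(I)\subseteq K^n$, and it is nonempty because the counit $\epsilon$ furnishes the identity point.

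The heart of the argument is the difference Nullstellensatz. Since $K$ is difference closed, Definition~\ref{def:diffclosedring} gives $\I(\V(I)) = \sqrt I = I$, the last equality because $I$ is radical. Hence the ring of difference-polynomial functions on $F(K)=\V(I)$ is exactly
$$
K\{y_1,\dots,y_n\}_{\Sigma_1}\big/\I(\V(I)) = K\{y_1,\dots,y_n\}_{\Sigma_1}\big/I = H,
$$
so $H$ is recovered as the coordinate ring $K\{F(K)\}$ of the point group $F(K)$. It then remains to check that the group law on $F(K)$ returns the Hopf structure and not merely the underlying algebra: the convolution multiplication, unit, and inversion on $F(K)$ are regular (difference-polynomial) maps whose comorphisms on $K\{F(K)\}=H$ are, respectively, $\Delta$, $\epsilon$, and $S$. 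This matches the given Hopf structure, so the full reduced $\Sigma_1$-Hopf-algebra $H$ is reconstructed from $G=F(K)$.

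The main obstacle, and the only place where difference-closedness is indispensable, is the Nullstellensatz step: without the equality $\I(\V(I))=\sqrt I$ the $K$-points could fail to separate elements of $H$, and $F(K)$ would be too small to recover $H$. The remaining work — fixing the finite presentation, verifying that the convolution structure is cut out by difference polynomials, and confirming that the induced comorphisms are $\Delta,\epsilon,S$ — is routine bookkeeping within the equivalence between affine $\Sigma_1$-pseudovarieties and $\Sigma_1$-finitely generated reduced algebras (cf. \cite[Proposition~42]{Dima}).
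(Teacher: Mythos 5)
The paper states this theorem without proof --- it is the closing assertion of the sketch in Section~\ref{sec:nonfaithful} on non-faithful actions --- so there is no in-paper argument to compare yours against; it must be judged on its own merits. On those merits your outline is essentially correct and is surely the intended argument: present $H=K\{y_1,\dots,y_n\}_{\Sigma_1}/I$ with $I$ radical, identify $F(K)=\hom_{\Sigma_1-K}(H,K)$ with the pseudovariety $\V(I)\subseteq K^n$ (nonempty thanks to the counit), and then use difference-closedness of $K$ (Definition~\ref{def:diffclosedring}) together with reducedness of $H$ to get $\I(\V(I))=\sqrt I=I$, so that evaluation at points identifies $H$ with the coordinate ring $K\{y_1,\dots,y_n\}_{\Sigma_1}/\I(\V(I))$ of $F(K)$. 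That Nullstellensatz step is exactly where the hypothesis on $K$ does its work, and you have isolated it correctly; the convolution group structure on $F(R)$ and its naturality are standard and fine.

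Two omissions deserve flagging, though neither is fatal. First, you assume $H$ is $\Sigma_1$-finitely generated. The bare statement does not say this, but in the context of Section~\ref{sec:nonfaithful}, where $H$ arises from the torsor isomorphism $R\otimes_K R\cong R\otimes_C H$ for a Picard-Vessiot ring $R$, it is automatic, so this is a reasonable reading. Second, and more substantively: to conclude that the comorphism of the group law on $F(K)$ \emph{is} $\Delta$ (and not merely its composite with $H\otimes_K H\to (H\otimes_K H)_{\mathrm{red}}$), you need the coordinate ring of $F(K)\times F(K)$ to be $H\otimes_K H$ itself, i.e., you need $H\otimes_K H$ to be reduced. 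This is true but not formal: by \cite[Theorem~17(4)]{Dima}, a difference closed pseudofield has the form $F_{\Sigma_1}(k)$ with $k$ algebraically closed (hence perfect), so $H\otimes_K H\cong\prod_{\mu}\left(e_\mu H\otimes_k e_\mu H\right)$ is a finite product of tensor products of reduced algebras over a perfect field and is therefore reduced --- the same Bourbaki argument the paper itself uses in the Remark following Theorem~\ref{thm:324}. Your appeal to ``routine bookkeeping'' via the pseudovariety/algebra equivalence of \cite[Proposition~42]{Dima} glosses over this: the categorical equivalence alone would only hand you $(H\otimes_K H)_{\mathrm{red}}$ as the coordinate ring of the product, so the reducedness of the tensor product is a genuine (if small) lemma your proof needs to state.
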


\subsection{Torsors}\label{sec:torsor}
Let $C$ be a $\Sigma_1$-closed pseudofield and $K\supset C$ be a Noetherian $\Sigma$-pseudofield. Let $G$ be a $\Sigma_1$-group over $C$ be $C\{G\}$ be its $\Sigma_1$-Hopf algebra with comultiplication $\Delta$, antipode $S$, and counit $\varepsilon$.
\begin{definition} A $\Sigma_1$-finitely generated $K$-algebra $R$ supplied with a $\Sigma$-$K$-algebra homomorphism $$\nu^*: R\to R\otimes_CC\{G\}$$
is called a $G$-torsor over $K$ if the following statements are true:
\begin{enumerate}
\item $R$ is a $C\{G\}$-comodule with respect to $\nu^*$,
\item\label{iso2} the vertical arrow in the following diagram is an isomorphism:
$$
\xymatrix@R=8pt@C=8pt{
    &R\mathop{\otimes}_K R\ar[dd]&\\
    R\ar[ur]^-{\id_R\otimes 1}\ar[dr]_-{\nu^*}&&R\ar[ul]_-{1\otimes \id_R}\ar[dl]^-{1\otimes \id_R}\\
    &C\{G\}\mathop{\otimes}_C R&\\
}
$$
\end{enumerate}
\end{definition}
In the above notation, the rings $R$ and $C\{G\}$ are finitely generated algebras over Artinian rings. Then the Krull dimension is defined for them, which we will denote by $\dim R$ and $\dim C\{G\}$, respectively. The isomorphism in~\eqref{iso2} implies that $
\dim R = \dim C\{G\}$.
Moreover, let $e$ be an indecomposable idempotent in $C$ and $F := eC$ be
the corresponding residue field. Then $F\otimes_CC\{G\}$ is a finitely generate $F$-algebra of dimension equal to $\dim C\{G\}$. Hence, for any minimal prime ideal $\p$ of the ring $F\otimes_CC\{G\}$,
$$
\trdeg_F \kkk(\p) = \dim C\{G\} = \dim R,
$$
where $\kkk(\p)$ is the residue field of $\p$.

\begin{proposition}\label{prop:Galoistorsor} Let $K$ be a Noetherian $\Sigma$-pseudofield with $K^\sigma$ being a $\Sigma_1$-closed pseudofield. Let $R$ be a Picard--Vessiot ring for equation~\eqref{eq:diffeq} with $L = \Qt(R)$. Let $G$ be the Galois group of $L$ over $K$. If $R$ is separable over $K$, then $R$ is a $G$-torsor over $K$.
\end{proposition}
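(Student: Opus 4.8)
The plan is to assemble the torsor structure directly from the isomorphism already produced in Proposition~\ref{prop:isomCG}. Since $R$ is separable over $K$, condition~\eqref{i1} of Proposition~\ref{prop:equivconditions} tells us that $R\otimes_K R$ is reduced, so the ideal $J$ appearing in Proposition~\ref{prop:isomCG} is radical and $C\{G\}$ is honestly the coordinate ring of the Galois group $G$; moreover that proposition supplies a $\Sigma$-isomorphism $\tau\colon R\otimes_K R\xrightarrow{\sim} R\otimes_C C\{G\}$. Up to swapping the two copies of $R$ in the source and flipping the factors of the target, $\tau$ is exactly the vertical arrow of the torsor diagram, so condition~\eqref{iso2} will be immediate once the coaction has been identified. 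Note also that $R$, being a Picard-Vessiot ring, is $\Sigma_1$-finitely generated over $K$.

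First I would define the coaction as the composite $\nu^*:=\tau\circ\phi_2\colon R\to R\otimes_C C\{G\}$, where $\phi_2\colon r\mapsto 1\otimes r$. Being a composite of $\Sigma$-$K$-algebra homomorphisms, $\nu^*$ is again one (for the $K$-algebra structure on $R\otimes_C C\{G\}$ coming from the first factor, using $1\otimes k=k\otimes 1$ for $k\in K$). Tracing the substitution $Y=F^{-1}X$ of Proposition~\ref{prop:isomCG} yields the explicit rule $\nu^*(F)=(F\otimes 1)(1\otimes\bar Y)$, where $\bar Y$ is the matrix of $\Sigma_1$-generators of $C\{G\}$ representing the generic element of $G$; this is precisely the coaction recording the Galois action $\gamma(F)=FM_\gamma$.

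Next I would check the two comodule axioms. On $C\{G\}$, a quotient of the Hopf algebra $H_n$ of $\GL_{n,\Sigma_1}$ by a $\Sigma_1$-Hopf ideal, the generic matrix satisfies $\Delta(\bar Y)=\bar Y\otimes\bar Y$ and $\varepsilon(\bar Y)$ equal to the identity matrix. A direct computation then shows that $(\id\otimes\Delta)\circ\nu^*$ and $(\nu^*\otimes\id)\circ\nu^*$ both send $F$ to $F\otimes\bar Y\otimes\bar Y$, while $(\id\otimes\varepsilon)\circ\nu^*$ fixes $F$. Since $R$ is $\Sigma_1$-generated over $K$ by the entries of $F$ and $1/\det F$, and every map in sight is a $\Sigma$-algebra homomorphism, these identities propagate from the generators to all of $R$, so $\nu^*$ makes $R$ a $C\{G\}$-comodule.

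Finally, condition~\eqref{iso2} holds because the vertical arrow is determined by its restrictions to the two copies of $R$—the inclusion into the second factor on one, and $\nu^*$ on the other—and these are exactly the data encoded by the isomorphism $\tau$. The bulk of the remaining work is bookkeeping: matching the abstractly defined $\nu^*$ with the concrete rule $F\mapsto F\otimes\bar Y$ and verifying that, with the swap and flip inserted, $\tau$ really is the diagram's vertical map. I expect this to be the only delicate point, since the essential input—that reducedness of $R\otimes_K R$ both produces the isomorphism and makes $C\{G\}$ the genuine coordinate ring of $G$—is precisely what the separability hypothesis delivers through Proposition~\ref{prop:isomCG}.
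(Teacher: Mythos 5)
Your proposal is correct and takes exactly the paper's route: the paper's entire proof is the single line ``Follows from Proposition~\ref{prop:isomCG}'', and your argument is precisely the working-out of that reference --- separability gives reducedness of $R\otimes_K R$ by Proposition~\ref{prop:equivconditions}, Proposition~\ref{prop:isomCG} then supplies the $\Sigma$-isomorphism $R\otimes_K R\cong R\otimes_C C\{G\}$ with $C\{G\}$ the genuine coordinate Hopf algebra of $G$, and restricting that isomorphism to the second tensor factor yields the coaction $F\mapsto F\otimes\bar Y$, whose comodule axioms and torsor-diagram condition you verify correctly on the $\Sigma_1$-generators. There is nothing to add beyond this bookkeeping, which you have done carefully.
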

\begin{proof} Follows from Proposition~\ref{prop:isomCG}.
 \end{proof}

\section{Applications}\label{sec:Applications}
We will start by giving a difference dependence statement in the spirit of \cite{CharlotteComp}, which we prove using our own methods. We then show how this is related to Jacobi's theta-function in Section~\ref{sec:Jacobi} and demonstrate our applications in Section~\ref{sec:generalqdifference}, in particular, in Theorem~\ref{thm:6}, which provides a very explicit difference dependence test.
\subsection{General approach}

For any nonzero complex number $a$ we define an automorphism  $\sigma_a :\C(z) \to \C(z)$ by  $$\sigma_a(f)(z)=f(az).$$ Let
$\Sigma_1\subseteq \mathbb C^*$ be a finite subgroup. Then
$\Sigma_1$ is a cyclic group generated by a root of unity
$\zeta$ of degree $t$.
Let $q\in\mathbb C$ be a complex number such that $|q|>1$. Now we
have an action of the group $\Sigma=\Z\oplus\Z/t\Z$ on $\C(z)$, where  the first summand is generated by
$\sigma_q$ and  the second one  is  generated by $\sigma_\zeta$.
Throughout this section the ring $\C(z)$ is supplied with
this structure of a $\Sigma$-ring.

\begin{theorem}\label{thm:app}
Let $R$ be a $\Sigma$-ring containing the field $\C(z)$ such that
$\kkk:=R^{\sigma_q}$ is a field. Suppose additionally that $R$
contains the field $\kkk(z)$. Let $f\in R$ and $a\in \C(z)$ be such
that $f$ is an invertible solution of
\begin{equation}\label{eq:exdiffeq}
\sigma_q(f) = af.
\end{equation}
Then $f$ is $\sigma_\zeta$-algebraically dependent over $\kkk(z)$ if
and only if
\begin{equation}\label{eq:aqbb}
\varphi(a)=\sigma_q(b)/b
\end{equation}
for some $0\ne b \in \C(z)$ and
$1\ne\varphi(x)=x^{n_0}{\sigma_\zeta(x)}^{n_1}\cdot\ldots\cdot
{\sigma_\zeta^{t-1}(x)}^{n_{t-1}}$.
\end{theorem}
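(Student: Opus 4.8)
The plan is to treat the two implications separately; the backward one is a short computation, while the forward one is where the difference Galois theory of Section~\ref{sec:PVTheory} does the work. The identity driving everything is that, since $\Sigma$ is abelian, $\sigma_q$ and $\sigma_\zeta$ commute, so for any $\varphi(x)=x^{n_0}\sigma_\zeta(x)^{n_1}\cdots\sigma_\zeta^{t-1}(x)^{n_{t-1}}$ one has
\[
\sigma_q(\varphi(f))=\varphi(\sigma_q f)=\varphi(af)=\varphi(a)\,\varphi(f),\qquad \varphi(a)\in\C(z).
\]
For ($\Leftarrow$), if $\varphi(a)=\sigma_q(b)/b$ with $0\ne b\in\C(z)$ and $\varphi\ne 1$, then $\sigma_q(\varphi(f)/b)=\varphi(f)/b$, so $\varphi(f)/b\in R^{\sigma_q}=\kkk$ and hence $\varphi(f)\in\kkk(z)$. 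Since $\varphi\ne 1$, this is a nontrivial multiplicative relation among $f,\sigma_\zeta f,\dots,\sigma_\zeta^{t-1}f$ over $\kkk(z)$, proving $\sigma_\zeta$-algebraic dependence. This direction is elementary.

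For ($\Rightarrow$) the conceptual picture is the Galois group. After replacing the base field $\kkk(z)$ (which has $\kkk(z)^{\sigma_q}=\kkk$) by a Noetherian $\Sigma$-pseudofield $K$ with $\Sigma_1$-closed $\sigma_q$-constants, built from $\kkk(z)$ via $F_{\Sigma_1}(-)$ and Propositions~\ref{prop:10}--\ref{prop:11}, equation~\eqref{eq:exdiffeq} has rank one, so by Proposition~\ref{prop:isomCG} its Galois group $G$ is a $\Sigma_1$-closed subgroup of $\Gm_{,\Sigma_1}(C)$, $C=K^\sigma$, acting on the fundamental solution by scaling. The coordinate ring of the full group is the free $\Sigma_1$-algebra $C\{x,1/x\}_{\Sigma_1}$, so $G=\Gm_{,\Sigma_1}$ would make $f,\sigma_\zeta f,\dots$ $\sigma_\zeta$-algebraically independent; thus dependence forces $G\subsetneq\Gm_{,\Sigma_1}$, and by the classification in Example~\ref{ex:subgroupsofGm} (valid over a $\Sigma_1$-closed $C$) $G$ is cut out by one multiplicative equation $\varphi(x)=1$ with $\varphi\ne 1$. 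The invariant $\varphi(f)$ then lies in $L^G=K$ by the Galois correspondence (Proposition~\ref{prop:LGK}, Theorem~\ref{thm:324}), which unwinds to $\varphi(f)\in\kkk(z)$. To handle the arbitrary ring $R$ of the hypotheses directly, the same conclusion can be extracted by hand: take a $\sigma_\zeta$-relation $\sum_m c_m\prod_j(\sigma_\zeta^j f)^{m_j}=0$ with $c_m\in\kkk(z)$ of minimally many terms, apply $\sigma_q$ to produce a second relation with the monomials scaled by $\chi_m:=\prod_j\sigma_\zeta^j(a)^{m_j}\in\C(z)^\ast$, and eliminate a monomial; minimality forces proportionality and hence $\chi_{m}/\chi_{m'}=\sigma_q(d)/d$ for some $d\in\kkk(z)^\ast$ and $m\ne m'$. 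Either way one arrives at $\varphi(a)=\sigma_q(g)/g$ for a nontrivial $\varphi$ and some $g\in\kkk(z)^\ast$.

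The remaining and principal obstacle is to descend this coboundary from $\kkk(z)$ to $\C(z)$: given $g\in\kkk(z)^\ast$ with $\sigma_q(g)/g=\varphi(a)\in\C(z)^\ast$, I must find $b\in\C(z)^\ast$ with $\sigma_q(b)/b=\varphi(a)$. I would argue on $\mathbb P^1_{\kkk}$: the divisor of $\varphi(a)$ is supported at $\C$-rational points (together with $0,\infty$), while $\sigma_q^\ast\operatorname{div}(g)-\operatorname{div}(g)=\operatorname{div}(\varphi(a))$. Restricting to closed points that are not $\C$-rational, where $\varphi(a)$ contributes nothing, shows that the corresponding part of $\operatorname{div}(g)$ is $\sigma_q$-invariant; but $z\mapsto qz$ acts on those points with only infinite orbits because $|q|>1$, so a finite invariant divisor there must vanish. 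Hence $\operatorname{div}(g)$ is supported on $\C$-points, giving $g=c\,g_0$ with $c\in\kkk^\ast$ and $g_0\in\C(z)^\ast$; as $\sigma_q(c)=c$, the element $b:=g_0$ works. This divisor descent, with its essential use of $|q|>1$, together with the careful construction of the base pseudofield $K$ having $\Sigma_1$-closed constants, are the steps requiring real care; everything else is the formal Galois correspondence specialized to $\Gm_{,\Sigma_1}$.
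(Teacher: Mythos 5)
Your proof is correct, but it reaches the conclusion by a genuinely different route from the paper's, so a comparison is in order. The paper's forward direction runs entirely through its Galois machinery: it embeds $\kkk(z)$ into the pseudofield $K=\Qt(C[z])$, where $C$ is the $\Sigma_1$-closure of $\kkk$, forms an actual Picard--Vessiot ring over $K$ for~\eqref{eq:exdiffeq}, shows via the torsor argument of Section~\ref{sec:torsor} that dependence forces the Galois group to be a proper subgroup of ${\Gm}_{,\Sigma_1}$, extracts a multiplicative $\varphi$ from Example~\ref{ex:subgroupsofGm}, and obtains $b:=\varphi(\bar f)\in K=C(z)$ from Proposition~\ref{prop:LGK}; since the coefficients of this $b$ lie in the pseudofield $C$ rather than in $\C$, the paper must then descend in two steps---first to $(F_{\Sigma_1}\C)(z)$, by rewriting~\eqref{eq:dep} as a system of difference-polynomial equations and inequations in the unknown coefficients and invoking difference-closedness of $F_{\Sigma_1}\C$ \cite[Proposition~25~(3)]{Dima}, and then to $\C(z)$ by applying the evaluation $\sigma_q$-homomorphism $\gamma_e$, which replaces $\varphi$ by its $e_0$-component. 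Your elimination argument (apply $\sigma_q$ to a shortest relation, cross-multiply and subtract; minimality forces all coefficients of the shorter relation to vanish) bypasses the Picard--Vessiot construction altogether and lands directly on $\varphi(a)=\sigma_q(g)/g$ with $g\in\kkk(z)^\ast$ and $\varphi\neq1$; and your descent is geometric rather than a Nullstellensatz-type transfer: on $\mathbb{P}^1_{\kkk}$ the substitution $z\mapsto qz$ has only infinite orbits on closed points other than $0$ and $\infty$ (in particular on all non-$\C$-rational points) because $|q|>1$, so the non-$\C$-rational part of $\operatorname{div}(g)$, being shift-invariant and of finite support, vanishes, whence $g=c\,g_0$ with $c\in\kkk^\ast$ and $g_0\in\C(z)^\ast$. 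What each buys: your route is shorter, self-contained, and makes the role of $|q|>1$ transparent; the paper's route exercises the theory the paper was written to build, and the identification of the Galois group it produces is reused downstream (Proposition~\ref{prop:homo} explicitly relies on facts obtained in the proof of Theorem~\ref{thm:app}), which your argument would not supply.

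Two caveats. First, your Galois-theoretic sketch is not a correct alternative as written: $L^G=K$ gives $\varphi(\bar f)\in C(z)$, not $\kkk(z)$---the gap between coefficients in $C$ and coefficients in $\C$ is precisely what the paper's two descent steps exist to close---so in your write-up only the elimination argument genuinely delivers $g\in\kkk(z)^\ast$, and your proof should be understood as resting on that branch. Second, your divisor argument tacitly assumes that $z$ is transcendental over $\kkk$, so that $\kkk(z)$ is the function field of $\mathbb{P}^1_{\kkk}$; the paper proves this at the outset by the Vandermonde argument, and your proof needs it as well, though it is easy to supply.
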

\begin{proof}
If~\eqref{eq:aqbb} holds, then
$$
\sigma_q(\varphi(f)/b)=
\varphi(\sigma_q(f))/\sigma_q(b)=\varphi(af)/\sigma_q(b)=\varphi(a)\varphi(f)/\sigma_q(b)=\varphi(f)/b.
$$
Therefore,
$$
\varphi(f)/b=c\in R^{\sigma_q}=\kkk.$$
Thus,
$
\varphi(f)=c\cdot b\in \kkk(z)$,
which gives a $\Sigma_1$-algebraic dependence for $f$ over $\kkk(z)$.
First, note that $z$ is algebraically independent over
$\kkk$. Indeed, suppose that there is a relation
$$a_n\cdot z^n+a_{n-1}\cdot z^{n-1}+\ldots+a_0=0 $$ for some $a_i\in \kkk$. Applying
$\sigma_q$ $n$ times, we obtain the following system of linear equations
$$
\begin{pmatrix}
1&1&\ldots&1&1\\
q^n&q^{n-1}&\ldots&q&1\\
\vdots&\vdots&\ddots&\vdots&\vdots\\
{(q^n)}^{n}&{(q^{n-1})}^{n}&\ldots&q^{n}&1\\
\end{pmatrix}
\begin{pmatrix}
a_n\cdot z^n\\
a_{n-1}\cdot z^{n-1}\\
\vdots\\
a_0\\
\end{pmatrix}
=0
$$
Since the matrix is invertible,  our relation is of the form $a\cdot z^k=0$
for some $a\in \kkk$. Since $\kkk$ is a field, we have $z^k=0$. However, $z\in\C(z)$, which is a contradiction.

Assume now that $f$ is $\Sigma_1$-algebraically dependent over
$\kkk(z)$. Let $C$ be the $\Sigma_1$-closure of $\kkk$ and $K$ be the total
ring of fraction of the polynomial ring $C[z]$, where
$\sigma_q(z)=qz$ and  $\sigma_\zeta(z)=\zeta z$. So, the field $\kkk(z)$
is naturally embedded into $K$. Let $D$ be the smallest
$\Sigma$-subring in $R$ generated by $\kkk(z)$, $f$, and $1/f$ and let
$$\m\subseteq K\otimes_{\kkk(z)}D$$ be a maximal $\Sigma$-ideal.
Then, $$L=\left(K\otimes_{\kkk(z)}D\right)/\m$$ is a Picard--Vessiot ring over
$K$ for equation~\eqref{eq:exdiffeq}. The image of $f$ in $L$ will
be denoted by $\bar f$. Since $f$ is $\Sigma_1$-algebraically
dependent over $\kkk(z)$, $\bar f$ is $\Sigma_1$-algebraically
dependent over $K$.

It follows from Section~\ref{sec:torsor} that $\bar f$ is
$\Sigma_1$-algebraically dependent over $K$ if and only the
$\Sigma$-Galois group $G$ of equation~\eqref{eq:exdiffeq} is a
proper subgroup of ${\Gm}_{,\Sigma_1}$. Then, by
Example~\ref{ex:subgroupsofGm}, there exists a multiplicative
$$
\varphi\in (F_{\Sigma_1}\Q)\{x,1/x\}_{\Sigma_1}
$$
(see also~\eqref{eq:defofFSigma}) such that $G$ is given by the
equation
$
\varphi(x) = 1$.
Therefore, for all $\phi$ in the Galois group,
$$
\phi(\varphi(\bar f))= \varphi(\phi(\bar f))=\varphi(c_\phi\cdot
\bar f) = \varphi(c_\phi)\cdot \varphi(\bar f)=1\cdot \varphi(\bar
f)= \varphi(\bar f).
$$
Hence, by Proposition~\ref{prop:LGK}, we have
$$
b:= \varphi(\bar f) \in K = C(z),$$
as in \cite[Proposition~3.1]{hardouin_differential_2008}. Since $f$
is invertible, $\bar f$ is also invertible and, since $\varphi$ is
multiplicative, $\varphi(\bar f)$ is invertible as well.  Therefore,
\begin{align}\label{eq:dep}
\varphi(a) = \varphi\left(\sigma_q(\bar f)/\bar f\right)=
\sigma_q\left(\varphi(\bar f)\right)/\varphi(\bar f)= \sigma_q(b)/b.
\end{align}
We will show now that $b$ can be chosen from $(F_{\Sigma_1}\C)(z)$
satisfying~\eqref{eq:aqbb} as in
\cite[Corollary~3.2]{hardouin_differential_2008}. We have the
equalities $a=\bar a/c$ and $b=\bar b /d$, where
$\bar a, c\in \C[z]$ and $\bar b, d\in C[z]$. Consider the
coefficients of $\bar b$ and $d$ as difference indeterminates. Then,
equation~\eqref{eq:dep} can be considered as a system of equations
in the coefficients of $\bar b$ and $d$. Indeed,
equation~\eqref{eq:dep} is equivalent to
$$
\varphi(\bar a/c)=\sigma_q{\left(\bar b/d\right)}\big/{\left(\bar b/d\right)}.
$$
So, we have
\begin{equation}\label{eq:phiabcd}
\varphi(\bar a)\cdot\sigma_q(d)\cdot\bar b-\varphi(c)\cdot\sigma_q(\bar b)\cdot d=0
\end{equation}
The left-hand side of equation~\eqref{eq:phiabcd} is a polynomial in
$z$. The desired system of equations is given by the equalities for
all coefficients.
Now note that the condition of $y\in C[z]$ being invertible in $C(z)$
is given by the inequation
$$
y\cdot\sigma_\zeta(y)\cdot\ldots\cdot\sigma_{\zeta}^{t-1}(y)\neq 0.
$$
Therefore, the coefficients of the polynomials $\bar b$ and $d$ are
given by the system of equations and inequalities. Since the
pseudofield $F_{\Sigma_1}\C$ is $\Sigma_1$-closed, existence of
invertible $\bar b$ and $d$ with coefficients in $C$ implies
existence of invertible $\bar b$ and $d$ with coefficients in
$F_{\Sigma_1}\C$ (see~\cite[Proposition~25~(3)]{Dima}).

 We will now show that $b\in\C(z)$ and $\varphi$ can be found of
the desired form. We have proven that
\begin{equation}\label{eq:overc}
\varphi(a)=\sigma_q(b)/b
\end{equation}
for some $b\in (F_{\Sigma_1}\C)(z)$. It follows from
Example~\ref{ex:subgroupsofGm} that
\begin{align*}
\varphi(x)=&e_0\cdot x^{n_{0,0}}\cdot\sigma_\zeta(
x)^{n_{0,1}}\cdot\ldots\cdot\sigma_\zeta^{t-1}(x)^{n_{0,t-1}}
+\ldots +\\
&+e_{t-1}\cdot
x^{n_{t-1,0}}\cdot\sigma_\zeta(x)^{n_{t-1,1}}\cdot\ldots\cdot\sigma_\zeta^{t-1}(x)^{n_{t-1,t-1}}.
\end{align*}
Note that if $a\in (F_{\Sigma_1}\C)(z)$ belongs to $\C(z)$, then
$$
\gamma_e(a)=a\quad \text{and}\quad
\gamma_e(\sigma_\zeta^i(a))=\sigma_\zeta^i(\gamma_e(a)),$$
where the $\sigma_q$-homomorphism $\gamma_e\colon
(F_{\Sigma_1}\C)(z)\to \C(z)$ is defined in~\eqref{eq:gammamu}.
Applying this homomorphism to~\eqref{eq:overc}, we obtain
$$
a^{n_{0,0}}\cdot\sigma_\zeta(
a)^{n_{0,1}}\cdot\ldots\cdot\sigma_{\zeta}^{t-1}(a)^{n_{0,t-1}}=\sigma_q(\gamma_e(b))/\gamma_e(b),
$$
which concludes the proof.  \end{proof}

\subsection{Setup for meromorphic functions}
The ring of all meromorphic functions on $\mathbb C^*$ will be
denoted by $\mathcal M$. For any nonzero complex number $a$ we
define an automorphism $\sigma_a : \mathcal M \to \mathcal M$ by
$$\sigma_a(f)(z)=f(az).$$ Let $\Sigma_1\subseteq \mathbb C^*$ be a
finite subgroup. Then $\Sigma_1$ is a cyclic group generated by a
root of unity $\zeta$ of degree $t$.
Let $q\in\mathbb C$ be  such that $|q|>1$. Now, we
have an action of the group $\Sigma=\mathbb Z\oplus\mathbb
Z/t\mathbb Z$, where the first summand is generated by $\sigma_q$
and  the second one is generated by $\sigma_\zeta$.

The set of all $\sigma_q$-invariant meromorphic functions will be
denoted by $\kk$. As we can see $\kk$ is a $\Sigma_1$-ring. Let $C$ be
the $\Sigma_1$-closure of the field $\kk$. Supply the polynomial ring
$C[z]$ with the following structure of a $\Sigma$-ring: $$\sigma_q(z)=qz\quad \text{and}\quad
\sigma_{\zeta}(z)=\zeta z.$$ Let $K$ be the total ring of fractions of
$C[z]$, so, $K$ is a Noetherian $\Sigma$-pseudofield with
$\Sigma_1$-closed subpseudofield of $\sigma_q$-constants $C$.
The meromorphic function $z$ is algebraically independent over $\kk$.
Hence, the minimal $\Sigma$-subfield in $\mathcal M$ generated by $\kk$
and $z$ is the ring of rational functions $\kk(z)$. Thus, this field can
be naturally embedded into $K$ with $z$ being mapped to $z$.

\subsection{Jacobi's theta-function}\label{sec:Jacobi}
We will study $\Sigma_1$-relations for Jacobi's theta-function (being a solution of 
$\theta_q(qz) = -qz\cdot\theta_q(z)$)\begin{equation}\label{eq:thetafirst} \theta_q(z) = -\sum_{n \in\Z}(-1)^nq^\frac{-n(n-1)}{2}z^n,\quad z \in \C,
\end{equation}
with coefficients in $\kk(z)$.

\subsubsection{Relations for $\theta_q$ with $q$-periodic coefficients}\label{sec:431}
First,  we will show that there are many relations of such form:
\begin{enumerate}
\item Suppose that $t\geqslant 3$. Then, the
function
$$
\lambda=\theta_q(z)\cdot\theta_q^{-2}(\zeta z)\cdot\theta_q(\zeta^2
z)
$$
is $\sigma_q$-invariant. Therefore, $\theta_q$ vanishes the
following nontrivial $\Sigma_1$-polynomial:
$$
y\cdot\sigma_{\zeta^2}(y) - \lambda\cdot (\sigma_\zeta(y))^2\in
\kk(z)\{y\}.
$$
\item Suppose that $t=m\cdot n$,
where $m$ and $n$ are coprime. Then, there exist two numbers
$u\ne v$ such that the automorphisms $\sigma_{\zeta}^u\ne \sigma_{\zeta}^v$ but
$\sigma_{\zeta}^{un}=\sigma_{\zeta}^{vn}\neq \id$. Then, the function
$$
\lambda = \theta_q^n(\zeta^uz)\cdot\theta_q^{-n}(\zeta^vz)
$$
is $\sigma_q$-invariant. Therefore, $\theta_q$ vanishes the
following nontrivial $\Sigma_1$-polynomial:
$$
(\sigma_{\zeta^u}(y))^n - \lambda \cdot(\sigma_{\zeta^v}(y))^n\in
\kk(z)\{y\}.
$$
\item For any given $\zeta$, the function
$$
\lambda = \theta_q^{t}(z)\cdot\theta_q^{-t}(\zeta z)
$$
is $\sigma_q$-constant. Therefore, $\theta_q$ vanishes the following
nontrivial $\Sigma_1$-polynomial:
$$
y^t - \lambda\cdot (\sigma_\zeta(y))^t\in \kk(z)\{y\}.
$$
\end{enumerate}

\subsubsection{Periodic difference-algebraic independence for $\theta_q$ with $q$-periodic coefficients}

We will show now that in some sense these relations are the only possible ones.

\begin{lemma}\label{lemma1}
Suppose that for some rational function $b\in \kk(z)$ there is a
relation 
$$
(-qz)^{k_0}\left(-q\zeta
z\right)^{k_1}\cdot\ldots\cdot\left(-q\zeta^{t-1}z\right)^{k_{t-1}}=\sigma_q(b)/b
$$
for some $k_i\in \mathbb Z$. Then,
$
\sum\limits_{i=0}^{t-1}k_i =0$.
\end{lemma}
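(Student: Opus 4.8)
The plan is to derive the equality $\sum_i k_i = 0$ from a single order-of-vanishing (valuation) computation in the rational function field $\kk(z)$, comparing the two sides of the given relation at one place.

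First I would simplify the left-hand side. Since $\zeta$ is a root of unity and $q$ a nonzero constant, each factor satisfies $(-q\zeta^i z)^{k_i} = (-q\zeta^i)^{k_i}\, z^{k_i}$, so the whole product collapses to $c\cdot z^{S}$, where $S := \sum_{i=0}^{t-1} k_i$ and $c = \prod_{i} (-q\zeta^i)^{k_i} \in \kk^\ast$ is a nonzero $\sigma_q$-invariant constant (note that $q,\zeta$, being constants, are fixed by $\sigma_q$ and hence lie in $\kk$). Thus the relation reads $c\, z^{S} = \sigma_q(b)/b$.

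Next I would invoke the $z$-adic valuation $v_0$ at $z = 0$ on $\kk(z)$, that is, the order of vanishing at the origin with $\kk$ as coefficient field, normalized so that $v_0(z) = 1$. The one observation requiring care — and the only point that is not purely formal — is that $\sigma_q$ preserves $v_0$. Indeed, $\sigma_q$ is a $\kk$-automorphism of $\kk(z)$ which fixes $\kk$ pointwise (every nonzero element of $\kk$ being a $v_0$-unit) and sends the uniformizer $z$ to $qz$ with $q$ a unit; writing any $g \in \kk(z)^\ast$ in lowest terms in $z$ shows that applying $\sigma_q$ rescales $z$ and fixes coefficients, hence does not change the lowest-order exponent, giving $v_0(\sigma_q(g)) = v_0(g)$. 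Consequently $v_0(\sigma_q(b)/b) = v_0(\sigma_q(b)) - v_0(b) = 0$.

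Finally I would evaluate the left-hand side: $v_0(c\, z^{S}) = v_0(c) + S\cdot v_0(z) = 0 + S = S$, since $c$ is a $v_0$-unit. Equating the valuations of the two sides yields $S = 0$, which is exactly the claim. The argument is essentially self-contained; the sole step to check is the $\sigma_q$-invariance of $v_0$, and one could equally run the identical computation using the degree (the order at $z = \infty$) in place of $v_0$, since $\sigma_q(b)$ and $b$ have the same degree in $z$ while $c\,z^S$ has degree $S$.
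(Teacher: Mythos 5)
Your proof is correct and takes essentially the same approach as the paper: the paper's (very terse) proof writes $\sigma_q(b)/b$ as a quotient $h/g$ of polynomials of equal degree and compares degrees with the left-hand side $c\,z^S$, i.e.\ it runs your valuation argument at the place $z=\infty$ rather than $z=0$. Since the two places are interchangeable here and you explicitly note the degree variant yourself, there is no substantive difference.
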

\begin{proof}
The function $\sigma_q(b)/b$ is of the following form
$
\sigma_q(b)/b=h/g$,
where $h$ and $g$ have the same degree and the same leading
coefficient. The equality  follows from the condition on the degree.
\end{proof}

\begin{lemma}\label{lemma2}
Suppose that there exist $\lambda\in \kk(z)$ and $\eta$, $q \in \C$ such that
$
\sigma_q(\lambda)=\eta\cdot\lambda$,
where $|\eta|=1$ and $0\ne q$ is not a root of unity. Then, $\lambda\in \kk$ and $\eta=1$.
\end{lemma}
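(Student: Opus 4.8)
The plan is to treat $z$ as a transcendental element over the field $\kk$, which is legitimate since the setup records that $z$ is algebraically independent over $\kk$; thus $\kk(z)$ is a genuine field of rational functions and $\sigma_q$ acts on it as the $\kk$-algebra automorphism fixing $\kk$ pointwise and sending $z\mapsto qz$. I would write $\lambda = P/Q$ in lowest terms with $P,Q\in\kk[z]$, say $P=\sum_{i=0}^m p_i z^i$ and $Q=\sum_{j=0}^n q_j z^j$ with $p_m,q_n\ne 0$. Applying $\sigma_q$ multiplies the coefficient of $z^i$ by $q^i$, so $\sigma_q(P)=\sum_i p_i q^i z^i$ has the same degree $m$ (its leading coefficient $p_m q^m$ is nonzero), and likewise $\deg\sigma_q(Q)=n$.

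The heart of the argument is to show that $P$ and $Q$ are monomials. Clearing denominators in $\sigma_q(\lambda)=\eta\lambda$ gives the identity $\sigma_q(P)\cdot Q=\eta\,P\cdot\sigma_q(Q)$ in $\kk[z]$. The right-hand side is divisible by $P$, and since $\gcd(P,Q)=1$ we conclude $P\mid\sigma_q(P)$; as both have degree $m$, this forces $\sigma_q(P)=\mu P$ for some unit $\mu\in\kk^\ast$, and comparing leading coefficients gives $\mu=q^m$. Matching the coefficient of $z^i$ then yields $p_i(q^i-q^m)=0$ for every $i$; because $|q|>1$ we have $q^i\ne q^m$ whenever $i\ne m$, so $p_i=0$ for $i<m$ and hence $P=p_m z^m$. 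The same reasoning gives $Q=q_n z^n$, so $\lambda=c\,z^d$ with $c=p_m/q_n\in\kk^\ast$ and $d=m-n\in\Z$.

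It then remains only to read off $\eta$ and $d$. We have $\sigma_q(\lambda)=q^d c\,z^d=q^d\lambda$, so $\eta=q^d$. Since $|\eta|=1$ while $|q^d|=|q|^d$ with $|q|>1$, the equality $|q|^d=1$ forces $d=0$; therefore $\eta=1$ and $\lambda=c\in\kk$, as claimed.

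I expect the only delicate point to be the coprimality step, namely justifying that the lowest-terms representation behaves well under $\sigma_q$ and that $P\mid\sigma_q(P)$ together with the degree count gives proportionality. This becomes routine once one commits to working in the polynomial ring $\kk[z]$ over the \emph{field} $\kk$ (so that unique factorization and the notion of lowest terms are available), rather than reasoning with zeros and poles of $\lambda$ as a meromorphic function, where the coefficients in $\kk$ are themselves functions of $z$ and the bookkeeping would be muddled. Everything after that is a direct degree-and-coefficient comparison, and the hypotheses $|q|>1$ and $|\eta|=1$ enter exactly at the two places where roots of unity must be excluded.
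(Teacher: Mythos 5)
Your proof is correct, but it takes a genuinely different route from the paper's. The paper writes $\lambda$ in factored form $a\cdot z^r\,\frac{(z-a_1)\cdots(z-a_n)}{(z-b_1)\cdots(z-b_m)}$, compares leading coefficients to get $\eta=q^{\,r+n-m}$ (whence $\eta=1$ and $r+n-m=0$, since $|\eta|=1<|q|$), and then argues on the set of roots: the functional equation forces $\{a_1,\ldots,a_n\}=\{a_1/q,\ldots,a_n/q\}$, and chasing this finite set produces a cycle $a_1=a_2/q,\;\ldots,\;a_{i_0}=a_1/q$, hence $q^{i_0}=1$, contradicting $|q|>1$; so $n=m=0$ and then $r=0$. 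You instead stay entirely inside $\kk[z]$: from $\sigma_q(P)\,Q=\eta\,P\,\sigma_q(Q)$ and $\gcd(P,Q)=1$ you get $P\mid\sigma_q(P)$, hence $\sigma_q(P)=q^mP$, i.e.\ $P$ is an eigenvector of $\sigma_q$; since the eigenvalues $q^i$ of $\sigma_q$ on monomials are pairwise distinct when $|q|>1$, $P$ (and likewise $Q$) must be a monomial, and then $\eta=q^{m-n}$ with $|\eta|=1$ forces $m=n$ and $\eta=1$. Your version buys a real gain in rigor: the paper's factorization tacitly places the roots $a_i,b_j$ in $\kk$, but $\kk$ (the field of $q$-invariant meromorphic functions) is not algebraically closed, so that argument really lives over $\overline{\kk}$ with $\sigma_q$ extended --- a point the paper glosses over; your argument needs only that $\kk[z]$ is a UFD over the field $\kk$ and that nonzero complex constants such as $q^i-q^m$ are units in $\kk$. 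What the paper's root-based bookkeeping buys in exchange is continuity with the rest of the section: the same zero/pole-orbit mechanism (zeros and poles taken modulo the group generated by $q$ and $\zeta$) is exactly what drives Lemma~\ref{lemma1} and the explicit criterion of Theorem~\ref{thm:6}.
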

\begin{proof}
Let 
$$
\lambda=a\cdot z^r\cdot
\frac{(z-a_1)\cdot\ldots\cdot(z-a_n)}{(z-b_1)\cdot\ldots\cdot(z-b_m)}
$$
be the irreducible representation of $\lambda$, where $a_i,b_i\in \overline\kk$, the algebraic closure of $\kk$. By
the hypothesis, we have
$$
q^{r+n-m}\cdot\frac{\left(z-a_1/q\right)\cdot\ldots\cdot\left(z-a_n/q\right)}{\left(z-b_1/q\right)\cdot\ldots\cdot\left(z-b_m/q\right)}
=\eta\cdot\frac{(z-a_1)\cdot\ldots\cdot(z-a_n)}{(z-b_1)\cdot\ldots\cdot(z-b_m)}.
$$
Therefore, $q^{r+n-m}=\eta$. Thus, $r+n-m=0$ and $\eta=1$. Moreover, the
sets
$
\{a_1,\ldots,a_n\}$ and $\left\{a_1/q,\ldots,a_n/q\right\}$
must coincide.
If $\lambda\notin \kk$, then, from  $r+n-m=0$, it follows that
either $n>0$ or $m>0$. Suppose that the first inequality holds. There exists $i$
such that $a_1=\frac{a_i}{q}$. If $i=1$, then we set $i_0=1$. Otherwise, $i>1$ and, rearranging the
elements $\{a_j\}$ for $j>1$ suppose that $i=2$. Again, $a_2=\frac{a_i}{q}$. If $i=1$, we
set $i_0=i$. Otherwise, $i>2$ and rearranging the elements $\{a_j\}$ for
$j>2$, suppose that $i=3$, and so on. Since there are only
finitely many elements, the process will stop and we obtain a number
$i_0$ with the following system of equations:
$$
a_1=a_2/q,\
a_2=a_3/q,\
\ldots,\
a_{i_0}=a_1/q
$$
Therefore, $q^{i_0}=1$. Thus, $|q|=1$, which is a contradiction.
\end{proof}

\begin{proposition}\label{prop:homo}
Let the pseudofield $K$ be as above. Let $R$ be a Picard--Vessiot
ring over $K$ for the equation
$\sigma_q(y)=-qz\cdot y$
and $L$ be the corresponding Picard--Vessiot pseudofield. Suppose
 $f$ is an invertible solution in $R$. Then $L\otimes_K R$ is a
graded ring such that $f$ is of degree $1$ and $\sigma_q$ and
$\sigma_\zeta$ preserve the grading.
\end{proposition}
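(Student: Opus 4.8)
The plan is to realize $L\otimes_K R$ as the trivialization $L\otimes_C C\{G\}$ of the Galois torsor attached to the equation $\sigma_q(y)=-qz\cdot y$, and then to pull back the natural $\Z$-grading of $C\{G\}$.

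First I would secure the torsor isomorphism. Since $C$ is the $\Sigma_1$-closure of $\kk$, we have $C\cong F_{\Sigma_1}(K_0)$ for an algebraically closed field $K_0$ of characteristic $0$; hence the residue fields of $K=\Qt(C[z])$ are perfect, and by the remark following Theorem~\ref{thm:324} the extension $R/K$ is separable, i.e. $R\otimes_K R$ is reduced. Propositions~\ref{prop:isomCG} and~\ref{prop:Galoistorsor} then furnish a $\Sigma$-isomorphism $R\otimes_K R\cong R\otimes_C C\{G\}$, where $G\subset{\Gm}_{,\Sigma_1}$ is the Galois group and $C\{G\}=C\{y,1/y\}_{\Sigma_1}/J$ for the $\sigma_q$-invariant element $y=f^{-1}\otimes f$. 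Localizing the left tensor factor at the non-zero-divisors of $R$ gives $L\otimes_K R\cong L\otimes_C C\{G\}$, under which the element $f$ (viewed as $1\otimes f$) is carried to $f\otimes\bar y$, with $\bar y$ the class of $y$ in $C\{G\}$.

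The core of the argument is to equip $C\{G\}$ with a $\Z$-grading in which $\bar y$ is homogeneous of degree $1$. On $C\{y,1/y\}_{\Sigma_1}=C[\,y_\alpha^{\pm1}\mid\alpha\in\Sigma_1\,]$ take the exponent-sum grading $\deg\big(\prod_\alpha y_\alpha^{k_\alpha}\big)=\sum_\alpha k_\alpha$, placing $C$ (and its idempotents) in degree $0$ and $y$ in degree $1$. By Example~\ref{ex:subgroupsofGm} the defining ideal $J$ of $G\subsetneq{\Gm}_{,\Sigma_1}$ is generated by multiplicative relations; projecting to a residue component of the $\Sigma_1$-closed pseudofield $C$ turns each of them into a monomial identity $\prod_i\sigma_\zeta^i(a)^{k_i}=\sigma_q(b)/b$ with $a=-qz$, exactly as in the passage to $\varphi(a)=\sigma_q(b)/b$ in Theorem~\ref{thm:app} (using $b=\varphi(\bar f)\in L^G=K$ via Proposition~\ref{prop:LGK}). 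Lemma~\ref{lemma1} applies to each such identity and forces $\sum_i k_i=0$, and by the $\sigma_\zeta$-symmetry of the components the same holds for every relation; hence all generators of $J$ are homogeneous of degree $0$, the exponent-sum grading descends to $C\{G\}$, and $\bar y$ becomes homogeneous of degree $1$.

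Finally I would transport the grading and verify the invariances. Grading $L\otimes_C C\{G\}$ by putting $L$ in degree $0$ makes $f\otimes\bar y$, and therefore $f$, homogeneous of degree $1$ in $L\otimes_K R$. Since $C\{G\}$ is generated over the $\sigma_q$-constants $C$ by the $\sigma_q$-invariant elements $y_\alpha^{\pm1}$, the automorphism $\sigma_q$ acts only through the $L$-factor and so is graded; $\sigma_\zeta$ cyclically permutes the $y_\alpha$ and thus preserves the exponent sum, so it is graded as well. I expect the main obstacle to be the descent of the grading in the previous paragraph: everything rests on the vanishing of the exponent sums of the relations defining $G$, which is precisely where Lemma~\ref{lemma1} is indispensable (and where the idempotent bookkeeping of Example~\ref{ex:subgroupsofGm} must be handled with care); once the torsor isomorphism and the image $f\mapsto f\otimes\bar y$ are pinned down, the remaining checks are routine.
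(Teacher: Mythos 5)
Your proposal is correct and takes essentially the same route as the paper's own proof: both pass to the torsor isomorphism $L\otimes_K R\cong L\otimes_C C\{G\}$ from Proposition~\ref{prop:isomCG}, use the monomial generators of the defining ideal $J$ from Example~\ref{ex:subgroupsofGm} together with the relation $\varphi(-qz)=\sigma_q(b)/b$ obtained in Theorem~\ref{thm:app} and Lemma~\ref{lemma1} to conclude that $J$ is homogeneous for the exponent-sum grading, and then transport the grading so that $f$ (corresponding to $\bar f\otimes \bar y$) has degree $1$ while $\sigma_q$ and $\sigma_\zeta$ act homogeneously. Your explicit check of separability (characteristic zero, via the remark following Theorem~\ref{thm:324}) is a detail the paper leaves implicit, but it does not change the approach.
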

\begin{proof}
It follows from Proposition~\ref{prop:isomCG}  that
$R\otimes_K R=R\otimes_C C\{G\}$, where $G$ is the
corresponding Galois group. Multiplying by $L\mathop{\otimes}_R -$,
we obtain: $L\otimes_K R=L\otimes_CC\{G\}$. Since
group $G$ is a subgroup of $\G_m$,
$$C\{G\}=C\{x,1/x\}_{\Sigma_1}/J,$$ where the ideal $J$ is generated
by difference polynomials of the form
$$e_0\cdot x^{k_0}\cdot\left(\sigma_\zeta
x\right)^{k_1}\cdot\ldots\cdot(\sigma_\zeta^{t-1}x)^{k_{t-1}}-e_0$$
(see Example~\ref{ex:subgroupsofGm} for details). The ring
$C\{x,1/x\}_{\Sigma_1}$ is a graded ring such that $x$ is
homogeneous of degree $1$ and $\sigma_\zeta$ preserves the grading.
In the proof of Theorem~\ref{thm:app}, we have obtained that
$$
{(-qz)}^{k_0}\cdot {\left(-q\zeta
z\right)}^{k_1}\cdot\ldots\cdot\left(-q\zeta^{t-1}
z\right)^{k_{t-1}} = \sigma_q(b)/b
$$
for some $b\in \C(z)$. Thus, it follows from Lemma~\ref{lemma1} that
$$\sum_{i=0}^{t-1}k_{i}=0.$$  Therefore, the ideal $J$ is homogeneous.
Hence, $C\{G\}$ is graded. Thus, $L\mathop{\otimes}_C C\{G\}$ is
graded. Since $f=\bar f\cdot y$, where $\bar f\in L$ is a solution
of the equation in $L$, then $f$ is a homogeneous element of degree
$1$. Since $x$ is $\sigma_q$-constant, $\sigma_q$ preserves the
grading.
\end{proof}

\begin{theorem}\label{thm:Benrelation}
For every prime number $t$, every relation of the
form
\begin{equation}\label{eq:Ben}
\lambda_0 +
\sum_{d=1}^{t-1}\left(\lambda_{0d}\cdot\theta_q{(z)}^d+\lambda_{1d}\cdot\theta_q{\left(\zeta
z\right)}^d+\ldots+\lambda_{t-1 d}\cdot\theta_q{\left(\zeta^{t-1}z\right)}^d\right)=0,
\end{equation}
with $\lambda_0,\lambda_{ij}\in \kk(z)$, implies that
$\lambda_0=\lambda_{ij}=0$.
\end{theorem}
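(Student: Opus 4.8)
The plan is to transport relation~\eqref{eq:Ben} into the Picard--Vessiot ring, split it by the grading of Proposition~\ref{prop:homo}, and then prove a linear independence statement for the pure powers of the translates of $\theta_q$; the whole point will be that primality of $t$ keeps the relevant characters of the Galois group distinct. First I would realize the relation inside $R$. The functions $\theta_q(\zeta^i z)=\sigma_\zeta^i(\theta_q)$ together with $\kk(z)$ generate a $\Sigma$-subring $D\subset\mathcal M$, and, as in the proof of Theorem~\ref{thm:app}, there is a $\Sigma$-homomorphism $D\to R$ carrying $\theta_q$ to the invertible solution $f$ and each $\theta_q(\zeta^i z)$ to $\sigma_\zeta^i(f)$, while the coefficients $\lambda_0,\lambda_{ij}\in\kk(z)$ map in through $\kk(z)\hookrightarrow K\hookrightarrow R$, which is injective since $R$ is $\Sigma$-simple over the pseudofield $K$. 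Thus it suffices to show that the image of~\eqref{eq:Ben}, viewed inside $L\otimes_K R$, forces every coefficient to vanish. By Proposition~\ref{prop:homo} the ring $L\otimes_K R$ is graded with each $\sigma_\zeta^i(f)$ homogeneous of degree $1$ and every element of $L$ (in particular each $\lambda$) of degree $0$. Hence $\lambda_{id}(\sigma_\zeta^i f)^d$ is homogeneous of degree $d$ and $\lambda_0$ of degree $0$, so comparing homogeneous components splits~\eqref{eq:Ben} into $\lambda_0=0$ together with, for each $1\le d\le t-1$,
\[
\sum_{i=0}^{t-1}\lambda_{id}\,(\sigma_\zeta^i f)^d=0 .
\]

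Next, for a fixed such $d$, I would pass through the torsor isomorphism $L\otimes_K R\cong L\otimes_C C\{G\}$ of Proposition~\ref{prop:isomCG}, under which $\sigma_\zeta^i(f)=(\sigma_\zeta^i\bar f)\cdot m_i$, where $\bar f\in L$ is the solution in the pseudofield, $m_i:=\overline{\sigma_\zeta^i(x)}\in C\{G\}$ is the grouplike coordinate monomial, and $\sigma_\zeta^i\bar f$ is a unit of $L$. The displayed relation becomes $\sum_i\mu_i\,m_i^{\,d}=0$ with $\mu_i:=\lambda_{id}(\sigma_\zeta^i\bar f)^d\in L$, so everything reduces to the $C$-linear independence of the $t$ monomials $m_0^{\,d},\dots,m_{t-1}^{\,d}$ in $C\{G\}$: independence gives $\mu_i=0$, and cancelling the unit $(\sigma_\zeta^i\bar f)^d$ yields $\lambda_{id}=0$, whence $\lambda_{id}=0$ in $\kk(z)$ by the injectivity above.

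The heart of the matter is this independence, and it is where both primality of $t$ and Lemmas~\ref{lemma1}--\ref{lemma2} enter. As in Example~\ref{ex:subgroupsofGm}, the Galois group $G\subset\Gm_{,\Sigma_1}$ is cut out by multiplicative characters $\prod_i(\sigma_\zeta^i x)^{v_i}$, and such a character lies in the defining lattice $\Lambda$ of $G$ exactly when $\prod_i(\sigma_\zeta^i\bar f)^{v_i}\in K$. Computing $\sigma_q\!\big(\prod_i(\sigma_\zeta^i\bar f)^{v_i}\big)\big/\prod_i(\sigma_\zeta^i\bar f)^{v_i}=\prod_i(-q\zeta^i z)^{v_i}=(-q)^{\sum v_i}\zeta^{\sum i v_i}z^{\sum v_i}$ and matching $z$-degrees forces $\sum_i v_i=0$ (Lemma~\ref{lemma1}); the multiplier is then the root of unity $\zeta^{\sum i v_i}$, and since $|q|>1$ Lemma~\ref{lemma2} forces $\zeta^{\sum i v_i}=1$, i.e. $\sum_i i\,v_i\equiv 0\pmod t$. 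Now $m_i^{\,d}$ and $m_j^{\,d}$ agree modulo $\Lambda$ iff the exponent vector $v$ with $v_i=d$, $v_j=-d$ lies in $\Lambda$, i.e. iff $d(i-j)\equiv 0\pmod t$; as $t$ is prime and $1\le d\le t-1$ we have $\gcd(d,t)=1$, so this fails for $i\ne j$. Hence the $t$ monomials represent $t$ distinct cosets of $\Z^t/\Lambda$ and are $C$-independent.

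The main obstacle I anticipate is making this last independence rigorous over the non-field base $C$: one must know that distinct cosets give genuinely $C$-independent elements of $C\{G\}$. I would settle this by observing that $\Lambda$ is stable under the cyclic shift induced by $\sigma_\zeta$ (immediate from the two congruences defining $\Lambda$), so that each idempotent component of $C\{G\}$ carries the same relation lattice and $C\{G\}\cong C[\Z^t/\Lambda]$ is $C$-free on the cosets. It is worth recording that primality is both essential and sharp: for composite $t$ some $d$ with $1\le d\le t-1$ has $\gcd(d,t)>1$ and the monomials collide, producing precisely the nontrivial pure-power relations of Section~\ref{sec:431}; and the excluded exponent $d=t$ would give $\gcd(t,t)=t$, collapsing all $m_i^{\,t}$ to one coset and yielding $\theta_q(z)^t/\theta_q(\zeta z)^t\in\kk(z)$.
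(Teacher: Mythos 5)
Your opening moves (transporting \eqref{eq:Ben} into $R$, embedding $R$ into $L\otimes_KR$ by $\Sigma$-simplicity, and splitting into homogeneous pieces via Proposition~\ref{prop:homo}) coincide with the paper's proof and are correct. The gap is exactly at the step you flag as the heart of the matter: the identification $C\{G\}\cong C[\Z^t/\Lambda]$, where $\Lambda$ is the lattice of \emph{global} characters, i.e.\ those $v\in\Z^t$ with $\prod_i\big(\sigma_\zeta^i\bar f\big)^{v_i}\in K$. What actually presents $C\{G\}$ is componentwise data: if $e_0,\dots,e_{t-1}$ are the idempotents of $C$, then each $e_jC\{G\}$ is a group algebra $K\big[\Z^t/\Lambda_j\big]$, the $\Lambda_j$ are cyclic shifts of a single component lattice $\Lambda_0$, and one only knows $\Lambda\subseteq\Lambda_0$ (indeed $\Lambda$ is the intersection of all shifts of $\Lambda_0$). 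Your Lemma~\ref{lemma1}/\ref{lemma2} computation constrains $\Lambda$, but the independence you need --- that $d$-th powers of distinct coordinate classes stay distinct in \emph{every} component --- is a statement about $\Lambda_0$. The bridge you assert, that $G$ is cut out by \emph{pure} characters $\prod_i(\sigma_\zeta^ix)^{v_i}$ (equivalently, that $\Lambda_0$ is shift-stable and equals $\Lambda$), is not supplied by Example~\ref{ex:subgroupsofGm}: the multiplicative $\varphi$ there has, in general, \emph{different} monomials attached to different idempotents. Nor is it a general fact about $\Sigma_1$-subgroups of ${\Gm}_{,\Sigma_1}$: for $t=2$ the set $G'=\{(c,c^2):c\in K^*\}\subset (K^*)^2$ is a closed $\Sigma_1$-subgroup, whose defining $\Sigma_1$-ideal is generated by $e_0\big(\sigma_\zeta x-x^2\big)$; its component lattice is $\Z\cdot(2,-1)$, which is not shift-stable, and its global lattice is $0$. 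Writing $m_i$ for the class of $\sigma_\zeta^ix$, the monomials $m_0^2$ and $m_1$ lie in distinct cosets modulo the global lattice, and yet $e_0\big(m_0^2-m_1\big)=0$ is a nontrivial $C$-linear relation in $C\{G'\}$. So ``distinct cosets mod $\Lambda$ are $C$-independent'' is precisely the kind of claim that fails when the constants have zero divisors, and your relation $\sum_i\mu_i m_i^{\,d}=0$ could a priori be of this idempotent-supported type.

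The gap is reparable with the paper's own tools, but it requires an argument, not the (trivially true) shift-stability of the congruence-defined lattice. By Example~\ref{ex:subgroupsofGm}, the generators of $e_0J$ are monomial equations $e_0(x^v-1)$; each such $v$ yields, exactly as in the proofs of Theorem~\ref{thm:app} and Proposition~\ref{prop:homo} (multiplicativity, $\varphi(\bar f)\in L^G=K$ via Proposition~\ref{prop:LGK}, descent of $b$ to a field of rational functions, projection $\gamma_e$), an identity $\prod_i\sigma_\zeta^i(-qz)^{v_i}=\sigma_q(b)/b$. Lemma~\ref{lemma1} then gives $\sum_iv_i=0$ --- this is literally how the paper proves Proposition~\ref{prop:homo} --- and Lemma~\ref{lemma2} further gives $\zeta^{\sum_i iv_i}=1$, i.e.\ $\sum_i iv_i\equiv0\pmod t$. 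Once both congruences are known for $\Lambda_0$ itself, $\Lambda_0$ is shift-stable, all components carry the same lattice, and your coset/primality argument goes through verbatim. It is worth seeing how the paper's proof sidesteps all of this: after the grading step it takes a shortest pure-degree-$d$ relation normalized to have leading coefficient $1$, applies $\sigma_q$, divides by $(-qz)^d$ and subtracts; minimality forces $\sigma_q(\lambda_r)=\zeta^{-rd}\lambda_r$ for the first nonzero coefficient $\lambda_r\in\kk(z)$, and Lemma~\ref{lemma2} together with primality of $t$ (so $rd\not\equiv0\bmod t$, hence $\zeta^{-rd}\neq1$) gives a contradiction, reducedness of $L$ finishing the proof --- no structure theory of $C\{G\}$ is needed there.
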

\begin{proof}
Let the pseudofield $K$ be as above, $R$ be a Picard--Vessiot ring over $K$ for
the equation $\sigma_q(y)=-qz\cdot y$, and
 $L$ be the corresponding Picard--Vessiot pseudofield for $R$.
 It follows  from Proposition~\ref{prop:homo} that $D=L\otimes_K R$ is
a graded ring such that the image of  $\theta_q$ in $D$ is
homogeneous of degree $1$. Suppose now that $\theta_q$ satisfies an
equation of the form~\eqref{eq:Ben}. Then, the same equation holds
in $R$ and, after embedding $R$ into $D$, it holds in $D$. Since $D$
is graded, our equation is homogeneous. Thus, it is of the form
$$
\lambda_{0}\cdot\theta_q{(z)}^d+\lambda_{1}\cdot\theta_q{\left(\zeta
z\right)}^d+\ldots+\lambda_{t-1}\cdot\theta_q{\left(\zeta^{t-1}z\right)}^d=0
$$
for some $d$. Consider the shortest equation and rewrite it as
follows
$$
\theta_q{(z)}^d+\lambda_{r}\cdot\theta_q{\left(\zeta^r z\right)}^d+\ldots+\lambda_{t-1
}\cdot\theta_q{\left(\zeta^{t-1}z\right)}^d=0,
$$
where $\lambda_{r}\cdot\theta_q{\left(\zeta^r z\right)}^d$ is the first nonzero summand
immediately following $\theta_q(z)^d$. Applying $\sigma_q$ and dividing by $(-qz)^d$,
we obtain
$$
\theta_q{(z)}^d+\sigma_q(\lambda_{r})\cdot{(\zeta^r)}^d\cdot\theta_q\left(\zeta^r
z\right)^d+\ldots+\sigma_q(\lambda_{t-1
})\cdot\left(\zeta^{t-1}\right)^d\cdot\theta_q{\left(\zeta^{t-1}z\right)}^d=0
$$
Therefore, $$\sigma_q(\lambda_r)=\zeta^{-rd}\cdot\lambda_r.$$ Now, it follows from
Lemma~\ref{lemma2}  that $\zeta^{-rd}=1$, contradiction. Thus, $\theta_q{(z)}^d=0$ must hold, but Picard--Vessiot
pseudofield is reduced, which is a contradiction again.
\end{proof}


\subsubsection{Difference-algebraic independence for $\theta_q$ over $\C(z)$}

We will now show difference-algebraic independence for $\theta_q$ over $\C(z)$.

\begin{example}\label{ex:thetarational} Consider an equation
$$
F(\theta_q)=\sum\nolimits_{(n_1,\ldots,n_p)\in\Z^p}
g_{n_1,\ldots,n_p}(z)\cdot \theta_q(\alpha_1
z)^{n_1}\cdot\ldots\cdot\theta_q(\alpha_p z)^{n_p}=0,
$$
where $g_{n_1,\ldots,n_p}\in\C(z)$ and $1\neq\alpha_i\neq\alpha_j
$ in $\C^*/q^{\Z}$. We will show that all
$g_{n_1,\ldots,n_p}$ are equal to zero. Since the sum is finite, there exists a monomial
$$
M(\theta_q)=\theta_q(\alpha_1 z)^{d_1}\cdot \ldots\cdot\theta_q(\alpha_p
z)^{d_p}
$$
such that $M(\theta_q)\cdot F(\theta_q)$ contains monomials with negative
powers. Now, we will calculate the poles of a given monomial with
negative powers. The poles of the $i$-th factor of the monomial
$$
M(\theta_q)=\frac{1}{\theta_q(\alpha_1
z)^{n_1}}\cdot\ldots\cdot\frac{1}{\theta_q(\alpha_p z)^{n_p}}.
$$
 are $\alpha_i^{-1}q^{r}$ for all $r\in
\Z$ and the multiplicity of each of the poles is $n_i$. The poles of
distinct factors are distinct. Indeed, suppose that
$$\alpha_i^{-1}\cdot q^{r_1}=\alpha_j^{-1}\cdot q^{r_2}.$$ Then,
$\alpha_j=\alpha_i\cdot q^{r_2-r_1}$. Therefore, $\alpha_i=\alpha_j$ in
$\C^*/q^{\mathbb Z}$, which is a contradiction. Thus, the set of all poles
of the monomial $M(\theta_q)$ is $\alpha_i^{-1}\cdot q^r$ with
multiplicity $n_i$.

Every function $g\in \C(z)$ has only finitely many poles and
zeros, so, all of them are inside of a disk $U_d=\{z\in \C\mid
|z|<d\}$. So, the set of all poles for $M(\theta_q)$ and $g\cdot
M(\theta_q)$ coincides in $\C\setminus U_d$ for some $d$. There exists a disk $U_d$ such that this property holds for all summands
in $F$. We can rewrite $F$ as follows:
\begin{align*}
F(\theta_q)&=\sum\nolimits_{n_1} \left(\sum\nolimits_{n_2,\ldots,n_p}
g_{n_1,\ldots,n_p}\cdot\theta_q(\alpha_1 z)^{n_1}\cdot\ldots\cdot\theta_q(\alpha_p
z)^{n_p}\right)=\\
&=\sum\nolimits_{n_1}F_{n_1}(\theta_q)=0.
\end{align*}
The point $\alpha_1^{-1}q^{r_1}$ (where $r_1$ is large enough positive
if $q>1$ and large enough negative if $q<1$) is a pole for all
summands $F_{n_i}$ and the multiplicity of this pole is different for
different $n_i$. To annihilate these poles,
$F_{n_1}=0$ must hold for all $n_1$. Repeating the same argument for all
$n_i$, we arrive at
$$
g_{n_1,\ldots,n_p}(z)\cdot
\theta_q(\alpha_1 z)^{n_1}\cdot\ldots\cdot\theta_q(\alpha_p z)^{n_p}=0
$$
for each $n_1,\ldots,n_p$. Therefore, $g_{n_1,\ldots,n_p}=0$.

It follows from this result that for an arbitrary root of unity
$\zeta$ the function $\theta_q$ is $\sigma_\zeta$-algebraically
independent over $\mathbb C(z)$ in the field of meromorphic
functions on $\mathbb C^*$.
However,  to generalize this result to finitely many roots of
unity, we need to require the following: $$\text{for all $i$ and $j$}\quad \zeta_i^k=\zeta_j^m\quad \text{implies}\quad
\zeta_i^k=\zeta_j^m=1.$$ Otherwise, the result is not true. Indeed,
if $\zeta_i^k=\zeta_j^m\neq1$, then the relation
$$\sigma_{\zeta_i}^k(\theta_q)-\sigma_{\zeta_j}^m(\theta_q)=0$$
is non-trivial. Indeed, note that  the difference
indeterminates $\sigma_{\zeta_i}^kx$ and $\sigma_{\zeta_j}^mx$ are distinct even in the
difference polynomial ring $\Q\{x\}_{\Sigma_1}$ in spite of the
fact that they define the same automorphisms of meromorphic
functions.
\end{example}

\subsection{General order one $q$-difference equations}\label{sec:generalqdifference}
We will start by discussing several examples of $\sigma_\zeta$-dependence and independence and finish by providing a general criterion
in Theorem~\ref{thm:6}. 
\begin{example} For $a(z) = \frac{z+1}{z-1}$, $t=2$, and $\zeta = -1$, we have
$$
\sigma_\zeta(a)(z)\cdot\sigma_\zeta^0(a)(z) = \frac{-z+1}{-z-1}\cdot\frac{z+1}{z-1} = 1 = \sigma_q(1)/1.
$$
Let $g$ be a meromorphic function on
$\C\setminus\{0\}$ such that $\sigma_q(g)=\frac{z+1}{z-1}\cdot g$. Then,
$g(z)\cdot g(-z)$ is $\sigma_q$-invariant:
$$
\sigma_q(g\cdot\sigma_\zeta(g))=\frac{z+1}{z-1}\cdot g\cdot\sigma_\zeta\left(\frac{z+1}{z-1}\cdot g\right)=\frac{z+1}{z-1}\cdot\frac{-z+1}{-z-1}\cdot g\cdot \sigma_\zeta(g)=g\cdot \sigma_\zeta(g).
$$
So, the function $g$ is $\sigma_\zeta$-algebraically dependent over
$\kk$.
\end{example}

\begin{example}
For $a(z) = 2^z$ and $t=4$ with $\zeta=i$, we have
$$
\sigma_\zeta^2(a)(z)\cdot\sigma_\zeta^0(a)(z) = 2^{-z}\cdot2^z=1=\sigma_q(1)/1.
$$
As before, let $g$ be a meromorphic function on
$\C\setminus\{0\}$ such that $\sigma_q(g)=2^z\cdot g$. Then, $g(z)\cdot g(-z)$ is
$\sigma_q$-invariant. Indeed,
$$
\sigma_q(g\cdot\sigma_\zeta^2(g))=2^z\cdot g\cdot\sigma_\zeta^2\left(2^z\cdot g\right)=2^z\cdot
2^{-z}\cdot g\cdot\sigma_\zeta^2(g)=g\cdot\sigma_\zeta^2(g).
$$
So, the function $g$ is $\sigma_\zeta$-algebraically dependent over
$\kk$.
\end{example}

\subsubsection{General characterization of periodic difference-algebraic independence}
Let $a\in \C(z)$ and $q,\zeta\in \C^*$ be such that
$|q|>1$ and $\zeta$ is a primitive root of unity of order $t$.
Then, $a$ can
be represented as follows
$$
a = \lambda\cdot z^T\cdot
\prod_{k=0}^{t-1}\prod_{d=-N-1}^{N}\prod_{i=1}^{R}\left( z - \zeta^k\cdot
q^d \cdot r_i \right)^{s_{k,d,i} },
$$
where $\lambda, r_i\in \mathbb C^*$ and the $r_i$'s are distinct in
$\mathbb C^*\big/\zeta^{\mathbb Z}\cdot q^{\mathbb Z}$. Let
$$
a_{i,k} = \sum_{d=-N-1}^{N} s_{k,d,i},\ 
d_{k,i} = \sum_{j=0}^{t-1} \zeta^{k\cdot j}\cdot a_{i,j}
,\ \text{and}\ 
D=
\begin{pmatrix}
d_{0,1}&d_{0,2}&\ldots&d_{0,R}\\
d_{1,1}&d_{1,2}&\ldots&d_{1,R}\\
\vdots&\vdots&\ddots&\vdots\\
d_{t-1,1}&d_{t-1,2}&\ldots&d_{t-1,R}\\
\end{pmatrix}.
$$
The following result combined with Theorem~\ref{thm:app} provides  a complete characterization of all equations~\eqref{eq:exdiffeq} whose solutions are $\sigma_\zeta$-algebraically independent.

\begin{theorem}\label{thm:6}
Let $a\in \C(z)$ and $D$
be as above. Then,
\begin{enumerate}
\item If $T=0$ and, either $\lambda^\mathbb Z\cap q^\mathbb Z\neq1$ or $\lambda$ is a root of unity, then
there exist  $b\in \C(z)$ and a
multiplicative function
$$
\varphi(x)=x^{n_0}\cdot \left(\sigma_\zeta
x\right)^{n_1}\cdot\ldots\cdot(\sigma_\zeta^{t-1} x)^{n_{t-1}}
$$
such that $\varphi(a)=\sigma_q(b)/b$ if and only if the matrix $D$
contains a zero row.
\item If either $T\neq0$ or, $\lambda^\mathbb Z\cap q^\mathbb Z=1$ and $\lambda$ is not a root of unity, then
there exist  $b\in \mathbb C(z)$ and a
multiplicative function
$$
\varphi(x)=x^{n_0}\cdot \left(\sigma_\zeta
x\right)^{n_1}\cdot\ldots\cdot (\sigma_\zeta^{t-1} x)^{n_{t-1}}
$$
such that $\varphi(a)=\sigma_q(b)/b$ if and only if  $D$
contains a zero row other than the first one.
\end{enumerate}
\end{theorem}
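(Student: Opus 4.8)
The plan is to reduce everything to understanding the subgroup $B:=\{\sigma_q(b)/b : 0\ne b\in\C(z)\}$ of $\C(z)^\ast$ and then to decide, in each case, whether some nontrivial $\varphi$ sends $a$ into $B$. Writing $\varphi(a)=\prod_{j=0}^{t-1}\sigma_\zeta^j(a)^{n_j}$, the assertion is exactly about the existence of a nonzero integer vector $(n_0,\ldots,n_{t-1})$ with $\varphi(a)\in B$. First I would record a clean membership criterion for $B$: a rational function $g$ lies in $B$ if and only if (i) $\mathrm{ord}_0(g)=0$; (ii) along every $q$-orbit $\{\gamma q^d:d\in\Z\}$ in $\C^\ast$ the orders of $g$ sum to zero; and (iii) the leading coefficient $g(0)$, well defined by (i), lies in $q^{\mathbb Z}$. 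The forward implications follow from $\mathrm{ord}_\gamma(\sigma_q b/b)=\mathrm{ord}_{q\gamma}(b)-\mathrm{ord}_\gamma(b)$ together with the short computation $(\sigma_q b/b)(0)=q^{\mathrm{ord}_0 b}$; the converse is obtained by solving, along each orbit, the telescoping system $\mathrm{ord}_{q\gamma}(b)-\mathrm{ord}_\gamma(b)=\mathrm{ord}_\gamma(g)$ with finite support (possible precisely because of (ii)) and then fixing the constant by a factor $z^e$ using (iii).

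Next I would translate (i)--(iii) for $g=\varphi(a)$ into conditions on $(n_j)$. Since $\mathrm{ord}_0\sigma_\zeta^j(a)=T$, condition (i) reads $T\sum_j n_j=0$. Because the $r_i$ are distinct modulo $\zeta^{\mathbb Z}q^{\mathbb Z}$ and $|q|>1$ forces $\zeta^a=q^b$ only when $t\mid a$ and $b=0$, the $q$-orbits meeting the support are indexed by pairs $(k,i)$, and the order of $\varphi(a)$ at $\zeta^{k}q^d r_i$ equals $\sum_j n_j s_{k+j,d,i}$; summing over $d$ turns (ii) into $\sum_j n_j a_{i,k+j}=0$ for all $k,i$. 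Applying the finite Fourier transform over $\Z/t\Z$ with character $\zeta$, and using $d_{m,i}=\sum_l \zeta^{ml}a_{i,l}$, this cyclic-correlation condition becomes $\widehat n_{-m}\,d_{m,i}=0$ for all $m,i$, where $\widehat n_m=\sum_j n_j\zeta^{mj}$. Thus $\widehat n$ is supported on those $m$ with $d_{-m,i}=0$ for all $i$, and since $d_{-m,i}=\overline{d_{m,i}}$ this is the symmetric set of indices of the zero rows of $D$, which therefore carries genuine integer vectors. In particular a nonzero $(n_j)$ satisfying (ii) exists if and only if $D$ has a zero row, and the case $m=0$ records the crucial fact that $\widehat n_0=\sum_j n_j\ne 0$ forces the first row of $D$ to vanish. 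Finally a direct computation gives $c_0(\varphi(a))=(\lambda P)^{\widehat n_0}\zeta^{T\sum_j jn_j}$, where $P=\prod_{k,d,i}(-\zeta^k q^d r_i)^{s_{k,d,i}}$ decomposes as a root of unity times a power of $q$ times $\prod_i r_i^{d_{0,i}}$.

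With these dictionaries in hand I would settle the two cases. When $\widehat n_0=0$, condition (iii) holds automatically, so the only obstruction is (ii) together with (i); this is the mechanism that forces a zero row other than the first. The subtle point is $\widehat n_0\ne 0$: then the first row of $D$ must already vanish, which kills the $\prod_i r_i^{d_{0,i}}$ factor, and (iii) reduces to $\lambda^{\widehat n_0}\omega\in q^{\mathbb Z}$ for a root of unity $\omega$. In Case 1, where $T=0$ and either $\lambda$ is a root of unity or $\lambda^{\mathbb Z}\cap q^{\mathbb Z}\neq 1$, this relation can be forced by a suitable nonzero $\widehat n_0$ (take $(n_j)$ constant and scale), so a zero first row already suffices and the criterion is that $D$ has a zero row. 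In Case 2, where either $T\ne0$, so (i) kills $\widehat n_0$, or $\lambda$ is not a root of unity with $\lambda^{\mathbb Z}\cap q^{\mathbb Z}=1$, clearing $\omega$ yields $\lambda^{N\widehat n_0}=q^{Nc}$, which is incompatible with $\widehat n_0\ne0$; hence $\widehat n_0$ must vanish, a zero first row is useless, and the criterion becomes that $D$ has a zero row other than the first. The main obstacle is exactly this last step: nailing down condition (iii) and showing that the multiplicative arithmetic of $\lambda$, $q$, and the roots of unity produces precisely the dichotomy between ``any zero row'' and ``a zero row other than the first,'' while checking that the $r_i$-dependence of the leading coefficient drops out exactly when it must.
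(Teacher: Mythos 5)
Your overall strategy is sound and, in substance, it is the paper's own computation repackaged: your membership criterion for $B=\{\sigma_q(b)/b\}$ (zero order at the origin, telescoping orbit sums, leading value in $q^{\mathbb Z}$) is exactly what the paper obtains by writing $b$ with undetermined exponents $l_{k,d,i}$ and eliminating them, and your Fourier identity $\widehat n_{-m}\,d_{m,i}=0$ is the paper's diagonalization $E_+\cdot A_i=D_i\cdot E_-$ of the circulant systems. All of your dictionaries check out: the order of $\varphi(a)$ at $\zeta^kq^dr_i$ is indeed $\sum_j n_js_{k+j,d,i}$, the correlation conditions are $\sum_j n_j a_{i,k+j}=0$, and the leading-coefficient formula $c_0(\varphi(a))=(\lambda P)^{\widehat n_0}\zeta^{T\sum_j jn_j}$ with $P$ a root of unity times a power of $q$ times $\prod_i r_i^{d_{0,i}}$ is correct, as is your Case 2 argument that $\widehat n_0\ne 0$ is impossible there.

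There is, however, one genuine gap: the claim ``when $\widehat n_0=0$, condition (iii) holds automatically.'' It does not. By your own formula, $\widehat n_0=0$ gives $c_0(\varphi(a))=\zeta^{T\sum_j jn_j}$, and since $|q|>1$ this lies in $q^{\mathbb Z}$ only if it equals $1$, i.e.\ only if $t$ divides $T\sum_j jn_j$ --- which can fail precisely when $T\ne 0$, the very situation in which you invoke the claim to settle Case 2. Concretely, take $t=2$, $\zeta=-1$, $a(z)=z(z-r)(z+r)$: then $T=1$, $d_{0,1}=2$, $d_{1,1}=0$, so $D$ has a zero row other than the first, but your construction (support $\widehat n$ on the zero row) produces $n=(1,-1)$, i.e.\ $\varphi(a)=a(z)/a(-z)=-1$, which is not of the form $\sigma_q(b)/b$ for any rational $b$; condition (iii) fails for this $n$. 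The theorem survives because one may replace $n$ by $tn$, which kills the residual root of unity ($\zeta^{tT\sum_j jn_j}=1$) while preserving (i), (ii) and the nontriviality of $\varphi$. This is exactly the scaling trick the paper applies once and for all before its case analysis (``if $(l_{k,d,i},n_r,M)$ is a solution, so is $(t\cdot l_{k,d,i},t\cdot n_r,t\cdot M)$,'' which strips the factor $\zeta^{(T+\sum s)(\sum_r rn_r)}$ from the constant equation); you use the same device for $\lambda$ and for the root of unity $\omega$ elsewhere, but omit it at the one point where it is indispensable. With that one-line repair your proof is complete and equivalent to the paper's.
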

\begin{proof}
We will write $\varphi$ and $b$ with undetermined coefficients and
exponents. Suppose that
$$
b=\mu\cdot z^M\cdot
\prod_{k=0}^{t-1}\prod_{d=-N}^{N}\prod_{i=1}^{R}\left( z - \zeta^k\cdot
q^d\cdot r_i \right)^{l_{k, d, i}}
\ 
\text{and}\ 
\varphi(x)=x^{n_0}\cdot \left(\sigma_\zeta x\right)^{n_1}\cdot\ldots\cdot
(\sigma_\zeta^{t-1} x)^{n_{t-1}}
$$
are such that  $\varphi(a)=\sigma_q(b)/b$. Let us
calculate the right and left-hand sides of this equality. We
see that
$$
\sigma_q(b)=\mu \cdot q^{M+\sum_{k, d, i} l_{k, d, i}} \cdot z^M\cdot
\prod_{k=0}^{t-1} \prod_{d=-N}^{N}\prod_{i=1}^{R} \left( z - \zeta^k\cdot
q^{d - 1}\cdot r_i \right)^{l_{k, d, i}}.
$$
Hence,
$$
\begin{aligned}
&\frac{\sigma_q(b)}{b}=q^{M+\sum_{k, d, i} l_{k, d, i}} \cdot
\prod_{k=0}^{t-1} \prod_{d=-N-1}^{N-1}\prod_{i=1}^{R} \left( z -
\zeta^k\cdot
q^{d}\cdot r_i \right)^{l_{k, d+1, i}}\cdot\\
&\qquad\qquad\qquad\cdot
\prod_{k=0}^{t-1} \prod_{d=-N}^{N}\prod_{i=1}^{R} \left( z -
\zeta^k\cdot q^{d}\cdot r_i \right)^{-l_{k, d, i}}=\\
&=q^{M+\sum_{k, d, i} l_{k, d, i}} \cdot
\prod_{k=0}^{t-1}\prod_{i=1}^{R} \Bigl[\left( z - \zeta^k\cdot
q^{-N-1}\cdot r_i \right)^{l_{k, -N, i}}\cdot\\
&\qquad\qquad\qquad\qquad\qquad\cdot\prod_{d=-N}^{N-1} \left( z - \zeta^k\cdot q^{d}\cdot r_i \right)^{l_{k,
d+1, i}-l_{k, d, i}}\left( z - \zeta^k\cdot q^{N}\cdot r_i
\right)^{-l_{k, N, i}}\Bigr].
\end{aligned}
$$
Now, we calculate the left-hand side. We see that
$$
\begin{aligned}
\sigma_\zeta^r a &= \lambda\cdot\zeta^{rT+\sum_{k, d, i} r\cdot s_{k,
d, i}} \cdot z^T\cdot
\prod_{k=0}^{t-1}\prod_{d=-N-1}^{N}\prod_{i=1}^{R}\left( z -
\zeta^{k-r}\cdot q^d \cdot r_i \right)^{s_{k, d, i} }=\\
&=\lambda\cdot\zeta^{rT+\sum_{k, d, i} r\cdot s_{k, d, i}} \cdot
z^T\cdot \prod_{k=0}^{t-1}\prod_{d=-N-1}^{N}\prod_{i=1}^{R}\left( z -
\zeta^{k}\cdot q^d \cdot r_i \right)^{s_{k+r, d, i} }.
\end{aligned}
$$
Hence,
$$
\begin{aligned}
\varphi(a)&=\lambda^{\sum_{r=0}^{t-1} n_r}\cdot\zeta^{\left(T+\sum_{k,
d, i} s_{k, d, i}\right)\cdot\left( \sum_{r=0}^{t-1}r\cdot n_r \right)} \cdot
z^{T\cdot\left(\sum_{k=0}^{t-1} n_r\right)}\cdot\\
&\qquad\cdot
\prod_{k=0}^{t-1}\prod_{d=-N-1}^{N}\prod_{i=1}^{R}\left( z -
\zeta^{k}\cdot q^d \cdot r_i \right)^{\sum_{r=0}^{t-1} n_r s_{k+r, d, i}
}.
\end{aligned}
$$
Now, the equation $\varphi(a)=\sigma_q(b)/b$ gives the following
system of equations
$$
\left\{
\begin{aligned}
&\left\{
\begin{aligned}
\sum_{r=0}^{t-1} s_{k+r, -N-1, i}\cdot n_r &= l_{k, -N, i}\\
\sum_{r=0}^{t-1} s_{k+r, d, i}\cdot n_r &= l_{k, d+1, i } - l_{k, d, i},\quad \quad{-N\leqslant d\leqslant N-1}\\
\sum_{r=0}^{t-1} s_{k+r, N, i} \cdot n_r &= -l_{k, N, i}\\
\end{aligned}\right.\\
&\lambda^{\sum_{k=0}^{t-1}n_r}\cdot\zeta^{\left(T+\sum_{k, d, i} s_{k, d, i}\right)\cdot\left(\sum_{r=0}^{t-1} r\cdot n_r\right)} = q^{M+\sum_{k, d, i} l_{k, d, i}}\\
&T\cdot\sum_{r=0}^{t-1} n_r=0
\end{aligned}\right.
$$
In this system, the unknown variables are $l_{k, d, i}$, $n_r$, and $M$. If
$l_{k, d, i}, n_r,  M$ is a solution of the system such that not
all $n_r$'s are zeroes, then $t\cdot l_{k, d, i}, t\cdot n_r, t\cdot M$ is a solution
with the same property. Therefore, we can replace the second equation with
$$
\lambda^{\sum_{k=0}^{t-1}n_r}= q^{M+\sum_{k, d, i} l_{k, d, i}}.$$
The first subsystem can be rewritten as follows:
$$
\begin{pmatrix}
s_{k, -N-1, i}&s_{k+1, -N-1, i}&\ldots&s_{k-1, -N-1, i}\\
s_{k, -N, i}&s_{k+1, -N, i}&\ldots&s_{k-1, -N, i}\\
\vdots&\vdots&\ddots&\vdots\\
s_{k, N, i}&s_{k+1, N, i}&\ldots&s_{k-1, N, i}\\
\end{pmatrix}
\begin{pmatrix}
n_0\\
n_1\\
\vdots\\
n_{t-1}
\end{pmatrix}
=
\begin{pmatrix}
l_{k, -N, i}\\
l_{k, -N +1, i} - l_{k, -N, i}\\
\vdots\\
-l_{k, N, i}\\
\end{pmatrix}.
$$
This system has a solution in $l_{k, d, i}$ if and only if the sum
of all equations is zero. Thus, we can replace this system with the
following:
$$
n_0\cdot \sum_{d=-N-1}^{N} s_{k,d,i}+n_1\cdot \sum_{d=-N-1}^{N}s_{k+1,d,i}+\ldots+n_{t-1}\cdot\sum_{d=-N-1}^{N}s_{k-1, d, i}=0.
$$
Using the definition of the $a_{i,j}$'s, we obtain the following
system:
$$
\begin{pmatrix}
a_{i,0}&a_{i,1}&\ldots&a_{i,t-1}\\
a_{i,1}&a_{i,2}&\ldots&a_{i,0}\\
\vdots&\vdots&\ddots&\vdots\\
a_{i,t-1}&a_{i,0}&\ldots&a_{i,t-2}\\
\end{pmatrix}
\begin{pmatrix}
n_0\\
n_1\\
\vdots\\
n_{t-1}\\
\end{pmatrix}
=0.
$$
Thus, for some integers $\gamma_{k,d,i,j}$, we have:
$$
\left\{
\begin{aligned}
&\begin{pmatrix}
a_{i,0}&a_{i,1}&\ldots&a_{i,t-1}\\
a_{i,1}&a_{i,2}&\ldots&a_{i,0}\\
\vdots&\vdots&\ddots&\vdots\\
a_{i,t-1}&a_{i,0}&\ldots&a_{i,t-2}\\
\end{pmatrix}
\begin{pmatrix}
n_0\\
n_1\\
\vdots\\
n_{t-1}\\
\end{pmatrix}
=0\\
&\lambda^{\sum_{k=0}^{t-1}n_r} = q^{M+\sum_{k, d, i} l_{k, d, i}}\\
&T\cdot\sum_{r=0}^{t-1} n_r=0\\
&l_{k, d, i} = \sum_{r=0}^{t-1} \gamma_{k, d, i, r}\cdot n_r
\end{aligned}\right.
$$
Consider the {\bf first case}: $T=0$ and $\lambda^\mathbb Z \cap
q^\mathbb Z\neq 1$. Then, for some $u,v\in \mathbb Z\setminus \{0\}$ we have
$\lambda^u=q^v$. Hence, the second equation is equivalent to
$$
v\cdot\sum_{r=0}^{t-1} n_r = u\cdot \left(M +\sum\nolimits_{k, d, i} l_{k,
d, i}\right)
$$
Suppose that $n_r$ and $l_{k,d,i}$ form a solution of all
equations except for the second one, where not all $n_r$'s are zero. Then,
$$
u\cdot n_r,\quad u\cdot l_{k,d,i},\quad M= \sum_{r=0}^{t-1} (v\cdot
n_r)-\sum\nolimits_{k, d, i} (u\cdot l_{k, d, i})
$$
form a solution of the whole system. Thus, in this case, we may exclude the
second equation.
Now we will check the case $T=0$ and $\lambda^w=1$ for some $w\in
\mathbb Z\setminus\{0\}$. In this situation, if $n_r, l_{k,d,i}$ is a
solution of all equations except for the second one, then
$$
w\cdot n_r,\ \ w\cdot l_{k,d,i},\ \ M= - \sum\nolimits_{k,d,i} w\cdot l_{k,d,i}
$$
is a solution of the whole system.
Therefore, in this case, the existence of $\varphi$ and $b$ is
equivalent  to  
\begin{equation}\label{eq:firstcase}
\begin{pmatrix}
a_{i,0}&a_{i,1}&\ldots&a_{i,t-1}\\
a_{i,1}&a_{i,2}&\ldots&a_{i,0}\\
\vdots&\vdots&\ddots&\vdots\\
a_{i,t-1}&a_{i,0}&\ldots&a_{i,t-2}\\
\end{pmatrix}
\begin{pmatrix}
n_0\\
n_1\\
\vdots\\
n_{t-1}\\
\end{pmatrix}
=0
\end{equation}
having a nontrivial common solution.

Consider the {\bf second case}: $T\neq 0$ or
$\left(\lambda^\mathbb Z\cap q^\mathbb Z=1\ \text{and}\ \lambda\ \text{is not a root of unity}\right)$. If $T\neq 0$, then the third equation gives
$\sum_{r=0}^{t-1} n_r=0$, and if $\lambda^\mathbb Z\cap q^\mathbb
Z=1$ and $\lambda$ is not a root of unity, then the second equation
gives
$\sum_{r=0}^{t-1} n_r=0$. Therefore, in both cases, the second equation is of the
form
$$
M+\sum\nolimits_{k,d,i} l_{k,d,i}=0.$$
Again, if $n_r,~l_{k,d,i}$ form a solution of all equations except
the second one, where not all $n_r$'s are zeroes, then $$n_r,\ \ l_{k,d,i},\ \
M= - \sum\nolimits_{k,d,i} l_{k,d,i}$$ form a solution of the whole system with
the same property. Thus, in this case, we need to show the existence
of a nontrivial solution of the system
\begin{equation}\label{eq:secondcase}
\left\{
\begin{aligned}
&\begin{pmatrix}
a_{i,0}&a_{i,1}&\ldots&a_{i,t-1}\\
a_{i,1}&a_{i,2}&\ldots&a_{i,0}\\
\vdots&\vdots&\ddots&\vdots\\
a_{i,t-1}&a_{i,0}&\ldots&a_{i,t-2}\\
\end{pmatrix}
\begin{pmatrix}
n_0\\
n_1\\
\vdots\\
n_{t-1}\\
\end{pmatrix}
=0,\\
&
n_0 + n_1+\ldots +n_{t-1}
=0.
\end{aligned}\right.
\end{equation}
Since all the coefficients in~\eqref{eq:firstcase} and~\eqref{eq:secondcase}
are integers,  there is a nontrivial solution with integral
coefficients if and only if there is a nontrivial solution with
complex coefficients. Define
\begin{align*}
E_+&=
\begin{pmatrix}
1&1&1&\ldots&1\\
1&\zeta&\zeta^2&\ldots&\zeta^{t-1}\\
1&\zeta^2&\zeta^{2\cdot 2}&\ldots&\zeta^{2\cdot(t-1)}\\
\vdots&\vdots&\vdots&\ddots&\vdots\\
1&\zeta^{t-1}&\zeta^{(t-1)\cdot 2}&\ldots&\zeta^{(t-1)\cdot(t-1)}\\
\end{pmatrix},\\
E_-&=
\begin{pmatrix}
1&1&1&\ldots&1\\
1&\zeta^{-1}&\zeta^{-2}&\ldots&\zeta^{-(t-1)}\\
1&\zeta^{-2}&\zeta^{-2\cdot 2}&\ldots&\zeta^{-2\cdot(t-1)}\\
\vdots&\vdots&\vdots&\ddots&\vdots\\
1&\zeta^{-(t-1)}&\zeta^{-(t-1)\cdot 2}&\ldots&\zeta^{-(t-1)\cdot(t-1)}\\
\end{pmatrix},
\end{align*}
$$
A_{i}=
\begin{pmatrix}
a_{i, 0}&a_{i, 1}&\ldots&a_{i, t-1}\\
a_{i, 1}&a_{i, 2}&\ldots&a_{i, 0}\\
\vdots&\vdots&\ddots&\vdots\\
a_{i, t-1}&a_{i, 0}&\ldots&a_{i, t-2}\\
\end{pmatrix},
\quad
D_i=
\begin{pmatrix}
d_{0, i}&0&\ldots&0\\
0&d_{1, i}&\ldots&0\\
\vdots&\vdots&\ddots&\vdots\\
0&0&\ldots&d_{t-1, i}\\
\end{pmatrix}.
$$
A straightforward calculation shows that
$
E_+\cdot A_i = D_i\cdot E_-$.
Let $n$ be the vector with the coordinates $n_0,n_1,\ldots,n_{t-1}$.
Hence, in the {\bf first case},  the systems
$
E_+ \cdot A_i \cdot  n = D_i\cdot E_-\cdot n=0
$
have a nontrivial solution. This is equivalent to the condition that the
systems
$
D_i \cdot m = 0$
have a nontrivial solution, where $m = E_-\cdot n$. Since the $D_i$'s are
diagonal,  there is a common solution of all systems $D_i\cdot m
= 0$ if and only if the matrices $D_i$ have a zero in the same
place. In other words, there is an integer $i_0$ such that for all
$i$ we have $d_{i_0, i}=0$. The latter condition is equivalent to the
condition that there is a zero row in the matrix $D$.

Consider the {\bf second case}. Let $l=(1,1,\ldots, 1)$ with
$t$ coordinates. We must to show that the systems
$$
A_i \cdot n=0,\quad l\cdot n=0
$$
have a nontrivial solution. Multiplying by $E_+$, we have
$$
D_i \cdot E_- \cdot n=0,\quad l\cdot n=0.
$$
Let $p_1,\ldots, p_u$ be the positions of all zero rows in the matrix $D$.
And let $E_1,\ldots,E_u$ be the columns in $E_-^{-1}$ with the $p_i$'s as indices. Since the matrices $D_i$ are diagonal, every common solution of the systems $D_i \cdot E_-\cdot
n=0$ is of the form: $$n=W\cdot\mu,\quad W :=(E_1,\ldots,E_u),\quad \mu :=(\mu_1,\ldots,\mu_n)^T.$$
Then, the equation $l\cdot n = 0$ gives
$
l\cdot W\cdot \mu = 0
$
Now, we find a condition when $l\cdot E_i$ is zero. For this, note
that
$
(1,0,\ldots,0)\cdot E_-=(1,1,\ldots,1)$
and, therefore, $$
(1,1,\ldots,1)\cdot E_-^{-1}=(1,0,\ldots,0)
.$$
Hence, only the first column of the matrix $E_-^{-1}$ gives nonzero
elements in the vector $l\cdot W$. The system $l\cdot W \cdot \mu =
0$ has only the zero solution if and only if $W$ is just one column and
 $l\cdot W\neq 0$. Thus, this system has a nontrivial solution
if and only if $W$ contains a row of $E_-^{-1}$ other than the first
one. In other words, the elements $d_{k,i}$ are zeroes for some
$k\neq 0$ and all $i$, $1\Le i\Le R$. This is equivalent to the
condition that $D$ contains a zero row other than the first one.
\end{proof}

\begin{corollary}\label{cor:pairwise}
    In the situation of Theorem~\ref{thm:app},
    if the zeros and poles of $a\in\C(z)$ are pair-wise distinct modulo the group generated by $\zeta$ and $q$, then any non-zero solution $f$ of the equation
       $ \sigma_q(f)=af$
    is $\sigma_\zeta$-independent over $\kk(z)$.
\end{corollary}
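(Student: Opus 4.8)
The plan is to combine Theorem~\ref{thm:app} with the matrix criterion of Theorem~\ref{thm:6}. By Theorem~\ref{thm:app}, a nonzero solution $f$ of $\sigma_q(f)=af$ is $\sigma_\zeta$-algebraically \emph{dependent} over $\kk(z)$ precisely when there exist $0\ne b\in\C(z)$ and a nontrivial multiplicative $\varphi(x)=x^{n_0}(\sigma_\zeta x)^{n_1}\cdot\ldots\cdot(\sigma_\zeta^{t-1}x)^{n_{t-1}}$ with $\varphi(a)=\sigma_q(b)/b$. So it suffices to rule out the existence of such a pair $(\varphi,b)$. By Theorem~\ref{thm:6}, the existence of $(\varphi,b)$ is equivalent to the matrix $D$ having a zero row (in the first case) or a zero row other than the first one (in the second case); hence in either case it is enough to prove that, under our hypothesis, $D$ has no zero row at all, which will dispatch both cases simultaneously.

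Next I would translate the hypothesis into a statement about the exponents $s_{k,d,i}$ appearing in the normal form of $a$ that precedes Theorem~\ref{thm:6}. Every zero or pole of $a$ in $\C^*$ has the shape $\zeta^k q^d r_i$, and such a point reduces modulo the group $\langle\zeta,q\rangle$ to the coset of $r_i$; since the $r_i$ are distinct in $\C^*/(\zeta^{\Z}q^{\Z})$, all zeros and poles carrying the same index $i$ lie in one and the same coset. Therefore the assumption that the zeros and poles are pairwise distinct modulo $\langle\zeta,q\rangle$ forces, for each $i$, at most one pair $(k,d)$ with $s_{k,d,i}\ne0$; denote this unique exponent by $s_{k_i,d_i,i}=:s_i\ne0$.

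With this in hand the analysis of $D$ is immediate. From $a_{i,k}=\sum_d s_{k,d,i}$ we obtain $a_{i,j}=s_i\,\delta_{j,k_i}$, so that
\[
d_{k,i}=\sum_{j=0}^{t-1}\zeta^{kj}a_{i,j}=\zeta^{k k_i}\,s_i .
\]
Because $s_i\ne0$ and $\zeta^{kk_i}$ is a root of unity, every entry $d_{k,i}$ is nonzero; equivalently, each column of $D$ is the nonzero scalar $s_i$ times a column of the Fourier matrix $E_+$. In particular $D$ has no zero entry, hence no zero row, and \emph{a fortiori} no zero row other than the first. By the reduction above, neither case of Theorem~\ref{thm:6} can produce a pair $(\varphi,b)$, so no nontrivial relation $\varphi(a)=\sigma_q(b)/b$ exists and $f$ is $\sigma_\zeta$-independent over $\kk(z)$. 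The only genuinely delicate point is the translation step: one must notice that $\langle\zeta,q\rangle$ collapses each $r_i$-coset to a single class, so that pairwise distinctness of the zeros and poles is exactly the statement that each column of the exponent data contributes a single nonzero $s_i$; once this is seen, the vanishing of no entry of $D$ is a one-line Vandermonde observation.
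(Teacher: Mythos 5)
Your proof is correct and is exactly the argument the paper intends: the corollary is stated as an immediate consequence of Theorem~\ref{thm:6} combined with Theorem~\ref{thm:app}, and your translation (pairwise distinctness modulo $\langle\zeta,q\rangle$ forces a single nonzero exponent $s_{k_i,d_i,i}$ per index $i$, whence $d_{k,i}=\zeta^{kk_i}s_i\neq0$ and $D$ has no zero row) is precisely the intended reduction. Nothing is missing beyond the degenerate convention that each $r_i$ in the normal form actually occurs with some nonzero exponent, which is implicit in the paper's setup as well.
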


\begin{example} If $0 \ne c\in \C$, then any non-zero solution $f$ of  $\sigma_q(f) = (z-c)\cdot f$ is $\sigma_\zeta$-independent over $\kk(z)$.
\end{example}

\section*{Acknowledgments} We are grateful to  Yves Andr\'e, Henri Gillet, Sergey Gorchinskiy, Charlotte Hardouin, Manuel Kauers, Alexander Levin, Alice Medvedev, Eric Rosen, Jacques Sauloy, Michael Singer, Lucia di~Vizio,  Michael Wibmer, and the referee for their helpful comments. 

\section*{Funding}B.~Antieau was supported by the NSF  Grants DMS-0901373 and CCF-0901175. A.~Ovchinnikov was supported by the grants: NSF  CCF-0901175, 0964875, and 0952591 and  PSC-CUNY  No.~60001-40~41.

\setlength{\bibsep}{1.5pt}
\bibliographystyle{abbrvnat}\small
\bibliography{difference_difference_equations}

\end{document}